\def\th@plain{\slshape}\makeatother
\makeatletter\patchcmd{\th@remark}{\itshape}{\slshape}{}{}\makeatother
\newcounter{bidon}
\newcommand{\rdb}{\refstepcounter{bidon}}
\newcommand {\junk}[1]{}
\newcommand \noi {\noindent}
\renewcommand \leq{\leqslant}
\renewcommand \geq{\geqslant}
\newcommand \etl{^*}
\newcommand \etoz{$^*$}
\newcommand \divi {\mid}
\newcommand \nedivi {\not\kern 2.5pt\mid}
\newcommand{\pref}[1]{\textup{\hbox{\normalfont(\ref{#1})}}}
\newcommand \aqo[2] {#1\sur{\gen{#2}}\!}
\newcommand \gen[1] {\left\langle{#1}\right\rangle}
\newcommand \sur[1] {\!\left/#1\right.}
\renewcommand \leq{\leqslant}
\renewcommand \geq{\geqslant}
\newcommand \som {\sum\nolimits}
\newcommand\tsbf[1]{\textsf{\textbf{\textup{#1}}}}
\newcommand\Tsbf[1]{\hyperref[Ax#1]{\tsbf{#1}}}
\newcommand\Sa[1]{\hyperref[theorie#1]{\sa{#1}}}
\newcommand \eoe {\hbox{}\nobreak\hfill
\vrule width .5em height .5em depth 0mm \par \smallskip}
\newcommand \ov[1] {\overline{#1}}
\def\revddots{\mathinner{\mkern1mu\raise\p@
\vbox{\kern7\p@\hbox{.}}\mkern2mu
\raise4\p@\hbox{.}\mkern2mu\raise7\p@\hbox{.}\mkern1mu}}
\newcommand \CC{\mathbb {C}}
\newcommand \FF{\mathbb {F}}
\newcommand \NN{\mathbb {N}}
\newcommand \ZZ{\mathbb {Z}}
\newcommand \PP{\mathbb {P}}
\newcommand \QQ{\mathbb {Q}}
\newcommand \RR{\mathbb {R}}
\newcommand \Fp{{\FF_p}}
\newcommand \Q{\mathbb {Q}}
\newcommand \Z{\mathbb {Z}}
\newcommand \gk {\mathbf{k}}
\newcommand \gl {\mathbf{l}}
\newcommand \gA {\mathbf{A}}
\newcommand \gB {\mathbf{B}}
\newcommand \gC {\mathbf{C}}
\newcommand \gK {\mathbf{K}}
\newcommand \gKb {\ov\gK}
\newcommand \gL {\mathbf{L}}
\newcommand \gR {\mathbf{R}}
\newcommand \gV {\mathbf{V}}
\newcommand \gZ {\mathbf{Z}}
\newdimen\xyrowsp
\newcommand{\SCO}[6]{
\xymatrix @R = \xyrowsp {
                                  &1 \ar@{-}[dl] \ar@{-}[dr] \\
#3 \ar@{-}[ddr]                   &   & #6 \ar@{-}[ddl] \\
                                  &\bullet\ar@{-}[d] \\
                                  &\bullet   \\
#2 \ar@{-}[ddr] \ar@{-}[uur]      &   & #5 \ar@{-}[ddl] \ar@{-}[uul] \\
                                  &\bullet \ar@{-}[d] \\
                                  &\bullet  \\
#1 \ar@{-}[uur]                   &   & #4 \ar@{-}[uul] \\
                                  & 0 \ar@{-}[ul] \ar@{-}[ur] \\
}
}
\renewcommand \deg {\MA{\mathrm{deg}}}
\newcommand \pgcd {\MA{\mathrm{pgcd}}}
\newcommand \ppcm {\MA{\mathrm{ppcm}}}
\newcommand\MA[1]{\mathop{#1}\nolimits}
\newcommand \lcm {\mathop\mathrm{lcm}}
\newcommand \rG {\mathrm{G}}
\newcommand \vu {\vee} 
\newcommand \vi {\wedge} 
\newcommand \AY {\gA[Y]}
\newcommand \KX {\gK[X]}
\newcommand \KY {\gK[Y]}
\newcommand \QQy {\QQ[y]}
\newcommand \QQY {\QQ[Y]}
\newcommand \QX {\QQ[X]}
\newcommand \QY {\QQ[Y]}
\newcommand \Qn {\Q_n}
\newcommand \ZX {\ZZ[X]}
\newcommand \ZZY {\ZZ[Y]}
\newcommand \ZY {\ZZ[Y]}
\begin{document}
\selectlanguage{english}

\thispagestyle{empty}
~ 
\vspace{3cm}

\noindent In this file you find the English version starting on the page  numbered \pageref{beginenglish}.

\medskip \noindent  {\large \bf Cyclotomic polynomials without using the zeros of $Y^n-1$}

\bigskip \noindent  
Then the French version begins on the page numbered \pageref{beginfrench}.

\medskip\noindent   {\large \bf Polynômes cyclotomiques, sans utiliser les zéros des polynômes $Y^n-1$}

\smallskip \noindent Le lecteur ou la lectrice sera sans doute surprise de l'alternance des sexes ainsi que de l'orthographe du mot 'corolaire', avec d'autres innovations auxquelles elle n'est pas habituée. En fait, nous avons essayé de suivre au plus près les préconisations de l'orthographe nouvelle recommandée, telle qu'elle est enseignée aujourd'hui dans les écoles en France.  

\bigskip\noindent   {\large \bf Authors}  

\smallskip \noindent   Gema M. Diaz-Toca, Universidad de Murcia, Spain,
email: {\tt gemadiaz@um.es}
\\
Partially supported by the grant\\  PID2020-113192GB-I00/AEI/10.13039/501100011033 (Mathematical Visualization: Foundations, Algorithms and Applications) from the Spanish State Research Agency (Ministerio de Ciencia e Innovaci\'{o}n

\smallskip \noindent Henri Lombardi, Université Marie et Louis Pasteur, F-25030 Besançon Cedex, France, \\
email: {\tt henri.lombardi@univ-fcomte.fr}

\smallskip \noindent Claude Quitté, Laboratoire de Math\'ematiques,
SP2MI, Boulevard 3, Teleport 2, BP 179, F-86960 Futuroscope Cedex,
France,\\ 
 email: {\tt claude.quitte@math.univ-poitiers.fr}

\normalsize
\newpage
\thispagestyle{empty}

~

\pagestyle{headings}
\patchcmd{\sectionmark}{\MakeUppercase}{}{}{}
\setcounter{page}{0}
\renewcommand\thepage{E\arabic{page}}

\begingroup

\def\proofname{\textsl{Proof}}

\title{Cyclotomic polynomials without using the zeros of $Y^n-1$}
\author{Gema M. Diaz-Toca, Henri Lombardi, Claude Quitt\'e}

\maketitle



\theoremstyle{plain}
\newtheorem{theorem}{Theorem}[section]
\newtheorem{thdef}[theorem]{Theorem and definition}
\newtheorem{lemma}[theorem]{Lemma}
\newtheorem{corollary}[theorem]{Corollary}
\newtheorem{proposition}[theorem]{Proposition}
\newtheorem{propdef}[theorem]{Proposition and definition}
\newtheorem{plcc}[theorem]{Concrete local-global principle}
\newtheorem{fact}[theorem]{Fact}
\newtheorem{valsatz}[theorem]{\vst}

\newtheorem{theoremc}[theorem]{Theorem\etoz}
\newtheorem{lemmac}{Lemma\etoz}
\newtheorem{corollaryc}{Corollairy\etoz}
\newtheorem{propositionc}{Proposition\etoz}
\newtheorem{factc}{Fait\etoz}
\newtheorem*{Principleofcoveringbyquotients}{Principle of covering by quotients}

\theoremstyle{definition}
\newtheorem{conjecture}[theorem]{Conjecture}
\newtheorem{definition}[theorem]{Definition}
\newtheorem{definitions}[theorem]{Definitions}
\newtheorem{notation}[theorem]{Notation}
\newtheorem{definota}[theorem]{Definition and notation} 
\newtheorem{convention}[theorem]{Convention}
\newtheorem{problem}[theorem]{Problem}
\newtheorem{question}[theorem]{Question}

\theoremstyle{remark}
\newtheorem{remark}[theorem]{Remark}
\newtheorem{remarks}[theorem]{Remarks}
\newtheorem{comment}[theorem]{Comment}
\newtheorem{comments}[theorem]{Comments}
\newtheorem{example}[theorem]{Example}
\newtheorem{examples}[theorem]{Examples}


\newcommand\comm{\rdb
\noi{\it Comment. }}

\newcommand\COM[1]{\rdb
\noi{\it Comment #1. }}

\newcommand\comms{\rdb
\noi{\it Comments. }}

\newcommand\Pb{\rdb
\noi{\bf Problem. }}

\newcommand \rem{\rdb
\noi{\sl Remark. }}

\newcommand \REM[1]{\rdb
\noi{\sl Remark#1. }}

\newcommand \rems{\rdb
\noi{\sl Remarks. }}

\newcommand \exl{\rdb
\noi{\bf Example. }}

\newcommand \EXL[1]{\rdb
\noi{\bf Example: #1. }}

\newcommand \exls{\rdb
\noi{\bf Examples. }}

\newcommand\gui[1]{``{#1}''}

\newcommand \thref[1] {Theorem~\ref{#1}}
\newcommand \paref[1] {page~\pageref{#1}}
\newcommand \pstfref[1] {Positivstellensatz formel~\ref{#1}}
\newcommand \pstref[1] {Positivstellensatz~\ref{#1}}

\newcommand \num {{n$^{\mathrm{ o}}$}}

\newcommand\subsubsec[1] {\subsubsection*{#1}}

\newcommand \hdr {induction hypothesis\xspace}
\newcommand \ssi {if and only if\xspace}
\newcommand \cnes {necessary and sufficient condition\xspace}
\newcommand \spdg {without loss of generality\xspace}
\newcommand \Propeq {T.F.A.E.\xspace}
\newcommand \propeq {t.f.a.e.\xspace}
\newcommand \disept {17$^{th}$ Hilbert's problem\xspace}

\newcommand \cad {\textit{i.e.}\xspace}

\newcommand \Vrai {\mathsf{True}}
\newcommand \Faux {\mathsf{False}}


\newcommand \Amo {$\gA$-module\xspace}
\newcommand \Amos {$\gA$-modules\xspace}

\newcommand \Bmo {$\gB$-module\xspace}
\newcommand \Bmos {$\gB$-modules\xspace}

\newcommand \Zmo {$\gZ$-module\xspace}
\newcommand \Zmos {$\gZ$-modules\xspace}

\newcommand \ZZmo {$\ZZ$-module\xspace}
\newcommand \ZZmos {$\ZZ$-modules\xspace}

\newcommand \Ali {$\gA$-\ali}
\newcommand \Alis {$\gA$-\alis}

\newcommand \Alg {$\gA$-\alg}
\newcommand \Algs {$\gA$-\algs}

\newcommand \kev {$\gk$-vector space\xspace}
\newcommand \kevs {$\gk$-vector spaces\xspace}

\newcommand \Kev {$\gK$-vector space\xspace}
\newcommand \Kevs {$\gK$-vector spaces\xspace}

\newcommand \klg {$\gk$-\alg}
\newcommand \klgs {$\gk$-\algs}

\newcommand \Klg {$\gK$-\alg}
\newcommand \Klgs {$\gK$-\algs}

\newcommand \QQlg {$\QQ$-\alg}
\newcommand \QQlgs {$\QQ$-\algs}

\newcommand \ZZlg {$\QQ$-\alg}
\newcommand \ZZlgs {$\QQ$-\algs}

\newcommand \ac {algebraically closed\xspace}
\newcommand \alc {\agq closure\xspace}

\newcommand \adv {valuation domain\xspace}
\newcommand \advs {valuation domains\xspace}

\newcommand \arv {valuation ring\xspace}
\newcommand \arvs {valuation rings\xspace}

\newcommand \agq {algebraic\xspace}

\newcommand \alg {algebra\xspace}
\newcommand \algs {algebras\xspace}

\newcommand \agB {Boolean \alg}

\newcommand \algo{algorithm\xspace}
\newcommand \algos{algorithms\xspace}

\newcommand \algq{algorithmic\xspace}

\newcommand \ali {\lin map\xspace}
\newcommand \alis {\lin maps\xspace}

\newcommand \anar {\ari \ri}
\newcommand \anars {\ari \ris}
\newcommand \Anars {\Ari \ris}

\newcommand \ari{arith\-metic\xspace}

\newcommand \auto {automorphism\xspace}
\newcommand \autos {automorphisms\xspace}


\newcommand \cac {algebraically closed field\xspace}
\newcommand \cacs {algebraically closed fields\xspace}

\newcommand \cara{characteristic\xspace}  

\newcommand \carn{characterization\xspace}  
\newcommand \carns{characterizations\xspace}  

\newcommand \cdi{discrete field\xspace}  
\newcommand \cdis{discrete fields\xspace}  

\newcommand \cdf{fraction field\xspace}
\newcommand \cdfs{fraction fields\xspace}
 
\newcommand \cli{integral closure\xspace}  
\newcommand \clis{integral closures\xspace}  

\newcommand \codi {discrete ordered field\xspace}
\newcommand \codis {discrete ordered fields\xspace}

\newcommand \coe {coefficient\xspace}
\newcommand \coes {coefficients\xspace}

\newcommand \cof {\cov}

\newcommand \coh {coherent\xspace}

\newcommand \coli {linear combination\xspace}
\newcommand \colis {linear combinations\xspace}

\newcommand \com {comaximal\xspace}

\newcommand \coo {coordinate\xspace}
\newcommand \coos {coordinates\xspace}

\newcommand \cop {complementary\xspace}

\newcommand \corl {corollary\xspace}
\newcommand \corls {corollaries\xspace}

\newcommand \cosv {conservative\xspace}

\newcommand \cvd {valued discrete field\xspace}
\newcommand \cvds {valued discrete fields\xspace}

\newcommand \cvdsc {separably closed valued discrete field\xspace}
\newcommand \cvdscs {separably closed valued discrete fields\xspace}

\newcommand \cvdac {algebraicalle closed valued discrete field\xspace}
\newcommand \cvdacs {algebraicalle closed valued discrete fields\xspace}

\newcommand \cyct {cyclotomic\xspace}
\newcommand \cycts {cyclotomic\xspace}


\newcommand \dcd {residually discrete\xspace}

\newcommand \dcn {decomposition\xspace}
\newcommand \dcns {decompositions\xspace}

\newcommand \ddp {Pr\"ufer domain\xspace}
\newcommand \ddps {Pr\"ufer domains\xspace}

\newcommand \ddk {Krull dimension\xspace}

\newcommand \demo {proof\xspace}
\newcommand \dems {proofs\xspace}
\newcommand \demos {\dems}

\newcommand \dfn{definition\xspace}  
\newcommand \Dfn{Definition\xspace}  
\newcommand \Dfns{Definitions\xspace}  
\newcommand \dfns{definitions\xspace}  

\newcommand \dij{disjunctive\xspace}
\newcommand \wdij{weakly \dij}

\newcommand \discri{discriminant\xspace}
\newcommand \discris{discriminants\xspace}

\newcommand \dok {Dedekind domain\xspace}
\newcommand \doks {Dedekind domains\xspace}

\newcommand \dve {divisibility\xspace}

\newcommand \dvz {zerodivisor\xspace}
\newcommand \dvzs {zerodivisors\xspace}

\newcommand \eco{\com \elts}  

\newcommand \egmt{also\xspace} 

\newcommand \egt{equality\xspace} 
\newcommand \egts{equalities\xspace} 

\newcommand \elr{elementary\xspace}  

\newcommand \elt{element\xspace}  
\newcommand \elts{elements\xspace}  

\def \endo {endomorphism\xspace}
\def \endos {endomorphisms\xspace}

\newcommand \entrel {entailment relation\xspace}
\newcommand \entrels {entailment relations\xspace}

\newcommand \eqn  {equation\xspace}
\newcommand \eqns  {equations\xspace}

\newcommand \eqv  {equivalent\xspace}

\newcommand \eqvc  {equivalence\xspace}
\newcommand \eqvcs  {equivalences\xspace}

\newcommand \eseq{essentially equivalent\xspace} 
\newcommand \Eseq{Essentially equivalent\xspace} 

\newcommand \esid{essentially identical\xspace} 
\newcommand \Esid{Essentially identical\xspace} 

\newcommand \evc{vector space\xspace} 
\newcommand \evcs{vector spaces\xspace} 


\newcommand \fab {bounded \fcn}
\newcommand \fabs {bounded \fcns}

\newcommand \fac {total \fcn}

\newcommand \facile{\begin{proof}
Left to the reader.
\end{proof}}

\newcommand \fap {partial \fcn}
\newcommand \faps {partial \fcns}

\newcommand \fcn {factorization\xspace}
\newcommand \fcns {factorizations\xspace}

\newcommand \fdi{strongly discrete\xspace} 


\newcommand\gmq{geometric\xspace}

\newcommand\gne{generalised\xspace}

\newcommand\gnl{general\xspace}

\newcommand\gnlt{generally\xspace}

\newcommand\gnn{generalization\xspace}
\newcommand\gnns{generalizations\xspace}

\newcommand\gnq{generic\xspace}

\newcommand\grl{$\ell$-group\xspace}
\newcommand\grls{$\ell$-groups\xspace}

\newcommand \gtr{generator\xspace}  
\newcommand \gtrs{generators\xspace}  


\newcommand \homo {homomorphism\xspace}
\newcommand \homos {homomorphisms\xspace}

\newcommand \id {ideal\xspace}
\newcommand \ids {ideals\xspace}

\newcommand \idd {de\-ter\-mi\-nantal \id}
\newcommand \idds {de\-ter\-mi\-nantal \ids}

\newcommand \idema {maximal \id}
\newcommand \idemas {maximal \ids}

\newcommand \idep {prime \id}
\newcommand \ideps {prime \ids}

\newcommand \idemi {minimal prime\xspace}
\newcommand \idemis {minimal primes\xspace}

\newcommand \idf {Fitting \id}
\newcommand \idfs {Fitting \ids}

\newcommand \idm {idempotent\xspace}
\newcommand \idms {idempotents\xspace}

\newcommand \idp {principal \id}
\newcommand \idps {principal \ids}

\newcommand \idtr {indeterminate\xspace}
\newcommand \idtrs {indeterminates\xspace}

\newcommand \ifr {fractional \id}
\newcommand \ifrs {fractional \ids}

\newcommand \inteq {intuitively \eqv}

\newcommand \ird {irreducible\xspace}
\newcommand \irds {irreducible\xspace}

\newcommand \itf {\tf \id}
\newcommand \itfs {\tf \ids}

\newcommand \iso {isomorphism\xspace}
\newcommand \isos {isomorphisms\xspace}

\newcommand \iv {invertible\xspace}
\newcommand \ivs {invertible\xspace}

\newcommand \lec {reader\xspace}

\newcommand \lgb {local global\xspace}

\newcommand \lin {linear\xspace}

\newcommand \lon {localization\xspace}
\newcommand \lons {localizations\xspace}

\newcommand \lop {\lot principal\xspace}

\newcommand \losd {\lot \sdz\xspace}

\def \lot {locally\xspace}

\newcommand \mlp {principal \lon matrix\xspace}
\newcommand \mlps {principal \lon matrices\xspace}

\newcommand \mnp {manipulation\xspace}
\newcommand \mnps {manipulations\xspace}
\newcommand \mnr {\elr \mnp}
\newcommand \mnrs {\elr \mnps}

\newcommand \mo {monoid\xspace}
\newcommand \mos {monoids\xspace}
\newcommand \moco {\com \mos}

\newcommand \mpf {\pf module\xspace}
\newcommand \mpfs {\pf modules\xspace}

\newcommand \mpn {\pn matrix\xspace}
\newcommand \mpns {\pn matrices\xspace}

\newcommand \mpr {\pro module\xspace}
\newcommand \mprs {\pro modules\xspace}

\newcommand \mprn {\prn matrix\xspace}
\newcommand \mprns {\prn matrices\xspace}

\newcommand \mptf {\ptf module\xspace}
\newcommand \mptfs {\ptf modules\xspace}

\newcommand \mrc {projective module of constant rank\xspace}
\newcommand \mrcs {projective modules of constant rank\xspace}


\newcommand \ncr{necessary\xspace}

\newcommand \ncrt{necessarily\xspace}

\newcommand \ndz {regular\xspace}

\newcommand \noe {Noetherian\xspace}
\newcommand \noco {\noe\coh}

\newcommand \nst {Nullstellensatz\xspace}
\newcommand \nsts {Nullstellens\"atze\xspace}

\newcommand \odz {Zariski open set\xspace}

\newcommand \oqc {\qc open set\xspace}
\newcommand \oqcs {\qc open sets\xspace}

\newcommand \ort {orthogonal\xspace}


\newcommand \pa {saturated pair\xspace}
\newcommand \pas {saturated pairs\xspace}

\newcommand \pb{problem\xspace}  
\newcommand \pbs{problems\xspace}

\newcommand \peq {purely equational\xspace}

\newcommand \pf {finitely presented\xspace}

\newcommand \plg {\lgb principle\xspace}
\newcommand \plgs {\lgb principles\xspace}

\newcommand \plga {abstract \plg}
\newcommand \plgas {abstract \plgs}

\newcommand \Plgc {Concrete \plg}
\newcommand \plgc {concrete \plg}
\newcommand \plgcs {concrete \plgs}

\newcommand \pn {presentation\xspace}
\newcommand \pns {presentations\xspace}

\newcommand \pol {polynomial\xspace}
\newcommand \pols {polynomials\xspace}

\newcommand \polcar {characteristic \pol}
\newcommand \polcars {characteristic \pols}

\newcommand \polmin {minimal \pol}
\newcommand \polmins {minimal \pols}

\newcommand \prc {rank constant \pro}

\newcommand \prmt {precisely\xspace}
\newcommand \Prmt {Precisely\xspace}

\newcommand \prn {projection\xspace}
\newcommand \prns {projections\xspace}

\newcommand \pro {projective\xspace}

\newcommand \proi {potential prime\xspace}
\newcommand \prois {potential primes\xspace}

\newcommand \proc {potential chain\xspace}
\newcommand \procs {potential chains\xspace}

\newcommand \proel {elementary \proc}
\newcommand \proels {elementary \procs}
\newcommand \proelo {\proel of length }
\newcommand \proelos {\proels of length }

\newcommand \prolo {\proc of length }
\newcommand \prolos {\procs of length }

\newcommand \prt {property\xspace}
\newcommand \prts {properties\xspace}

\newcommand \pst {Positivstellensatz\xspace}
\newcommand \psts {Positivstellens\"atze\xspace}

\newcommand \ptf {\tf \pro}


\newcommand \qc {quasi-compact\xspace}

\newcommand \qiri {pp-ring\xspace}
\newcommand \qiris {pp-rings\xspace}

\newcommand \ralg {Horn rule\xspace}
\newcommand \ralgs {Horn rules\xspace}

\newcommand \rcf {real closed field\xspace}
\newcommand \rcfs {real closed fields\xspace}

\newcommand \rdl {linear dependance relation\xspace}
\newcommand \rdls {linear dependance relations\xspace}

\newcommand \rdi {integral dependance relation\xspace}
\newcommand \rdis {integral dependance relations\xspace}

\newcommand \rdij {\dij rule\xspace}
\newcommand \rdijs {\dij rules\xspace}

\newcommand \rdv {valuative divisibility relation\xspace}
\newcommand \rdvs {valuative divisibility relations\xspace}

\newcommand \rdy {dynamical rule\xspace}
\newcommand \rdys {dynamical rules\xspace}

\newcommand \recu {induction\xspace}

\newcommand \red {direct rule\xspace}
\newcommand \reds {direct rules\xspace}

\newcommand \rex {existential rule\xspace}
\newcommand \rexs {existential rules\xspace}

\newcommand \ri {ring\xspace}
\newcommand \ris {rings\xspace}


\newcommand \sad {dynamical algebraic structure\xspace}
\newcommand \sads {dynamical algebraic structures\xspace}
\newcommand \SAD {Dynamical algebraic structure\xspace}
\newcommand \SADs {Dynamical algebraic structures\xspace}

\newcommand \salg {algebraic structure\xspace}
\newcommand \salgs {algebraic structures\xspace}

\newcommand \sdz {without \dvz}

\newcommand \sfio {fundamental system of orthogonal idempotents\xspace}

\newcommand \sgr {\gtr set\xspace}
\newcommand \sgrs {\gtr sets\xspace}

\newcommand \sli {\lin \sys}
\newcommand \slis {\lin \syss}

\newcommand \spb {separable\xspace}
\newcommand \spl {separable\xspace}

\newcommand \sps {spectral space\xspace}
\newcommand \spss {spectral spaces\xspace}

\newcommand \sys {system\xspace}
\newcommand \syss {systems\xspace}

\newcommand \talg {Horn theory\xspace}
\newcommand \talgs {Horn theories\xspace}

\newcommand \tco {coherent theory\xspace}
\newcommand \tcos {coherent theories\xspace}

\newcommand \twdij {\wdij theory\xspace}
\newcommand \twdijs {\wdij theories\xspace}

\newcommand \tdy {dynamical theory\xspace}
\newcommand \tdys {dynamical theories\xspace}

\newcommand \tel {regular theory\xspace}
\newcommand \tels {regular theories\xspace}

\newcommand \telri {cartesian theory\xspace}
\newcommand \telris {cartesian theories\xspace}

\newcommand \tf {finitely generated\xspace}

\newcommand \tfo {formal theory\xspace}
\newcommand \tfos {theory formelles\xspace}

\newcommand \tgm {\gmq theory\xspace}
\newcommand \tgms {\gmq theories\xspace}

\newcommand \Tho {Theorem\xspace}
\newcommand \tho {theorem\xspace}
\newcommand \thos {theorems\xspace}

\newcommand \tpe {purely equational theory\xspace}
\newcommand \tpes {purely equational theories\xspace}

\newcommand \uvl {universal\xspace}

\newcommand \trdi {distributive lattice\xspace}
\newcommand \trdis {distributive lattices\xspace}

\newcommand \vfn {verification\xspace}
\newcommand \vfns {verifications\xspace}

\newcommand \vst {Valuativstellensatz\xspace}
\newcommand \vsts {Valuativstellensätze\xspace}

\newcommand \zed {zero-dimensional\xspace}
\newcommand \zedr {zero-dimensional reduced\xspace}


\newcommand \cov {constructive\xspace}

\newcommand \coma {\cov \maths}
\newcommand \clama {classical \maths}

\renewcommand \cot {constructively\xspace}

\newcommand \mathe {mathematical\xspace}
\newcommand \maths {mathematics\xspace}

\newcommand \matn {mathematician\xspace}

\newcommand \pte {excluded middle principle\xspace}

\newcommand \prco {\cov proof\xspace}
\newcommand \prcos {constructive proofs\xspace}

\newcommand \tcg {compactness theorem\xspace}
\newcommand \Tcgi {The \tcg implies the following result. }

\rdb
\label{beginenglish}

\begin{abstract}
This note aims to construct an \gui{intrinsic} splitting field for the polynomial $Y^n-1$ over the rational field $\QQ$, in a way that Gauss, Kummer, Kronecker and Bishop would have liked. Contrary to the usual presentations, our construction does not use any splitting field of $Y^n-1$ which would be given before demonstrating the irreducibility of the \cyct \pol.
\end{abstract}




\section*{Introduction} 

The most widespread definition of the \textsl{$n$-th cyclotomic polynomial}  $\Phi_n(Y)$ explicitly makes use of the roots of the unity in $\CC$,
\begin{equation}\label{def1}
\Phi_n(Y)=\prod\nolimits_\zeta(Y-\zeta),
\end{equation}
where $\zeta$ runs over the set of primitive $n$-th roots of of unity, i.e.\ $\zeta^n=1$ and $\zeta^d\neq 1$ if $d\divi n$ and $d\neq n$.
 
Gauss proved that $\Phi_n(Y)\in\ZZY$ and also that $\Phi_p$ is irreducible in $\QQY$ when $p$ is prime. Kronecker proved that $\Phi_n(Y)$ is irreducible for an arbitrary $n$.

The \textsl{$n$-th inverse cyclotomic polynomial} is defined as $\Psi_n(Y)=\frac{Y^n-1}{\Phi_n(Y)}$.

Another well known definition of cyclotomic polynomials is given inductively by  $\Phi_1(Y)= Y-1$ and 
\begin{equation}\label{def2}
\Phi_n(Y)=\frac{Y^n-1}{\prod\limits_{0<d<n,d\mid n}\Phi_d(Y)},\,\,\, n\geq 2.
\end{equation} 
For this second definition by induction, it is necessary to prove first that \smash {$\prod\limits_{0<d<n,d\mid n}\Phi_d(Y)$} divides $Y^n-1$. This can be deduced from the first definition. 

All known irreducibility proofs of $\Phi_n(Y)$ in $\QQ[Y]$ use a splitting field of $Y^n-1$. 

\smallskip In this note we propose a new direct definition as follows. For $n=1$, $\Psi_1(Y)=1$ and  $\Phi_1(Y)=Y-1$. For $n\geq 2$,
\begin{equation}\label{def3}
\Psi_n(Y)=\lcm\limits_{d:0<d<n,d\mid n}(Y^d-1)  \; \text{ and }  \;\Phi_n(Y)=(Y^n-1)/\Psi_n(Y).
\end{equation} 

Then we demonstrate the main properties of cyclotomic polynomials, in particular that $\Phi_n(Y)$ is irreducible for every $n$. 

So we get an \textsl{intrinsic} splitting field of $Y^n-1$ as $\Qn:=\aqo{\QQY}{\Phi_n}$
 in a way that Gauss, Kummer, Kronecker, and Bishop would have liked.

\smallskip 
This note is written in a fully constructive manner so that the results of the proofs can be directly implemented in a formal proof management system. In particular, quotient structures are addressed through congruence systems, reflecting the spirit of Gauss, Kummer, and Kronecker, renewed by Bishop in his minimalist constructive version of set theory (\cite{Bi67}).   
 
\smallskip 
It is important to clarify that we do not intend to present a new algorithm for the efficient calculation of cyclotomic polynomials. To our knowledge, the algorithms introduced in reference \cite{ArMo} remain the most effective for this purpose. Even so, we did some calculations using SAGE 10.4, Mathematica 13, and Maple 2022. In SAGE, we found that the algorithm defined in our paper is surprisingly more efficient than the native one when $n $ is a large product of distinct prime numbers. Furthermore, we observe that the execution times in Maple are unmatched, and it is probably because the algorithm used in Maple is the SPL algorithm  introduced in \cite{ArMo}.

\smallskip For further reading on cyclotomic polynomials, we recommend the works of \cite{Bro,Kro,Wei,Tignol}.
 
\section{Preliminaries}
Below we introduce some notation and classic results of polynomial theory, which we will use in Section \ref{main}. 

Let $\PP$ denote the set of prime numbers. 

The \textsl{\cara Euler function} ie the map $\phi\colon\NN\etl\to\NN\etl$ where $\phi(n)$ is the number of generators of a cyclic group with $n$ \elts. 

In a gcd domain,   
 the $\gcd(a,b)$ will be denoted by $a\vi b$, and the $\lcm(a,b)$ ($a,b\neq 0$) by $a\vu b$.  
 
Given a polynomial $f$ in $\ZY$, the greatest common divisor of its coefficients is defined to be the content of $f$ and denoted by $\rG(f)$. If the content is equal to 1, the polynomial is said to be primitive.
We now recall the relation between divisibility in $\ZY$ and that in $\QY$.\footnote{Classic Proposition \ref{propZXQX} is generalised by replacing $\Z$ with a gcd domain and $\QQ$ with its quotient field.} 
 
\begin{proposition}[$\ZY$ is a gcd domain] \label{propZXQX}~
\begin{enumerate}
\item  Let $f,g\in\QY$ be monic polynomials. If $fg\in\ZY$, then $f$ and $g$ are in $\ZY$ (Gauss Lemma).
\item Let $f,g\in\ZY$. Then $f$ divides $g$ in $\ZY$ \ssi $f$ divides $g$ in $\QY$ and $\rG(f)$ divides $\rG(g)$ in $\Z$.
\item The ring $\ZY$ is a gcd domain. A \pol $f\in\ZY$ is irreducible \ssi it is primitive and irreducible in $\QY$. 
\item  Two primitive polynomials of $\ZY$ are coprime \ssi they are coprime in $\QY$.
\end{enumerate} 
\end{proposition}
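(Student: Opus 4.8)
The plan is to prove the four statements essentially in the order given, since each naturally feeds the next, and the real content is concentrated in Gauss's Lemma (item 1); items 2--4 are then bookkeeping around the content function $\rG$.

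\textbf{Item 1 (Gauss's Lemma).} Given monic $f,g\in\QY$ with $fg\in\ZY$, I would first clear denominators: choose the least positive integers $a,b$ with $af,bg\in\ZY$, so that $af$ and $bg$ are primitive up to their leading coefficients $a$ and $b$. Actually, since $f$ is monic, the content $\rG(af)$ equals $\gcd(a,\ldots)$; I would argue more cleanly that $\rG(af)=a'$ and $\rG(bg)=b'$ with $a'\mid a$, $b'\mid b$, but in fact by minimality of $a$ one gets $\rG(af)=1$ except for the contribution forced by the leading coefficient $a$ — so the tidy route is: write $f=\frac1a f_0$, $g=\frac1b g_0$ with $f_0,g_0\in\ZY$ primitive and $a,b\in\NN\etl$; comparing leading coefficients, the leading coefficient of $f_0$ is $a$ and that of $g_0$ is $b$. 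Then $abfg = f_0g_0$, and the classical content-multiplicativity statement "a product of primitive polynomials is primitive" (which I would prove constructively by the standard argument: if a prime $p$ divided $\rG(f_0g_0)$, then reducing mod $p$ would make $f_0g_0\equiv 0$ in $\Fp[Y]$, forcing $f_0\equiv0$ or $g_0\equiv0$ since $\Fp[Y]$ is a domain, contradicting primitivity) gives $\rG(f_0g_0)=1$. But $fg\in\ZY$ means $ab\mid f_0g_0$ coefficientwise, i.e. $ab\mid\rG(f_0g_0)=1$, so $a=b=1$ and $f,g\in\ZY$. The main obstacle is making the content-multiplicativity lemma genuinely constructive: the reduction-mod-$p$ argument needs $p$ to range over an explicitly given finite set of candidate primes (the prime divisors of $\rG(f_0g_0)$, or better, one argues directly that $\gcd$ of the coefficients of $f_0g_0$ divides $1$ by a resultant-style or Euclidean manipulation), rather than invoking "let $p$ be a prime dividing the content" nonconstructively. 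I expect the paper handles this by working with gcd's directly rather than prime factorizations.

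\textbf{Item 2.} The direction ($\Rightarrow$) is immediate: if $g=fh$ in $\ZY$ then $f\mid g$ in $\QY$ and $\rG(g)=\rG(f)\rG(h)$ by content-multiplicativity, so $\rG(f)\mid\rG(g)$. For ($\Leftarrow$), suppose $g=fq$ with $q\in\QY$ and $\rG(f)\mid\rG(g)$ in $\Z$. Write $f=\rG(f)f_1$, $g=\rG(g)g_1$ with $f_1,g_1\in\ZY$ primitive, and $q=\frac{c}{d}q_1$ with $q_1\in\ZY$ primitive, $c,d\in\NN\etl$ coprime. Then $\rG(g)g_1 = \frac{c\,\rG(f)}{d}f_1q_1$; since $f_1q_1$ is primitive (content-multiplicativity again), comparing contents forces $d\mid c\rG(f)$, hence $d\mid\rG(f)$ (as $\gcd(c,d)=1$), hence $d\mid\rG(g)$; then $q=\frac{c}{d}q_1$ with $\frac{\rG(g)}{d}\in\Z$ shows $h:=\frac{\rG(f)c}{d\,\rG(g)}\cdot(\text{stuff})$ — more directly, $g/f = q \in\QY$ and clearing through the content divisibility shows $q\in\ZY$. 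I would streamline this rather than belabor it.

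\textbf{Items 3 and 4.} For item 3: to see $\ZY$ is a gcd domain, given $f,g\in\ZY$ nonzero, compute $h_0=\gcd_{\QY}(f,g)$ (exists since $\QY$ is a PID/Euclidean) normalized to a primitive element $h\in\ZY$, and set $d=\gcd(\rG(f),\rG(g))\in\Z$; then I claim $dh=\gcd_{\ZY}(f,g)$, which follows directly from item 2 applied to test divisors. For irreducibility: if $f\in\ZY$ is irreducible, it cannot have a nontrivial content (else $f=\rG(f)\cdot f_1$ is a nontrivial factorization), so $f$ is primitive; and it must be irreducible in $\QY$ because a nontrivial factorization $f=gh$ in $\QY$ with $\deg g,\deg h\ge1$ can be rescaled (multiply $g$ by its denominator-clearing constant and divide $h$ correspondingly, then extract contents) to a factorization in $\ZY$ by item 1's technique. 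Conversely a primitive $f$ that is irreducible in $\QY$ admits no factorization $f=gh$ in $\ZY$ with both factors nonunits: if $\deg g\ge1$ and $\deg h\ge 1$ this contradicts $\QY$-irreducibility, and if say $\deg h=0$ then $h\mid\rG(f)=1$ so $h$ is a unit. For item 4: two primitive $f,g\in\ZY$ are coprime in $\ZY$ iff $\gcd_{\ZY}(f,g)=1$ iff (by the gcd formula from item 3, with $d=\gcd(\rG f,\rG g)=1$) the primitive normalization of $\gcd_{\QY}(f,g)$ is $1$ iff they are coprime in $\QY$. The only real obstacle in the whole proposition is item 1; everything after is routine given a constructive content-multiplicativity lemma, which I would state and prove as the single supporting lemma at the outset.
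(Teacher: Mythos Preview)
Your proof is essentially correct and follows the standard route through content-multiplicativity (Gauss), but note that the paper does not actually prove this proposition: it is stated in the preliminaries as a classical result being recalled, with a footnote remarking that it generalizes to any gcd domain in place of $\Z$. So there is no proof in the paper to compare against.

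Two minor points on your write-up. First, in item~2 ($\Leftarrow$) you trail off mid-computation and say you would ``streamline this''; the clean version is simply: write $f=\rG(f)f_1$, $g=\rG(g)g_1$ with $f_1,g_1$ primitive, and $q\in\QY$ with $g=fq$; clear denominators to get $q=\frac{c}{d}q_1$ with $q_1$ primitive and $\gcd(c,d)=1$; then $d\,\rG(g)\,g_1 = c\,\rG(f)\,f_1q_1$, and taking contents gives $d\,\rG(g)=c\,\rG(f)$, so $d\mid c\,\rG(f)$, hence $d\mid\rG(f)\mid\rG(g)$, and then $q=\frac{c\,\rG(f)}{d}\cdot\frac{1}{\rG(g)}\cdot(\text{integer polynomial})$ --- but actually the simplest finish is just $q = g/f$ with $\rG(q)=\rG(g)/\rG(f)\in\Z$ once one has content-multiplicativity over $\Q$. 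Second, your constructivity concern about the reduction-mod-$p$ argument is well placed given the paper's stance; over $\Z$ specifically, prime factorization of a given nonzero integer is algorithmic, so the argument is constructive as written, though over a general gcd domain one would indeed need the direct Dedekind--Mertens / Kronecker-style argument you allude to.
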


Recall that a field is said to be discrete if it has a zero test, which in particular allows us to treat linear algebra on the field in a completely explicit way.

\begin{lemma} \label{fact1}
In an arbitrary discrete field $\gK$, we have in $\KY$
\[(Y^m-1)\vi (Y^n-1)=Y^{ m\vi n}-1,
\]
for $m,n \in \mathbb{N}$.
\end{lemma}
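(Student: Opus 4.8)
The plan is to run the Euclidean algorithm simultaneously on the two polynomials $Y^m-1$, $Y^n-1$ and on their exponents $m,n$. Since $\gK$ is a discrete field, $\KY$ is a Euclidean domain with effective division, so gcds exist, are computable, and I normalise them to be monic, with the conventions that the gcd of $0$ with a monic polynomial $p$ is $p$, and $\gcd(0,0)=0$; on $\NN$ I use $0\vi k=k\vi 0=k$ and $0\vi 0=0$. With these conventions $Y^{m\vi n}-1$ is, in every case, exactly the candidate monic gcd (it equals $0$ precisely when $m=n=0$, since $Y^0-1=0$).

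The key identity is: for integers $m\ge n\ge 1$,
\[
Y^m-1=Y^{\,m-n}\,(Y^n-1)+(Y^{\,m-n}-1),
\]
which one checks by expanding the right-hand side. Since $\gcd(a,b)=\gcd(b,\,a-qb)$ in any Euclidean domain, this yields
\[
(Y^m-1)\vi(Y^n-1)=(Y^n-1)\vi(Y^{\,m-n}-1),
\]
and on exponents this is the subtractive Euclidean step $(m,n)\mapsto(n,m-n)$, which preserves the gcd: $n\vi(m-n)=m\vi n$.

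I would then conclude by strong induction on $m+n$. If $m+n=0$, both sides are $0$. Otherwise, using the symmetry of the two sides in $m$ and $n$, I may assume $m\ge n$. If $n=0$, then $m\ge 1$ and both sides equal $Y^m-1$. If $n\ge 1$, the key identity reduces $(Y^m-1)\vi(Y^n-1)$ to $(Y^n-1)\vi(Y^{\,m-n}-1)$; since $n+(m-n)=m<m+n$, the induction hypothesis applies to the pair $(n,m-n)$ and gives $Y^{\,n\vi(m-n)}-1=Y^{\,m\vi n}-1$, which is what we want.

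There is no serious obstacle here; the only care needed is the bookkeeping around the degenerate cases $m=0$ or $n=0$ and the fact that a gcd is a priori defined only up to a unit, which is why the monic normalisations are fixed at the outset. (A non-recursive variant: $m\vi n$ divides $m$ and $n$, so $Y^{\,m\vi n}-1$ divides both $Y^m-1$ and $Y^n-1$; and if $h\in\KY$ divides both, then — choosing, when $m,n\ge 1$, integers $\alpha,\beta\ge 1$ with $\alpha m=(m\vi n)+\beta n$, the cases $m=0$ or $n=0$ being immediate — the equality $Y^{\alpha m}-1=Y^{\,m\vi n}(Y^{\beta n}-1)+(Y^{\,m\vi n}-1)$ together with $h\divi Y^{\alpha m}-1$ and $h\divi Y^{\beta n}-1$ forces $h\divi Y^{\,m\vi n}-1$.)
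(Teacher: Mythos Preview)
Your proof is correct and follows essentially the same approach as the paper: both run the Euclidean algorithm on $Y^m-1$ and $Y^n-1$ in parallel with the Euclidean algorithm on the exponents $m,n$. The only cosmetic difference is that the paper phrases the key step as a full division with remainder (if $r$ is the remainder of $n$ by $m$ then $Y^r-1$ is the remainder of $Y^n-1$ by $Y^m-1$), whereas you use the subtractive form $Y^m-1=Y^{m-n}(Y^n-1)+(Y^{m-n}-1)$; your treatment of the degenerate cases $m=0$ or $n=0$ is more careful than the paper's sketch.
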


This is based on Euclidean algorithm and on the fact that if $r$ is the remainder of the division of $n\geq 2$ by $m\geq 2$, then $Y^r-1$ is the remainder of the division of $Y^n-1$ by $Y^m-1$.

\begin{lemma}[Substitution Lemma] \label{fact2}~\\ 
Let $\gK$ be a discrete field and let $f_1,\dots,f_\ell,h\in\KY$ be non-constant monic \pols. Let $g=\gcd(f_1,\dots,f_\ell)$ and  $f=\lcm(f_1,\dots,f_\ell)$, $g$ and $f$ monic. Let $F_i(Y)=f_i(h(Y))$, $G(Y)=g(h(Y))$ and $F(Y)=f(h(Y))$. Then $G$ is the greatest common divisor of $F_1, \dots,F_\ell$ and $F$ is their  least common multiple.
\end{lemma}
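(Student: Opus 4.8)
The plan is to reduce the statement about the $F_i = f_i \circ h$ to the known gcd/lcm relations among the $f_i$, using that $Y \mapsto h(Y)$ induces a ring homomorphism $\KY \to \KY$ that is injective and compatible with divisibility in a controlled way. Concretely, I would first establish the divisibility half: since $g \mid f_i$ in $\KY$, writing $f_i = g q_i$ gives $F_i = G \cdot (q_i \circ h)$, so $G \mid F_i$ for each $i$; and since $f_i \mid f$, writing $f = f_i p_i$ gives $F = F_i \cdot (p_i \circ h)$, so $F_i \mid F$ for each $i$. Thus $G$ is a common divisor of the $F_i$ and $F$ is a common multiple. Note all these polynomials are monic (composition of monic polynomials is monic, up to the degree being positive, which holds since the $f_i$ and $h$ are non-constant), so there is no unit ambiguity.

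The substance is the maximality/minimality. For the gcd part I would use the Bézout-type identity valid over a discrete field: $g = \gcd(f_1,\dots,f_\ell)$ means there exist $u_i \in \KY$ with $g = \sum_i u_i f_i$. Substituting $h$: $G = \sum_i (u_i \circ h) F_i$, so any common divisor $D$ of the $F_i$ divides $G$; hence $G$ is (up to a unit, hence exactly, by monicity) the gcd of the $F_i$. For the lcm part, the cleanest route is to reduce to $\ell = 2$ by induction (since $\lcm$ of a finite family is built up two at a time, and likewise on the $F$ side) and then use the classical identity $\gcd(a,b)\cdot \lcm(a,b) = ab$ (up to units) in the gcd domain $\KY$: from $g_{12} f = f_1 f_2$ with $g_{12} = \gcd(f_1,f_2)$, substitution gives $G_{12} F = F_1 F_2$ where $G_{12} = \gcd(F_1,F_2)$ by the gcd part already proved; since $\gcd(F_1,F_2)\cdot\lcm(F_1,F_2) = F_1 F_2$ as well, and $\KY$ is a domain, we get $F = \lcm(F_1,F_2)$ after matching the monic normalizations. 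Iterating the two-variable case handles general $\ell$.

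An alternative, perhaps more uniform, approach avoids Bézout and the $\gcd\cdot\lcm$ identity: one can instead factor everything into monic irreducibles. Over a discrete field $\KY$ is a UFD with explicit factorization, so write each $f_i$ as a product of prime powers; $\gcd$ and $\lcm$ are then the obvious min/max over exponents. Applying $h$ does not keep irreducibles irreducible, so this does not directly transport the factorization — which is exactly why I prefer the Bézout/$\gcd$-$\lcm$-identity route, since those identities are preserved verbatim under the substitution homomorphism. So I would commit to the first approach.

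The main obstacle I anticipate is purely bookkeeping rather than conceptual: keeping track of the monic normalizations so that equalities (not merely associate-equalities) hold, and making sure the substitution really is an injective ring homomorphism on $\KY$ (which it is precisely because $h$ is non-constant, so $\deg(p\circ h) = \deg(p)\deg(h)$, forcing $p \circ h = 0 \Rightarrow p = 0$ and preserving degrees strictly enough to pin down leading coefficients). Everything else — the Bézout identity over a discrete field, the $\gcd(a,b)\lcm(a,b)=ab$ identity, the induction on $\ell$ — is standard and can be invoked. I would also remark that constructively nothing is lost: a discrete field admits gcd computations via the Euclidean algorithm, so all the objects ($g$, $f$, the cofactors $q_i,p_i$, the Bézout coefficients $u_i$) are produced explicitly, and their images under substitution are computed by mere polynomial composition.
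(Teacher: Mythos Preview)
Your proposal is correct and follows essentially the same route as the paper: reduce to $\ell=2$ (the paper does this for both gcd and lcm via associativity, you do it only for the lcm, a harmless variation), use the B\'ezout identity pushed through the substitution homomorphism to get the gcd, and use the relation $\gcd(f_1,f_2)\cdot\lcm(f_1,f_2)=f_1f_2$ pushed through substitution to get the lcm. Your additional remarks on monic normalization and injectivity of the substitution map are correct and make explicit what the paper leaves implicit.
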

%
\begin{proof} 

The substitution of $h(Y)$ for $Y$ is a homomorphism of \Klgs, from $\KY$ into itself because it is an evaluation homomorphism. The point here is to see that it is also a morphism for the laws of gcd and lcm. This is also true for any morphism between Bézout domain, but with the ambiguity due to the fact that the gcd and the lcm are only well defined up to association. In the present case, we benefit from the fact that every non-zero polynomial is associated with a unique monic polynomial.

Given the associativity of the gcd and the lcm, it is enough to treat the case where $\ell=2$.
In the Bézout relations which express $g$ as the linear combination of $(f_1,f_2)$, we substitute $h(X)$ for $X$ and we obtain the Bézout relations which certify that $G$ is the gcd of $(F_1,F_2)$. For the lcm, we substitute $h(Y)$ for $Y$ in the equality $g(Y)f(Y)=f_1(Y)f_2(Y)$.
\end{proof}

\begin{lemma}[Roots and Decomposition] \label{fact3} 

Let $\gA$ be a ring and let $f\in \AY$ be a monic polynomial, $\deg(f)=n$. Let $x_1,\dots,x_m$ be zeros of $f$ in $\gA$, with $x_i-x_j$ invertible in~$\gA$ for $i\neq j$. Then we have that
\begin{enumerate}
\item $f$ is divisible by $\prod_{i=1}^m(Y-x_i)$,
\item if $m=n$, then $f=\prod_{i=1}^n(Y-x_i)$,
\item if $m>n$, the ring is trivial ($1=_{\gA}0$).
\end{enumerate}
\end{lemma}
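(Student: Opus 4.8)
The plan is to prove all three items at once by an induction that peels the roots off one at a time, the only ring-theoretic tool being the factor theorem over an arbitrary commutative ring: if $h\in\AY$ and $a\in\gA$ satisfy $h(a)=0$, then $Y-a$ divides $h$, because dividing $h$ by the monic polynomial $Y-a$ leaves a constant remainder, which equals $h(a)=0$. I would also use two elementary facts, both valid over any commutative ring precisely because of monicity: a product of monic polynomials is monic with degree the sum of the degrees; and the quotient of a monic polynomial of degree $d\geq 1$ by $Y-a$ is monic of degree $d-1$.

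Concretely, I would prove by induction on $k$ (for $0\le k\le m$) the statement $P(k)$: \emph{either $\gA$ is trivial, or $k\le n$ and there is a monic $q_k\in\AY$ of degree $n-k$ with $f=\bigl(\prod_{i=1}^{k}(Y-x_i)\bigr)q_k$.} The base case $k=0$ is $q_0=f$. For the step, assume $\gA$ non-trivial and $P(k-1)$, so $k-1\le n$ and $q_{k-1}$ is monic of degree $n-k+1$; evaluating $f=\bigl(\prod_{i=1}^{k-1}(Y-x_i)\bigr)q_{k-1}$ at $x_k$ and using that $\prod_{i=1}^{k-1}(x_k-x_i)$ is a product of units (each $x_k-x_i$ with $i<k$ is invertible by hypothesis, as $i\neq k$) together with $f(x_k)=0$, we obtain $q_{k-1}(x_k)=0$. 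If $n-k+1\ge 1$, the factor theorem and the monic-division fact provide a monic $q_k$ of degree $n-k$ with $q_{k-1}=(Y-x_k)q_k$, which gives $P(k)$; if instead $n-k+1=0$, then $q_{k-1}=1$, so $1=q_{k-1}(x_k)=0$ and $\gA$ is trivial. Finally: item 1 is $P(m)$ (over the trivial ring everything divides everything); item 2 is the case $m=n$ of $P(m)$, where the cofactor is monic of degree $0$ hence equals $1$ (or $\gA$ is trivial, when the equality is automatic); and item 3 follows because for $m>n$ the second alternative of $P(m)$ would force $m\le n$, so $\gA$ must be trivial, i.e.\ $1=_{\gA}0$.

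The delicate point — and the reason the hypothesis on the differences $x_i-x_j$ is imposed — is that $\gA$ is a completely general commutative ring: there is no zero-test, it need not be reduced or a domain, and degrees of products can collapse when leading coefficients multiply to $0$. Invertibility of $\prod_{i<k}(x_k-x_i)$ is exactly what guarantees that $x_k$ survives as a root of the successive cofactor $q_{k-1}$, which is what keeps the peeling going; and the fact that every polynomial appearing (namely $f$, the $Y-x_i$, and all the $q_k$) is monic is what makes polynomial division behave and degrees add, so that the only case distinction required is the harmless one on the integer $n-k+1$, with no appeal to a decision on whether $\gA$ is trivial.
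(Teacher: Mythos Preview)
Your proof is correct and carefully written; the induction on $k$ with the monic cofactors $q_k$ is exactly the right mechanism, and your emphasis on why monicity and the invertibility of the $x_i-x_j$ are needed over a general commutative ring is well placed. The paper itself gives no proof of this lemma (it is marked ``Left to the reader''), so there is no alternative approach to compare against; your argument is precisely the kind of verification the authors are tacitly relying on.
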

\facile

\section{Construction of \cycts \pols}\label{main}

Next, we provide an algebraic and constructive definition of both, the cyclotomic polynomial and the inverse cyclotomic polynomial.  

\begin{definota} \label{defiPolcyc}~\\
We define the $n$-th cyclotomic polynomial $\Phi_n(Y)$ and the $n$-th inverse cyclotomic polynomial $\Psi_n(Y)$ in $\QQY$ as follows:

\begin{itemize}
\item For $n=1$, $\Psi_1(Y)=1$ and $\Phi_1(Y)=Y-1$,
\item For $n\geq 2$, $\Psi_n(Y)=\lcm\limits_{d:0<d<n,d\mid n}(Y^d-1)$  and  $\Phi_n(Y)=(Y^n-1)/\Psi_n(Y)$. 
%
\end{itemize}
We also write  $\Gamma_n(Y)=(Y^n-1)/(Y-1)$, thus, {$\smash\Psi_n(Y)=(Y-1)\lcm\limits_{d:1<d<n,d\mid n}\Gamma_d(Y)$}.\\
Finally, let $\Qn:=\aqo{\Q[Y]}{\Phi_n(Y)}$ and
$y_n$ be $Y$ modulo $\Phi_n$ in $\QQY$. So $\Qn=\QQ[y_n]$. 
\end{definota}
Note that we have not claimed at the moment that $\Qn$ is a field.

\smallskip 
Observe that the quotient $\Phi_n $ is a well-defined element in $\QQY$. Indeed, each polynomial $Y^d-1$ divides $Y^n-1$ when $ d$ is a divisor of $ n$. Consequently, this property holds for their least common multiple $\Psi_n $ as well. Since we treat the lcm as monic polynomial, $\Phi_n $ is monic and well-defined.

\smallskip 
For example, we have
\begin{itemize}
  \item[-]$\Phi_2(Y)=Y+1 ,  \Phi_3(Y)=Y^2+Y+1,$
  \item[-]$\Psi_4(Y)=Y^2-1 , \Phi_4(Y)=Y^2+1,$
  \item[-]$\Psi_8(Y)=Y^4-1 ,  \Phi_8(Y)=Y^4+1.$
   \item[-]For $p\in\PP$, $\Psi_p=\Phi_1$ so that $\Phi_1\Phi_p=Y^p-1$, and $\Phi_p=\Gamma_p=\som_{r=0}^{p-1}Y^{r}$. 

\end{itemize}

\begin{lemma} \label{lemPolcyc1}
For every $n$, the monic \pol $\Phi_n(Y)$ has coefficients in $\ZZ$. 
\end{lemma}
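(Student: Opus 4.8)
The plan is to prove the statement by strong induction on $n$, using the recursive definition $\Phi_n(Y) = (Y^n - 1)/\Psi_n(Y)$ together with the divisibility theory of $\ZY$ from Proposition~\ref{propZXQX}. The base case $n = 1$ is immediate since $\Phi_1(Y) = Y - 1 \in \ZY$. For the inductive step, I would assume $\Phi_d(Y) \in \ZY$ for all $d < n$ and aim to show $\Phi_n(Y) \in \ZY$.

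First I would exhibit $\Phi_n$ as a quotient of polynomials known to lie in $\ZY$. The natural candidate for the denominator is not $\Psi_n$ directly (which is an lcm, defined a priori only in $\QY$) but rather the product $P_n(Y) := \prod_{0 < d < n,\, d \mid n} \Phi_d(Y)$, which by the induction hypothesis is a monic polynomial in $\ZY$. The key algebraic fact to establish is that $\Phi_n(Y) = (Y^n - 1)/P_n(Y)$, i.e.\ that $P_n = \Psi_n$ as monic polynomials, or at least that the identity $Y^n - 1 = \Phi_n(Y)\, P_n(Y)$ holds in $\QY$. Granting this, $\Phi_n$ is a monic polynomial in $\QY$ whose product with the monic polynomial $P_n \in \ZY$ equals $Y^n - 1 \in \ZY$; then part~1 of Proposition~\ref{propZXQX} (Gauss's Lemma) gives $\Phi_n \in \ZY$ directly.

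So the real work is showing $\Psi_n = P_n$, i.e.\ that $\lcm_{0<d<n,\,d\mid n}(Y^d - 1)$ equals $\prod_{0<d<n,\,d\mid n}\Phi_d(Y)$ (as monic polynomials in $\QY$). I would prove both divisibilities. For $P_n \mid \Psi_n$: each $\Phi_d$ with $d \mid n$, $d < n$ divides $Y^d - 1$ hence divides $\Psi_n$; and the $\Phi_d$'s for distinct $d$ are pairwise coprime in $\QY$ — this coprimality is the point that needs care and would itself be proved by induction, observing that $\gcd(Y^a - 1, Y^b - 1) = Y^{a \wedge b} - 1$ (Lemma~\ref{fact1}) forces any common factor of $\Phi_a$ and $\Phi_b$ (with $a \neq b$, both dividing $n$) to divide $Y^{a\wedge b} - 1 = \prod_{e \mid a\wedge b}\Phi_e$, and then a minimality/induction argument rules it out. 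Pairwise coprimality of monic factors lets one conclude their product divides $\Psi_n$. For $\Psi_n \mid P_n$: it suffices that each $Y^d - 1$ (with $d \mid n$, $d < n$) divides $P_n$; and indeed $Y^d - 1 = \prod_{e \mid d}\Phi_e$ by unrolling the recursive definition at level $d$, and every such $e$ also divides $n$ and satisfies $e \le d < n$, so $\prod_{e\mid d}\Phi_e$ divides $\prod_{0<e<n,\,e\mid n}\Phi_e \cdot \Phi_1^{?}$ — here I must be slightly careful to keep $\Phi_1$ in the right place, but since all the $\Phi_e$ occurring are among the factors of $(Y-1)\prod_{1<e<n,\,e\mid n}\Phi_e$, the divisibility holds.

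**The main obstacle** I expect is the pairwise-coprimality of the $\Phi_d$ for distinct divisors $d$ of $n$; everything else is bookkeeping with the recursion $Y^n - 1 = \prod_{d\mid n}\Phi_d$ and one application of Gauss's Lemma. An alternative that sidesteps coprimality: one can instead directly prove $Y^n - 1 = \Phi_n \cdot P_n$ in $\QY$ by showing $P_n \mid Y^n - 1$ and that the complementary factor is exactly $(Y^n-1)/\Psi_n$, using $\Psi_n \mid Y^n - 1$ and $P_n \mid \Psi_n$ and $\Psi_n \mid P_n$ as above; with that identity in hand, Gauss's Lemma finishes the induction without ever needing to isolate $\Phi_n$ as a gcd. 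Either way, the induction hypothesis "$\Phi_d \in \ZY$ for $d<n$" is what makes $P_n \in \ZY$, and Proposition~\ref{propZXQX}(1) is what transfers integrality to $\Phi_n$ itself.
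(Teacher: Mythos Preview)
Your approach is correct but far more elaborate than the paper's. The paper's proof is a single line: apply Proposition~\ref{propZXQX}(1) (Gauss's Lemma) directly to the factorization $Y^n-1=\Phi_n(Y)\,\Psi_n(Y)$. Both factors are, by construction, monic polynomials in $\QY$ (the lcm of monic polynomials in the PID $\QY$ is taken monic, and the quotient of monic polynomials is monic), and their product lies in $\ZY$; Gauss's Lemma then forces each factor into $\ZY$. No induction, no identification of $\Psi_n$ with $P_n$, and no coprimality of the $\Phi_d$ are needed.

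What you do differently is essentially front-load the content of Lemma~\ref{lemcrucial} and Theorem~\ref{Mainth1} (the coprimality of the $\Phi_d$ and the product formula $Y^n-1=\prod_{d\mid n}\Phi_d$) into this lemma. That route works, but it is circuitous here: the point you flag as the ``main obstacle'' (pairwise coprimality) is simply not required for integrality. Your instinct that $\Psi_n$ is ``defined a priori only in $\QY$'' is exactly right---and that is already enough, since Gauss's Lemma asks only that the two monic factors live in $\QY$ with product in $\ZY$. The payoff of your longer argument is that you would simultaneously obtain $\Psi_n=P_n$ and the product decomposition; the paper prefers to separate concerns and establish those later.
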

%
\begin{proof}
Point \textsl{1}, Proposition \ref{propZXQX}. 
\end{proof}
%
%

\begin{lemma} \label{lemPolcyc2}
We have
 $\Psi_n(Y)=\lcm_{p:p\in\PP,p\mid n}(Y^{n/p}-1)$.
\end{lemma}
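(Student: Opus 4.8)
The plan is to show that the two finite families $\bigl(Y^d-1\bigr)_{0<d<n,\ d\mid n}$ and $\bigl(Y^{n/p}-1\bigr)_{p\in\PP,\ p\mid n}$ have the same monic lcm in $\QY$. Recall that the lcm of a finite family of nonzero polynomials over a field is, up to a unit, characterized by being a common multiple that divides every common multiple; the monic normalization (as used right after Definition \ref{defiPolcyc}) makes it unique. So it is enough to prove that each member of either family divides some member of the other. First I would record the elementary fact that $a\mid b$ in $\NN$ implies $Y^a-1\mid Y^b-1$ in $\QY$: this is immediate from $Y^b-1=(Y^a-1)\sum_{j=0}^{b/a-1}Y^{aj}$, or, reusing what is already available, from Lemma \ref{fact1}, since $a\mid b$ gives $a\vi b=a$ and hence $(Y^a-1)\vi(Y^b-1)=Y^{a\vi b}-1=Y^a-1$.

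The combinatorial core is the observation that every $d$ with $d\mid n$ and $0<d<n$ divides $n/p$ for at least one prime $p\mid n$. Indeed $n/d$ is an integer $>1$, so it has a prime factor $p$; writing $n/d=pk$ with $k\in\NN\etl$ gives $n/p=dk$, hence $d\mid n/p$. (If $n=1$ the index set of proper divisors is empty and both sides reduce to $1=\Psi_1$; if $n\geq 2$ there is at least one prime dividing $n$, so the right-hand family is nonempty.)

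Combining these two facts yields the two divisibilities. For each prime $p\mid n$ we have $0<n/p<n$ and $n/p\mid n$, so $Y^{n/p}-1$ occurs among the $Y^d-1$; thus $\Psi_n$ is a common multiple of all the $Y^{n/p}-1$, hence divisible by $L:=\lcm_{p\in\PP,\ p\mid n}(Y^{n/p}-1)$. Conversely, for each proper divisor $d$ of $n$ pick a prime $p\mid n$ with $d\mid n/p$; then $Y^d-1\mid Y^{n/p}-1\mid L$, so $L$ is a common multiple of all the $Y^d-1$, hence divisible by $\Psi_n$. Since $\Psi_n$ and $L$ are monic and divide each other, $\Psi_n=L$, which is the claim.

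The proof has no serious obstacle: the one real idea is the one-line remark that a proper divisor of $n$ always divides some $n/p$. The only points needing a little care are the bookkeeping with lcm's taken up to units together with their monic normalization, and the degenerate case $n=1$ — both harmless.
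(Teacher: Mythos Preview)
Your proof is correct and follows essentially the same approach as the paper: the key observation in both is that every proper divisor $d$ of $n$ divides $n/p$ for some prime $p\mid n$, whence $Y^d-1\mid Y^{n/p}-1$. The paper's proof is more terse (it states only this one direction, leaving the trivial reverse inclusion implicit), while you spell out both divisibilities, the monic normalization, and the edge case $n=1$; but the underlying idea is identical.
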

%
\begin{proof}
If $d$ strictly divides $n$, then it divides one of $n/p$ for a $p\in\PP$ which divides $n$. Thus,
 $Y^d-1$ divides $Y^{n/p}-1$ for this prime number $p$.
\end{proof}
 The following two results are well known, but note that their demonstrations do not mention the roots of unity.


\begin{corollary} \label{coremPolcyc2}
If $p\in\PP$ divides $n$, we have $\Psi_{np}(Y)=\Psi_{n}(Y^p)$ and $\Phi_{np}(Y)=\Phi_{n}(Y^p)$.\footnote{Regarding the degrees, it corresponds to $\phi(np)=p\phi(n)$.} 
\end{corollary}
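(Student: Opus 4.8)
The plan is to prove the statement for $\Psi$ first and then deduce the one for $\Phi$ by dividing. For the $\Psi$ part I would use Lemma \ref{lemPolcyc2}, which expresses $\Psi_m$ as an lcm over the primes dividing $m$. Apply this to $m=np$. Since $p\mid n$, the set of primes dividing $np$ is exactly the set of primes dividing $n$. So $\Psi_{np}(Y)=\lcm_{q\mid n,\,q\in\PP}(Y^{np/q}-1)$. Now for each prime $q$ dividing $n$ we have $np/q=(n/q)p$, hence $Y^{np/q}-1=(Y^p)^{n/q}-1$. Thus $\Psi_{np}(Y)$ is the lcm of the polynomials $(Y^p)^{n/q}-1$ as $q$ runs over the primes dividing $n$.

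The next step is to recognize that this is precisely $\Psi_n(Y^p)$: again by Lemma \ref{lemPolcyc2}, $\Psi_n(Z)=\lcm_{q\mid n,\,q\in\PP}(Z^{n/q}-1)$, and substituting $Z=Y^p$ gives the lcm of the $(Y^p)^{n/q}-1$. To make the substitution legitimate at the level of lcm's (not merely up to associates), I would invoke the Substitution Lemma \ref{fact2} with $h(Y)=Y^p$ and the family $f_q(Z)=Z^{n/q}-1$ for $q\in\PP$, $q\mid n$; since everything is normalized to be monic, the lcm of the substituted family $F_q(Y)=f_q(Y^p)=(Y^p)^{n/q}-1$ equals $f(Y^p)$ where $f=\lcm_q f_q=\Psi_n$. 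This yields $\Psi_{np}(Y)=\Psi_n(Y^p)$ on the nose.

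For the $\Phi$ part I then divide. By Definition and notation \ref{defiPolcyc}, $\Phi_{np}(Y)=(Y^{np}-1)/\Psi_{np}(Y)$, and $Y^{np}-1=(Y^p)^n-1$. On the other hand, substituting $Y^p$ for the variable in the identity $\Phi_n(Z)=(Z^n-1)/\Psi_n(Z)$ (equivalently $\Phi_n(Z)\Psi_n(Z)=Z^n-1$ in $\QQ[Z]$, which is valid as a polynomial identity and so is preserved under the ring homomorphism $Z\mapsto Y^p$) gives $\Phi_n(Y^p)\Psi_n(Y^p)=(Y^p)^n-1=Y^{np}-1$. Combining with the $\Psi$ identity just proved, $\Phi_{np}(Y)\Psi_n(Y^p)=Y^{np}-1=\Phi_n(Y^p)\Psi_n(Y^p)$, and since $\QQ[Y]$ is an integral domain and $\Psi_n(Y^p)\neq 0$, we cancel to get $\Phi_{np}(Y)=\Phi_n(Y^p)$.

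The only slightly delicate point is the bookkeeping about lcm's being well defined only up to associates; this is handled entirely by the monic normalization together with the Substitution Lemma, which was proved precisely to guarantee that substitution commutes with the (normalized) lcm. Everything else is routine manipulation of divisibility and the defining formulas, so I do not expect any real obstacle.
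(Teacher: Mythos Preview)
Your proof is correct and follows essentially the same route as the paper: use Lemma~\ref{lemPolcyc2} together with the observation that the primes dividing $np$ coincide with those dividing $n$, invoke the Substitution Lemma~\ref{fact2} to get $\Psi_{np}(Y)=\Psi_n(Y^p)$, and then divide to obtain the statement for $\Phi$. Your treatment of the $\Phi$ part is slightly more explicit than the paper's, but the argument is the same.
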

%
\begin{proof}
Since $p$ divides $n$, a prime number $q$ divides $np$ \ssi $q$ divides $n$, so
\[
\Psi_{np}(Y)=\lcm_{q:q\in\PP,q\mid n}(Y^{p {n/q}}-1),
\]
which is obtained by substituting $Y^p$ for $Y$ in $\Psi_{n}(Y)=\lcm_{q:q\in\PP,q\mid n}(Y^{n/q}-1)$. By the substitution lemma, we have $\Psi_{np}(Y)=\Psi_{n}(Y^p)$. Consequently, $\Phi_{np}(Y)$ is obtained by substituting $Y^p$ for $Y$ in $(Y^n-1)/\Psi_{n}(Y)$.
 \end{proof}
%
\begin{corollary} \label{corcoremPolcyc2}
If every prime factor of a number $r$ divides $n$, then $\Phi_{nr}(Y)=\Phi_{n}(Y^r)$.
\end{corollary}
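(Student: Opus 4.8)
The plan is to deduce this from Corollary~\ref{coremPolcyc2} by induction on the number $k$ of prime factors of $r$, counted with multiplicity. Write $r=p_1\cdots p_k$ with each $p_i\in\PP$; by hypothesis every $p_i$ divides $n$. The base case $k=0$ (that is, $r=1$) is trivial, and the case $k=1$ is exactly Corollary~\ref{coremPolcyc2}.

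For the inductive step, write $r=r'p$ with $p=p_k\in\PP$ and $r'=p_1\cdots p_{k-1}$. Every prime factor of $r'$ is a prime factor of $r$, hence divides $n$, so the \hdr gives $\Phi_{nr'}(Y)=\Phi_n(Y^{r'})$. Since $p$ divides $n$, it divides $nr'$ as well, so Corollary~\ref{coremPolcyc2} applied with $nr'$ in place of $n$ yields $\Phi_{nr'p}(Y)=\Phi_{nr'}(Y^p)$. Finally, substituting $Y^p$ for $Y$ in the \hdr gives $\Phi_{nr'}(Y^p)=\Phi_n\big((Y^p)^{r'}\big)=\Phi_n(Y^{pr'})=\Phi_n(Y^r)$; combined with the previous equality this proves $\Phi_{nr}(Y)=\Phi_n(Y^r)$.

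The argument is entirely routine: there is no real obstacle, only the bookkeeping of checking at each stage that the divisibility hypothesis of Corollary~\ref{coremPolcyc2} is met, which is immediate because $p\mid n$ forces $p\mid nr'$. One could equivalently phrase the induction directly on $r$, peeling off one prime factor dividing $n$ at a time; the corollary for exponent $r'$ feeds into the corollary for exponent $r$ exactly as above.
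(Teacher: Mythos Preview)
Your proof is correct and follows exactly the approach the paper intends: the paper's own proof is the single sentence ``Results from the previous corollary using the decomposition of $r$ into prime factors,'' and your induction on the number of prime factors of $r$ simply spells this out. The only thing you add beyond the paper is the explicit check that $p\mid n$ implies $p\mid nr'$, which is indeed the one point worth noting.
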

%
\begin{proof} 
Results from the previous corollary using the decomposition of $r$ into prime factors.
\end{proof}

Thus, computing  $\Phi_n$ is thus reduced to the case where $n$ is an integer without square factors, product of distinct prime numbers. For example, with $n=360$,  we have $\Phi_{360}(Y)=\Phi_{180}(Y^2)=\Phi_{90}(Y^4)=\Phi_{30}(Y^{12})$.

\section{Crucial properties}

Once introduced the definition, the most important properties of cyclotomic polynomials will be demonstrated, without using the primitive roots of unity, using only our definition.

\begin{lemma} \label{lemcrucial} Let $d,e\geq 2$. Over $\ZY$, we have then:
\begin{enumerate}
\item if $d \divi e$, then $\Phi_d\vi (Y^e-1) =\Phi_d$,
\item if $d \nedivi e$, then $\Phi_d\vi (Y^e-1) =1$,
\item if $d\neq e$, then $\Phi_d\vi\Phi_e=1$.
\end{enumerate}
\end{lemma}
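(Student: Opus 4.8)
The plan is to work throughout in $\ZY$, which is a gcd domain by Proposition~\ref{propZXQX}, taking every gcd and lcm monic, and to reduce all three items to one key fact: $\Phi_d$ and $\Psi_d$ are coprime. I would obtain this by observing that $\Phi_d\Psi_d=Y^d-1$ is separable over $\QQ$ — its derivative $dY^{d-1}$ is coprime to it since $Y\nedivi Y^d-1$ — hence squarefree, so no irreducible factor can divide both $\Phi_d$ and $\Psi_d$; as $\Phi_d$ and $\Psi_d$ lie in $\ZY$ and are monic, thus primitive, this coprimality in $\QY$ transfers to $\ZY$ by point~\textsl{4} of Proposition~\ref{propZXQX}. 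I will also use repeatedly that $\Phi_n\divi Y^n-1$ (immediate, since $\Psi_n\divi Y^n-1$) and that $Y^{d'}-1\divi\Psi_d$ whenever $d'\divi d$ and $0<d'<d$ (such a $d'$ appears in the lcm defining $\Psi_d$, which is non-empty because $d\geq2$).

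Item~\textsl{1} is then immediate: if $d\divi e$ then $Y^d-1\divi Y^e-1$, and $\Phi_d\divi Y^d-1$ gives $\Phi_d\divi Y^e-1$, i.e.\ $\Phi_d\vi(Y^e-1)=\Phi_d$. For item~\textsl{2}, set $g=d\vi e$ (the integer gcd); since $d\nedivi e$, $g$ is a divisor of $d$ with $0<g<d$. By Lemma~\ref{fact1}, $(Y^d-1)\vi(Y^e-1)=Y^g-1$. Now $h:=\Phi_d\vi(Y^e-1)$ divides $\Phi_d$, hence $Y^d-1$, and divides $Y^e-1$; therefore $h$ divides $(Y^d-1)\vi(Y^e-1)=Y^g-1$, which in turn divides $\Psi_d$. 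So $h$ divides $\Phi_d\vi\Psi_d=1$, whence $h=1$.

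For item~\textsl{3}, assuming $d\neq e$, I would distinguish two cases. If $d\nedivi e$: since $\Phi_e\divi Y^e-1$, the gcd $\Phi_d\vi\Phi_e$ divides $\Phi_d\vi(Y^e-1)$, which is $1$ by item~\textsl{2}. If $d\divi e$: then $d$ is a divisor of $e$ with $0<d<e$, so $\Phi_d\divi Y^d-1\divi\Psi_e$, hence $\Phi_d\vi\Phi_e$ divides $\Psi_e\vi\Phi_e=1$. In both cases $\Phi_d\vi\Phi_e=1$.

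The only genuinely substantive ingredient is the coprimality of $\Phi_d$ and $\Psi_d$ — equivalently, that $Y^d-1$ has no repeated irreducible factor over $\QQ$; once that is in hand, items~\textsl{1}–\textsl{3} are pure divisibility bookkeeping combined with Lemma~\ref{fact1}. The only points requiring care are that each gcd/lcm manipulation is legitimate in the gcd domain $\ZY$ and that the monic normalisation turns the various "equal up to a unit" relations into genuine equalities; both are routine given Proposition~\ref{propZXQX}.
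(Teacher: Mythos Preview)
Your proof is correct and follows essentially the same approach as the paper: the key ingredient is the coprimality $\Phi_d\vi\Psi_d=1$ from separability of $Y^d-1$, and items~\textsl{1}--\textsl{2} are argued identically. For item~\textsl{3} the paper observes that $d\neq e$ forces one of $d\nedivi e$ or $e\nedivi d$ and then applies item~\textsl{2} directly by symmetry, whereas you split into $d\nedivi e$ versus $d\divi e$ and in the latter case invoke $\Phi_e\vi\Psi_e=1$; both routes are equally short and use the same ideas.
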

\begin{proof}
\textsl{1}. If $d\divi e$, then $\Phi_d$ divides $Y^d-1$, which divides $Y^e-1$.

\smallskip \noindent  
\textsl{2}. If $d\nedivi e$, then $d\vi e$ is a strict divisor of $d$.\\
Now $\Phi_d\vi (Y^e-1)$ divides $(Y^d-1)\vi(Y^e-1)=Y^{d\vi e}-1$, which divides $\Psi_d$ since $d\vi e$ is a strict divisor of $d$. 
So, $\Phi_d\vi (Y^e-1)$ divides both $\Phi_d$ and $\Psi_d$. Finally, since $\Phi_d\vi \Psi_d=1$ ($Y^n-1$ is separable),  $\Phi_d\vi (Y^e-1)$ must be equal to $1$.

\smallskip \noindent 
\textsl{3}.  One of the two does not divide the other. For example,
$d\nedivi e$. Then, according to point \textsl{2}, we have~$\Phi_d\vi(Y^e-1)=1$. Since $\Phi_e\divi Y^e-1$, we have $\Phi_d\vi\Phi_e=1$. 
\end{proof}

We now prove that our \dfn agrees with the second definition \pref{def2}.

\begin{theorem} \label{Mainth1}
Given an integer $n\geq 1$, the polynomial $Y^n-1$ in $\Z[Y]$ factors into pairwise coprime factors as
\[Y^n-1=\prod\limits_{d:d\mid n}\Phi_d(Y). \eqno{(1_n)}
\]
In particular, $\deg(\Phi_n)=\phi(n)$ (by induction, since $n=\som\limits_{d:d\mid n}\phi(d)$.)
\end{theorem}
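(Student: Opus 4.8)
The argument is by induction on $n$, the base case $n=1$ being the identity $Y-1=\Phi_1(Y)$ (a product with one factor, trivially coprime, and $\deg\Phi_1=1=\phi(1)$). Fix $n\ge2$ and assume $(1_m)$ — the factorization, the pairwise coprimality of its factors, and $\deg\Phi_m=\phi(m)$ — for every $m<n$. By Definition~\ref{defiPolcyc}, $Y^n-1=\Phi_n(Y)\,\Psi_n(Y)$ with $\Psi_n(Y)=(Y-1)\,\lcm_{d:\,1<d<n,\ d\mid n}\Gamma_d(Y)$, so the whole statement reduces to proving the single identity
\[\Psi_n(Y)=\prod_{d\mid n,\ d<n}\Phi_d(Y).\]
Granting this, $(1_n)$ follows at once; comparing degrees in $(1_n)$ and using the induction hypothesis $\deg\Phi_d=\phi(d)$ for $d<n$ gives $n=\sum_{d\mid n,\,d<n}\phi(d)+\deg\Phi_n$, which together with the classical $n=\sum_{d\mid n}\phi(d)$ yields $\deg\Phi_n=\phi(n)$.

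To establish the displayed identity I would apply the induction hypothesis to each proper divisor $d\ge2$ of $n$: $(1_d)$ reads $Y^d-1=\prod_{e\mid d}\Phi_e(Y)$, whence $\Gamma_d(Y)=(Y^d-1)/(Y-1)=\prod_{e\mid d,\ e>1}\Phi_e(Y)$. By the third point of Lemma~\ref{lemcrucial} the polynomials $\Phi_e$, $e\ge2$, are pairwise coprime, so each $\Gamma_d$ is the product of a sub-family of one fixed pairwise-coprime family of monic polynomials. Now I would use the elementary fact, valid in the gcd domain $\ZY$ with the monic normalization making it an equality: if $\phi_1,\dots,\phi_r$ are pairwise coprime and $S_1,\dots,S_k\subseteq\{1,\dots,r\}$, then
\[\lcm_{1\le i\le k}\Bigl(\prod_{j\in S_i}\phi_j\Bigr)=\prod_{j\in S_1\cup\cdots\cup S_k}\phi_j.\]
The left side divides the right, since each $\prod_{j\in S_i}\phi_j$ does; conversely each $\phi_j$ with $j\in\bigcup_i S_i$ divides some $\prod_{j'\in S_i}\phi_{j'}$, hence the lcm, and pairwise coprime divisors of an element have their product dividing that element. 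Applied with the $\phi_j$'s being the $\Phi_e$'s for $e$ ranging over the divisors of $n$ with $1<e<n$ and $S_d=\{e:\,1<e\mid d\}$ for $d$ a proper divisor of $n$, the union $\bigcup_d S_d$ is exactly the set of divisors $e$ of $n$ with $1<e<n$ (if $e\mid d\mid n$ with $d<n$ then $1<e<n$; conversely such an $e$ lies in $S_e$). Hence $\lcm_{d:\,1<d<n,\,d\mid n}\Gamma_d=\prod_{e\mid n,\ 1<e<n}\Phi_e$, and multiplying by $Y-1=\Phi_1$ gives $\Psi_n=\prod_{d\mid n,\ d<n}\Phi_d$.

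It remains to note the pairwise coprimality of the full family $\{\Phi_d:d\mid n\}$ occurring in $(1_n)$: for two divisors $\ge2$ this is the third point of Lemma~\ref{lemcrucial}, while $\Phi_1=Y-1$ is coprime to every $\Phi_e$ with $e\ge2$ — either because $\Phi_e$ divides $\Gamma_e$ and $\gcd(Y-1,\Gamma_e)=1$, or, more simply, because $Y^n-1$ is separable, so no monic factorization of it can carry a repeated irreducible factor.

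The one load-bearing step is the lcm computation: isolating and proving the "lcm of sub-products of a coprime family equals the product over the union of supports" fact, and verifying that this union is precisely the set of divisors of $n$ strictly between $1$ and $n$. Everything else is bookkeeping with Definition~\ref{defiPolcyc} and Lemma~\ref{lemcrucial}; and if one prefers to carry out the gcd/lcm arithmetic in the Bézout domain $\QY$ and descend afterwards, Lemma~\ref{lemPolcyc1} and Proposition~\ref{propZXQX} do the job, every $\Phi_d$ being monic, hence primitive.
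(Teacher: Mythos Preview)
Your proof is correct and follows essentially the same route as the paper: induction on $n$, reducing to the identity $\Psi_n=\prod_{d\mid n,\,d<n}\Phi_d$, then using the pairwise coprimality from Lemma~\ref{lemcrucial} to convert the defining lcm into a product. The only cosmetic differences are that the paper works directly with $\Psi_n=\lcm_{d\mid n,\,d<n}(Y^d-1)$ rather than splitting off $Y-1$ and passing through the $\Gamma_d$'s, and that where you isolate and prove the ``lcm of sub-products of a coprime family equals product over the union of supports'' lemma, the paper instead observes that each $Y^d-1$ is both the product and (by separability) the lcm of its $\Phi_h$'s, so $\Psi_n=\lcm_{d\mid n,\,d<n}\Phi_d$ directly --- after which coprimality turns lcm into product in one stroke. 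Your handling of the $\Phi_1$-versus-$\Phi_e$ coprimality is in fact slightly more careful than the paper's, since Lemma~\ref{lemcrucial} is stated only for $d,e\ge2$.
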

%
\begin{proof} 
We will use induction on $n$. When $n=1$ or $n$ is a prime number, the factorization $(1_n)$ is obvious. Now assume that the \prt holds for all the strict divisors $d$ of $n$. We then observe that $\Psi_n$ is the lcm of all $\Phi_d$, $d$ strict divisor of $n$.  Indeed, by the induction assumption, for every divisor $d$, the polynomial $Y^d-1$ is product of all the polynomials $\Phi_h$, $h$ divisor of $d$, and it is also their lcm because $Y^d-1$ is \spl.

Thus we have
\[
Y^n-1=\Psi_n(Y)\,\Phi_n(Y)=
\bigg(\lcm\limits_{d:d\mid n,1\leq d<n}(Y^d-1)\bigg)\,\Phi_n(Y)=
\bigg(\lcm\limits_{d:d\mid n,1\leq d<n}\Phi_d(Y)\bigg)
\,\Phi_n(Y). 
\]
In the first factor of the last product, the polynomials are pairwise coprime 
according to Lemma \ref{lemcrucial}, so their lcm is equal to their product, which completes the proof.

Observe that $\Phi_n$ is also coprime to   $\Phi_d(Y)$ when $d$ divides strictly $n$ because  $Y^n-1$ is \spl.  
\end{proof}

Undoubtedly, one of the most famous results is the irreducibility of $\Phi_n$; see various proofs in \cite{Wei}. Here we provide one more.

%
\begin{theorem} \label{Mainth2}
Given $n\geq 1$, the polynomial $\Phi_n$ is irreducible over $\QQY$.
\end{theorem}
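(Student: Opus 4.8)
The plan is to follow the classical Dedekind--Kronecker strategy of reduction modulo primes, but carried out constructively and, crucially, using only the properties of $\Phi_n$ already established --- namely the factorization $Y^n-1=\prod_{d\mid n}\Phi_d$ of Theorem \ref{Mainth1}, the coprimality relations of Lemma \ref{lemcrucial}, and the Gauss-lemma machinery of Proposition \ref{propZXQX}. First I would reduce to the case $n$ squarefree using Corollary \ref{corcoremPolcyc2}: if $n=n_0 r$ where $n_0$ is the radical of $n$ and every prime factor of $r$ divides $n_0$, then $\Phi_n(Y)=\Phi_{n_0}(Y^r)$, and one checks that a factorization of $\Phi_n$ would descend to one of $\Phi_{n_0}$ (since $\deg\Phi_n=r\deg\Phi_{n_0}$ by the degree formula, and substitution $Y\mapsto Y^r$ is injective on polynomials). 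Actually, to keep things clean, I would instead argue directly for all $n$ and invoke the substitution only where it genuinely shortens matters.

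The core argument: let $\alpha=y_n$ be the image of $Y$ in $\Qn=\QQ[Y]/\gen{\Phi_n}$, and let $g\in\ZZ[Y]$ be the (primitive, monic, by Proposition \ref{propZXQX}) irreducible factor of $\Phi_n$ in $\ZZ[Y]$ with $g(\alpha)=0$; write $\Phi_n=g\,h$ with $h\in\ZZ[Y]$ monic. The goal is to show $h$ is constant. The key lemma is: \emph{if $\zeta$ is any root of $g$ (in any extension) and $p$ is a prime not dividing $n$, then $\zeta^p$ is again a root of $g$}. Granting this, iterating shows every $\zeta^k$ with $\gcd(k,n)=1$ is a root of $g$; since $\alpha$ itself is such a power base and every primitive-type root of $Y^n-1$ arising from $\Phi_n$ is reached this way, $g$ has at least $\phi(n)=\deg\Phi_n$ roots among the roots of $\Phi_n$, forcing $\deg g\geq\deg\Phi_n$, i.e.\ $h$ constant. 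To make "roots" rigorous and constructive here I would work inside $\Qn$ itself (or a further quotient), use Lemma \ref{fact3} (Roots and Decomposition) together with the fact that $Y^n-1$ is separable over $\QQ$ (so distinct $\zeta^k$ are "separated", $\zeta^k-\zeta^\ell$ invertible), and phrase the counting as: the polynomial $g$ is divisible by $\prod(Y-\zeta^k)$ over a suitable ring, hence has degree $\geq\phi(n)$.

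The heart --- and the step I expect to be the main obstacle --- is the prime-power lemma. The classical proof goes: suppose $\zeta^p$ is not a root of $g$; then it is a root of $h$ (since $\zeta^p$ is a root of $\Phi_n$, because $\gcd(p^{?},n)$ considerations show $\zeta^p$ still has the same order), so $\zeta$ is a root of $h(Y^p)$, hence $g(Y)\mid h(Y^p)$ in $\ZZ[Y]$ (as $g$ is the minimal polynomial of $\zeta$ over $\QQ$ and is primitive). Reduce mod $p$: in $\FF_p[Y]$ one has $h(Y^p)\equiv h(Y)^p$ by the Frobenius, so $\bar g\mid \bar h^p$, whence $\bar g$ and $\bar h$ share a common factor in $\FF_p[Y]$; therefore $\overline{\Phi_n}=\bar g\bar h$ has a repeated factor mod $p$, so $\overline{Y^n-1}$ has a repeated factor mod $p$ --- contradicting $\gcd(nY^{n-1},Y^n-1)=1$ mod $p$ (valid since $p\nmid n$). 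Constructively I must avoid speaking of "the root $\zeta$ in $\overline{\QQ}$"; instead I would introduce $\zeta$ as the image of $Y$ in $\ZZ[Y]/\gen{g}$ (a domain, since $g$ is irreducible), carry the divisibility $g\mid h(Y^p)$ purely as a polynomial statement deduced from $g(\zeta^p)$ lying in an ideal, and replace "repeated factor mod $p$" by the explicit statement that $\gcd_{\FF_p[Y]}(\overline{Y^n-1},\,n\,Y^{n-1})$ is a unit, which is a finite Euclidean-algorithm computation. The delicate constructive point is justifying "$\zeta^p$ is a root of $\Phi_n$, hence of $g$ or of $h$" without excluded middle: here I would not split, but rather observe that $g(\zeta^p)\cdot h(\zeta^p)=\Phi_n(\zeta^p)=0$ in the domain $\ZZ[Y]/\gen g$, so if $g(\zeta^p)\neq 0$ then $h(\zeta^p)=0$; and the case $g(\zeta^p)=0$ is exactly what we want, so the argument is genuinely constructive provided the ring $\ZZ[Y]/\gen g$ has a decidable equality, which it does because $g$ is an explicit monic polynomial over $\ZZ$. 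Assembling these pieces, together with the reduction to squarefree $n$ and the root-count, completes the proof.
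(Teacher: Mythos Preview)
Your overall strategy is sound and close to the paper's, but two points deserve correction. First, the squarefree reduction is backwards: from $\Phi_n(Y)=\Phi_{n_0}(Y^r)$ you can only conclude that a factorization of $\Phi_{n_0}$ yields one of $\Phi_n$, not the converse (irreducibility of $f$ does not imply irreducibility of $f(Y^r)$ in general). You rightly abandon this, so no harm done. Second, the opening setup is confused: in $\QQ_n=\QQ[Y]/\gen{\Phi_n}$ the element $\alpha=y_n$ has \emph{minimal} polynomial $\Phi_n$, so no proper factor $g$ can satisfy $g(\alpha)=0$ there. You must work from the start in $\QQ[Y]/\gen{g}$ with $\zeta$ the class of $Y$, as you eventually do.

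The paper's argument is organized differently and is worth comparing. Rather than taking an \emph{irreducible} factor $g$ and arguing by a decidable case split plus contradiction, the paper takes an \emph{arbitrary} monic divisor $f$ of $\Phi_n$ of degree $\geq 1$ and proves directly that $f(Y)\mid f(Y^p)$ for every prime $p\nmid n$: write $Y^n-1=f(Y)g(Y)$, so $f(Y)\mid f(Y^p)g(Y^p)$; then over $\FF_p$ one has $\bar f\wedge\bar g=1$ (separability) and $\bar g(Y^p)=\bar g(Y)^p$, hence $\bar f\wedge\bar g(Y^p)=1$, which lifts to $f\wedge g(Y^p)=1$ in $\ZZ[Y]$, giving $f\mid f(Y^p)$ with no contradiction and no appeal to $f$ being irreducible. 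Specializing at $y=Y\bmod f$ in $\QQ_f$ then gives $f(y^r)=0$ for all $r$ coprime to $n$, and since the $y^r-y^{r'}$ are units (Lemma \ref{lemcrucial} plus B\'ezout), Lemma \ref{fact3} forces $\deg f\geq\phi(n)$.

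What this buys: the paper never needs to factor $\Phi_n$ into irreducibles, never needs $\QQ[Y]/\gen{g}$ to be a domain, and never enters a contradiction branch. Your version is the classical Dedekind proof made constructive by decidability of equality in $\ZZ[Y]/\gen{g}$; it works, but it leans on the irreducibility of $g$ (to get $g\mid h(Y^p)$ from $h(\zeta^p)=0$) and on a case analysis that the paper's direct divisibility lemma sidesteps entirely.
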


%
\begin{proof}[Proof - Beginning] Obviously, $\Phi_1$ is irreducible.  
Let $n\geq 2$ and let $f$ be a monic \pol of degree $\geq 1$ which divides $\Phi_n$ in $\QQY$. The theorem will follow once we prove that $f=\Phi_n$.
Observe that $f\in\ZZY$ and it is enough to prove that $\deg(f)\geq\phi(n)$. We consider the \QQlg $\QQ_f=\aqo\QQY{f}$. Let $y$ be $Y$ modulo $f$, hence $\QQ_f=\QQy$. We will begin by establishing the following two lemmas.
%
\begin{lemma} \label{lem1Mainth2}
We have $Y^n-1=\prod\limits_{0\leq r<n}(Y-y^r)$ in $\QQ_f[Y]$. 
\end{lemma}
%
\begin{proof} 

Since $f$ divides $Y^n-1$, we have $y^n=1$. So the $y^r$ are zeros of $Y^n-1$. Hence, according to Lemma \ref{fact3}, it suffices to see that the $y^r-y^s$, $n>r>s$, are invertible in $\QQ_f$. Let us write $r=s+t$ and $y^r-y^s=y^s(y^t-1)$. Since $n$ does not divide~$t$ and $f$ divides $\Phi_n$, we have $(Y^t-1)\vi f(Y)=1$ by point \textsl{2} of Lemma \ref{lemcrucial}. The Bézout relation in $\QQ_f[Y]$ for $(Y^t-1)\vi f(Y)=1$, specialized in $y$, shows that \hbox{ $y^t-1$} is invertible in $\QQ_f$.  
\end{proof}
%

\begin{lemma} \label{lem2Mainth2}
If $r\vi n=1$ and $n\geq 2$, then $f(Y)$ divides $f(Y^r)$ in $\QQY$ (or in $\ZZY$).
\end{lemma}
%
\begin{proof} 
Observe that if $f(Y)$ divides $f(Y^r)$, then, by substitution $f(Y^k)$ divides $f(Y^{kr})$ for all $k$. Therefore, to prove the result, it suffices to consider the case $r=p$, a prime number which does not divide $n$.

We write $Y^n-1 = f(Y)g(Y)$ in $\ZY$, so $Y^{np}-1 = f(Y^p)g(Y^p)$. Since $(Y^n-1) \mid (Y^{np}-1)$, we have $f(Y) \mid f(Y^p)g(Y^p)$. In $\Fp[Y]$, since $p\nedivi n$, we have $\ov f(Y)\vi \ov g(Y) = \ov1$ and so $\ov f(Y)\vi \ov g(Y)^p = \ov1$. Moreover, $\ov g(Y)^p = \ov g(Y^p)$, which implies $\ov f(Y) \vi \ov g(Y^p) = \ov 1$, and a fortiori $f(Y) \vi g(Y^p) = 1$ in $\ZZY$. This shows that $f(Y)\mid f(Y^p)$ and the result is proved.
\end{proof}
%

%
\noindent \textsl{End of proof of  \thref{Mainth2}.} 
According to the previous lemma, by specializing $Y$ in~$y$, we obtain that $f(y^r)=0$ for
$r<n$, coprime with $n$. But the $y^r-y^{r'}$ are invertible in~$\QQ_f$ (already proved), so $f(Y)$ is multiple of $\prod\limits_{r:1\leq r<n,r\vi n=1}(Y-y^r)$
by Lemma \ref{fact3}. Thus, $\deg(f)\geq \phi(n)$ and it follows that $f=\Phi_n$, $y=y_n$, $\Qn=\QQ_f$ and
\[f(Y)=\Phi_n(Y)=\prod\limits_{r:1\leq r<n,r\vi n=1}(Y-y_n^r).
\] 

\end{proof}

\noindent \textsl{Remark}.  More precisely. 
\begin{itemize}
\item The same proof as Lemma \ref{lem1Mainth2} provides $Y^n-1=\prod\limits_{0\leq r<n}(Y-y_n^r)$ in $\Qn[Y]$.

\item The proof of Lemma \ref{lem2Mainth2} actually says that if a monic polynomial  $F\in \ZZY$ divides $Y^n-1$ and if $r\vi n = 1$, then $F(Y)$ divides $F(Y^r)$ in $\QQY$. \eoe
\end{itemize}

Finally, the following corollary connects our definition with the classical definition of cyclotomic polynomials.
\begin{corollary} \label{corMainths0}
Let $\gK$ be an extension field of $\QQ$ which contains a primitive $n$-th root of unity, denoted $\xi$. Then the subfield $\QQ[\xi]$ is isomorphic to $\Qn$. Such an isomorphism $\varphi\colon \QQ[\xi] \to \Qn$ is determined by the image of $\xi$, which is any $n$-th primitive root of unity in $\Qn$.
\end{corollary}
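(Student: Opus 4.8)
The plan is to transfer structure across the isomorphism built from \thref{Mainth2}. First I would recall that, by \thref{Mainth2}, $\Phi_n$ is irreducible over $\QQ$, so $\Qn=\aqo{\QQ[Y]}{\Phi_n}$ is a field, and moreover (by the Remark following that proof) $Y^n-1=\prod_{0\le r<n}(Y-y_n^r)$ in $\Qn[Y]$, with the $y_n^r-y_n^s$ invertible for $n>r>s$. In particular $y_n$ has multiplicative order exactly $n$ in $\Qn^\times$: it is an $n$-th root of unity since $y_n^n=1$, and if $y_n^d=1$ for a strict divisor $d$ of $n$, then $Y-1$ would divide both $Y^d-1$ and the product $\prod_{0<r<n}(Y-y_n^r)$ after evaluation, contradicting invertibility of $y_n^r-y_n^0$. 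So $y_n$ is a primitive $n$-th root of unity in $\Qn$, and the roots that are themselves primitive are exactly the $y_n^r$ with $r\vi n=1$, these being the roots of $\Phi_n$ (again by the displayed factorization $f=\Phi_n=\prod_{r\vi n=1}(Y-y_n^r)$ in the proof of \thref{Mainth2}).

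Next I would set up the isomorphism. Let $\xi\in\gK$ be a primitive $n$-th root of unity. Since $\xi^n=1$ but $\xi^d\neq1$ for every strict divisor $d$ of $n$, the element $\xi$ is a root of $Y^n-1$ but of none of the $Y^d-1$ with $d\divi n$, $d<n$; since $\Psi_n=\lcm_{d:0<d<n,d\mid n}(Y^d-1)$, it follows that $\Psi_n(\xi)\neq0$ while $(\xi^n-1)=\Psi_n(\xi)\Phi_n(\xi)=0$, hence $\Phi_n(\xi)=0$. (One must check $\Psi_n(\xi)\neq0$: if it vanished, then since $\Psi_n$ is, up to the unit scaling, the lcm of the $Y^d-1$, and $\gK$ is an integral domain where $(Y^m-1)\vi(Y^{m'}-1)=Y^{m\vi m'}-1$ by Lemma \ref{fact1}, a short argument shows any root of $\Psi_n$ is a root of some $Y^d-1$ with $d$ a strict divisor of $n$.) Since $\Phi_n$ is the minimal polynomial of $\xi$ over $\QQ$ (it is monic, irreducible, and annihilates $\xi$), the evaluation homomorphism $\QQ[Y]\to\QQ[\xi]$, $Y\mapsto\xi$, has kernel $\gen{\Phi_n}$ and induces an isomorphism $\Qn=\aqo{\QQ[Y]}{\Phi_n}\isosim\QQ[\xi]$ sending $y_n\mapsto\xi$.

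Finally I would address the statement about \emph{which} isomorphisms arise. A $\QQ$-algebra homomorphism $\varphi\colon\QQ[\xi]\to\Qn$ is entirely determined by $\varphi(\xi)$, and since $\Phi_n(\xi)=0$ we need $\Phi_n(\varphi(\xi))=0$, i.e.\ $\varphi(\xi)$ must be a root of $\Phi_n$ in $\Qn$; conversely, for any root $\eta$ of $\Phi_n$ in $\Qn$, the assignment $\xi\mapsto\eta$ extends to a well-defined $\QQ$-algebra homomorphism $\QQ[\xi]\cong\aqo{\QQ[Y]}{\Phi_n}\to\Qn$ because $\Phi_n$ annihilates $\eta$, and it is an isomorphism since both sides are fields of the same finite dimension $\phi(n)=\deg\Phi_n$ over $\QQ$ and a nonzero field homomorphism is injective hence bijective by dimension count. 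By the first paragraph, the roots of $\Phi_n$ in $\Qn$ are precisely the $y_n^r$ with $1\le r<n$, $r\vi n=1$, which are exactly the primitive $n$-th roots of unity in $\Qn$; this gives the bijection between isomorphisms $\QQ[\xi]\to\Qn$ and primitive $n$-th roots of unity in $\Qn$.

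The main obstacle I anticipate is the clean constructive verification that $\Phi_n(\xi)=0$, i.e.\ that $\Psi_n(\xi)\neq0$: this is where one uses Lemma \ref{fact1} together with the fact that $\xi$, being primitive, fails to annihilate any $Y^d-1$ for a proper divisor $d$ of $n$, so that it cannot annihilate their lcm $\Psi_n$ either. Everything else is the standard "minimal polynomial $\Rightarrow$ quotient isomorphism" machinery, plus the dimension count $\dim_\QQ\Qn=\phi(n)$ supplied by \thref{Mainth1}.
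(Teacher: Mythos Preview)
Your proof is correct, and the overall architecture (show $\Phi_n(\xi)=0$, invoke irreducibility to identify the minimal polynomial, then read off the isomorphism) matches the paper. The one genuine difference is in how you reach $\Phi_n(\xi)=0$. You go through $Y^n-1=\Psi_n\,\Phi_n$ and must then rule out $\Psi_n(\xi)=0$, which forces you into the ``roots of an lcm are the union of the roots of the factors'' argument; this is valid, but as you yourself flag, it is the one spot that is not quite immediate. The paper instead appeals directly to \thref{Mainth1}: from $0=\xi^n-1=\prod_{d\mid n}\Phi_d(\xi)$, together with the observation that for each strict divisor $d$ of $n$ the polynomial $\Phi_d$ divides $Y^d-1$ while $\xi^d-1\neq 0$ in the field $\gK$, one gets $\Phi_d(\xi)$ invertible for all such $d$, hence $\Phi_n(\xi)=0$ in one line. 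This sidesteps the lcm discussion entirely and is the cleaner route here. On the other hand, you give a fuller treatment of the second sentence of the corollary---parametrising the isomorphisms $\QQ[\xi]\to\Qn$ by the primitive $n$-th roots in $\Qn$, with existence and bijectivity checked via the dimension count $\dim_\QQ\Qn=\phi(n)$---which the paper leaves essentially implicit.
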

%
\begin{proof} 
By hypothesis, $\xi$ satisfies $Y^n-1=0$ but it is not root of any $Y^d-1$,  
if $d$ strictly divides $n$. In particular, $\Phi_d(\xi)$ is invertible in $\gK$.
The equality $\xi^n-1=\prod_{d \mid n}\Phi_d(\xi)$ then shows that $\Phi_n(\xi)=0$. Since $\Phi_n(Y)$ is irreducible in $\QQ$, it is the minimal polynomial of $\xi$ over $\QQ$.
\end{proof}
%

\begin{corollary} \label{corMainths} Let $y_n$ be $Y$ modulo $\Phi_n$. 
If $n$ is even, then the order of the group of roots of unity of $\Qn$ is equal to $n$, generated by $y_n$. If $n$ is odd, then the order of the group of roots of unity of $\Qn$ is equal to $2n$, generated
by $-y_n$. 
\end{corollary}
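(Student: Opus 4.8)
The whole argument will use only facts already available. By \thref{Mainth2} the ring $\Qn=\aqo{\Q[Y]}{\Phi_n}$ is a field, of degree $[\Qn:\QQ]=\deg\Phi_n=\phi(n)$ over $\QQ$ by \thref{Mainth1}, and by the Remark following \thref{Mainth2} one has $Y^n-1=\prod_{0\le r<n}(Y-y_n^r)$ in $\Qn[Y]$ with the $y_n^r$ pairwise distinct (their differences $y_n^s(y_n^t-1)$ being invertible, exactly as in the proof of Lemma~\ref{lem1Mainth2}). Hence $y_n$ is a root of unity of order exactly $n$; and when $n$ is odd, $-y_n=(-1)\,y_n$ is a product of two commuting roots of unity of coprime orders $2$ and $n$, so it has order $2n$. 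Writing $\mu$ for the group of roots of unity of $\Qn$, the goal is to prove $\mu=\gen{y_n}$ when $n$ is even and $\mu=\gen{-y_n}$ when $n$ is odd.

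The core step is a bound on the order of an arbitrary $\zeta\in\mu$. Let $m$ be its order and set $N=\lcm(n,m)$. In the abelian group $\Qn\eti$ the elements $y_n$ and $\zeta$, of orders $n$ and $m$, produce in the usual way an element $\omega$ of order $N$, so $\Qn$ contains a primitive $N$-th root of unity. Applying Corollary~\ref{corMainths0} to the extension $\QQ\subseteq\Qn$ gives $\QQ[\omega]\simeq\Q_N$, whence $\phi(N)=[\Q_N:\QQ]=[\QQ[\omega]:\QQ]$ divides $[\Qn:\QQ]=\phi(n)$; as conversely $n\divi N$ forces $\phi(n)\divi\phi(N)$, we get $\phi(N)=\phi(n)$.

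Next I would record an elementary fact about the Euler function: \emph{if $n\divi N$ and $\phi(N)=\phi(n)$, then $N=n$, or $n$ is odd and $N=2n$}. This follows from the identity $\phi(N)/\phi(n)=(N/n)\prod_{p\in\PP,\,p\divi N,\,p\nedivi n}\tfrac{p-1}{p}$: the product of the primes dividing $N$ but not $n$ divides $N/n$, so the right-hand side is a positive integer multiple of $\prod_{p\divi N,\,p\nedivi n}(p-1)$, and for it to equal $1$ the set of primes dividing $N$ but not $n$ must be empty (forcing $N=n$) or equal to $\{2\}$ (forcing $n$ odd and $N=2n$).

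Assembling the pieces: if $n$ is even, the fact forces $N=n$, hence $m\divi n$ and $\zeta^n=1$, for every $\zeta\in\mu$; but in the domain $\Qn$ the only solutions of $Y^n-1=0$ are $y_n^0,\dots,y_n^{n-1}$ by the factorisation above, so $\mu=\gen{y_n}$ has order $n$. If $n$ is odd, the fact gives $N\in\{n,2n\}$, hence $m\divi 2n$ and $\zeta^{2n}=1$; since $\gen{-y_n}$ consists of $2n$ distinct roots of $Y^{2n}-1$ and a degree-$2n$ polynomial over the field $\Qn$ has at most $2n$ roots, $\gen{-y_n}$ is exactly the set of roots of $Y^{2n}-1$ in $\Qn$, whence $\mu=\gen{-y_n}$ has order $2n$. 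The only step carrying genuine content is the Euler-function fact; the rest is bookkeeping with tools already in hand, and I do not expect a real obstacle — the single point to watch is that Corollary~\ref{corMainths0} is being applied with the integer $N$ playing the role of $n$, which is legitimate since its statement allows an arbitrary extension field of $\QQ$.
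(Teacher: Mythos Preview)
Your proof is correct and follows essentially the same approach as the paper: embed a putative larger root-of-unity group via Corollary~\ref{corMainths0} and compare Euler-function values to force a contradiction. The only structural difference is that for odd $n$ the paper observes $\Qn\simeq\QQ_{2n}$ (since $-y_n$ has order $2n$ and $\phi(2n)=\phi(n)$) and then invokes the even case, whereas you handle both parities uniformly through your explicit Euler-function lemma; your version is more self-contained but otherwise the same argument.
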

%
\begin{proof}
Suppose first that $n$ is even. If the group of roots of unity were strictly larger, it would contain an element $y$ of order $nk$, $k>1$. This $y$ would generate a subfield isomorphic to $\Q_{nk}$ (Corollary \ref{corMainths0}) of dimension $\phi(nk)$. Since $n$ is even, we would have $\phi(nk)>\phi(n)$.

If $n$ is odd, the order of $-y_n$ is $2n$ in $\Qn$, so $\Qn=\QQ_{2n}$ and the group of roots of unity cannot be of order greater that $2$n according to the even case. 
\end{proof}

%

\begin{corollary} \label{corMainths3}
Let $n$ be an even integer. The field $\Qn$ contains a subfield isomorphic to~$\Q_d$ \ssi $d$ divides $n$.  
\end{corollary}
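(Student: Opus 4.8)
The plan is to deduce the equivalence from the structure of the group of roots of unity of $\Q_n$ recorded in Corollary~\ref{corMainths}: since $n$ is even, that group is cyclic of order exactly $n$ and is generated by $y_n$. Everything then hinges on translating ``$\Q_n$ has a subfield isomorphic to $\Q_d$'' into ``$\Q_n$ contains a primitive $d$-th root of unity'', and the latter into ``$d\divi n$''.

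For the implication $d\divi n\Rightarrow\Q_d\hookrightarrow\Q_n$ I would argue directly. Put $\eta:=y_n^{\,n/d}$, an element of the field $\Q_n$. Since $y_n$ has order $n$ in the multiplicative group of $\Q_n$ and $n/d$ divides $n$, the element $\eta$ has order $n/\gcd(n,n/d)=d$; in particular $\eta^d=1$ while $\eta^e\neq 1$ for every proper divisor $e$ of $d$, so $\eta$ is a primitive $d$-th root of unity lying in $\Q_n$. Applying Corollary~\ref{corMainths0} with $\gK:=\Q_n$ and with $d$ playing the role of $n$, I get that $\QQ[\eta]$ is a subfield of $\Q_n$ isomorphic to $\Q_d$.

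For the converse I would start from an isomorphism $\psi\colon\Q_d\to\gL$ onto a subfield $\gL\subseteq\Q_n$ and set $\eta:=\psi(y_d)$. Since $\psi$ fixes $\QQ$ pointwise and $\Phi_d$ has rational coefficients, $\Phi_d(\eta)=\psi(\Phi_d(y_d))=0$; hence $\eta^d=1$ because $\Phi_d\divi(Y^d-1)$. To see that $\eta$ has order exactly $d$, I would use point~\textsl{2} of Lemma~\ref{lemcrucial}: for a proper divisor $e$ of $d$ we have $d\nedivi e$, so $\Phi_d\vi(Y^e-1)=1$ in $\ZZ[Y]$, and specializing a Bézout identity for this equality at $Y=\eta$ shows $\eta^e-1$ is invertible in $\Q_n$, in particular $\eta^e\neq 1$. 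Thus $\eta$ is a root of unity of order $d$ in $\Q_n$, whose group of roots of unity has order $n$ by Corollary~\ref{corMainths}; therefore $d\divi n$.

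I expect no serious obstacle. The one point requiring care is the converse, where I must make sure the abstract isomorphism genuinely produces an element of $\Q_n$ of order exactly $d$ (not merely of order dividing $d$); the coprimality of $\Phi_d$ and $Y^e-1$ furnished by Lemma~\ref{lemcrucial} is precisely what delivers this constructively. The rest is routine bookkeeping with cyclic groups, and the hypothesis that $n$ is even is used — and is needed — exactly through Corollary~\ref{corMainths} (for odd $n$ one gets $2n$ instead, and indeed $\Q_3\supseteq\QQ=\Q_2$ with $2\nedivi 3$).
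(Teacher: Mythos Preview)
Your proposal is correct and is precisely the argument the paper leaves implicit: the corollary is stated without proof, as an immediate consequence of Corollaries~\ref{corMainths0} and~\ref{corMainths}, and you have spelled out exactly that deduction. Your care in verifying constructively that $\psi(y_d)$ has order \emph{exactly} $d$ via the Bézout identity from Lemma~\ref{lemcrucial} is appropriate in the spirit of the paper, and your closing remark on why evenness of $n$ is needed (with the counterexample $\Q_2=\QQ\subseteq\Q_3$) is apt.
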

\begin{corollary} \label{corMainths4}
If $n$ is odd $\geq 3$, then $\Phi_{2n}(Y)= \Phi_{n}(-Y)$. 
\end{corollary}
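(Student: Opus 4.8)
The plan is to deduce the identity from two facts already established: for odd $n$ the element $-y_n$ is a primitive $2n$-th root of unity inside the field $\Qn$ (Corollary~\ref{corMainths}), and the minimal polynomial over $\QQ$ of any primitive $2n$-th root of unity is $\Phi_{2n}$ (Corollary~\ref{corMainths0}). Since $y_n$ has minimal polynomial $\Phi_n$, the element $-y_n=-(y_n)$ is a zero of $\Phi_n(-Y)$, so $\Phi_{2n}(Y)$ divides $\Phi_n(-Y)$ in $\QQ[Y]$; a degree count forces equality up to a constant, and a small parity argument fixes that constant to be $1$.

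In detail, I would first invoke Corollary~\ref{corMainths}: as $n$ is odd, the group of roots of unity of $\Qn=\QQ[y_n]$ has order $2n$ and is generated by $-y_n$, so $-y_n$ is a primitive $2n$-th root of unity. Applying Corollary~\ref{corMainths0} with $\gK=\Qn$ and $\xi=-y_n$ (that is, with $2n$ in the role of $n$) shows that $\Phi_{2n}$ is the minimal polynomial of $-y_n$ over $\QQ$. In particular $\deg\Phi_{2n}=[\QQ[-y_n]:\QQ]=[\Qn:\QQ]=\deg\Phi_n$, i.e. $\phi(2n)=\phi(n)$, using $\QQ[-y_n]=\QQ[y_n]=\Qn$ together with $\deg\Phi_m=\phi(m)$ from Theorem~\ref{Mainth1}.

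Next, from $\Phi_n(y_n)=0$ I get that $-y_n$ is a zero of the polynomial $P(Y):=\Phi_n(-Y)\in\QQ[Y]$, which has degree $\phi(n)$. Because $\Phi_{2n}$ is the minimal polynomial of $-y_n$, it divides $P$ in $\QQ[Y]$; and since $\deg P=\phi(n)=\phi(2n)=\deg\Phi_{2n}$, the quotient is a nonzero constant $c$ with $P=c\,\Phi_{2n}$. Finally, for odd $n\ge 3$ the integer $\phi(n)$ is even — the map $r\mapsto n-r$ is a fixed-point-free involution of the set of integers in $[1,n)$ that are coprime to $n$ — so the leading coefficient of $P(Y)=\Phi_n(-Y)$ equals $(-1)^{\phi(n)}=1$; comparing with the monic $\Phi_{2n}$ gives $c=1$, hence $\Phi_n(-Y)=\Phi_{2n}(Y)$.

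No step is really an obstacle: the argument is a short chain of consequences of the corollaries above. The only care points are the degree bookkeeping — equivalently the equality $\phi(2n)=\phi(n)$ for odd $n$, which falls out of $\Qn\cong\Q_{2n}$ — and the parity of $\phi(n)$, which is needed precisely to exclude the sign $-1$; note that this is exactly where the hypothesis $n\ge 3$ enters, since for $n=1$ one has $\phi(1)=1$ and indeed $\Phi_1(-Y)=-Y-1\ne Y+1=\Phi_2(Y)$.
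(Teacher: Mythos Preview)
Your proof is correct. The paper does not supply its own proof of this corollary, leaving it as an immediate consequence of Corollaries~\ref{corMainths0} and~\ref{corMainths}; your argument fills in precisely the details one would expect along those lines, including the parity check on $\phi(n)$ that pins down the leading coefficient and explains why the hypothesis $n\ge 3$ is needed.
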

%
\medskip \noindent 
{\bf Conclusion.} The ``intrinsic'' approach we propose seems more elementary than the usual, more learned approach that uses the existence of a splitting field for the polynomial $Y^n-1$. However, although it is quite easy to prove that any group of roots of unity in a field is necessarily cyclic, we regret not having been able to exploit this fact to radically simplify the proof of the irreducibility of $\Phi_n$.

\bibliographystyle{plainnat}

\endgroup

\clearpage
\newpage
\thispagestyle{empty}

~

\clearpage
\newpage
\thispagestyle{empty}

\renewcommand\thepage{F\arabic{page}}\renewcommand\theHsection{F\arabic{section}}

\clearpage
\setcounter{page}{1} 
\setcounter{section}{0}
\setcounter{subsection}{0}
\setcounter{equation}{0}

\selectlanguage{french}
\def\frenchproofname{\textsl{Démonstration}}

\FrenchFootnotes



\newtheorem{ftheorem}{Théorème}[section]
\newtheorem{flemma}[ftheorem]{Lemme}
\newtheorem{fcorollary}[ftheorem]{Corolaire}
\newtheorem{fproposition}[ftheorem]{Proposition}

\theoremstyle{definition}
\newtheorem{fdefinition}[ftheorem]{Définition}
\newtheorem{fdefinota}[ftheorem]{Définition et notation}


\newcommand{\vou}{\MA{\tsbf{ ou }}}
\newcommand{\Vou}{\MA{\tsbf{OU}}}
\newcommand \EXists[1] {\tsbf{Introduire }{#1}\tsbf{ tel que }\,}
\newcommand \vet {{\tsbf{,}}\,}
\newcommand \Atcl {\mathrm{Atcl}}



\newcounter{MF}
\newcommand\stMF{\stepcounter{MF}}

\newcommand{\lec}{\stMF\ifodd\value{MF}lecteur\xspace\else 
lectrice\xspace\fi}

\newcommand{\lecs}{\stMF\ifodd\value{MF}lecteurs\xspace\else 
lectrices\xspace\fi}

\newcommand{\alec}{\stMF\ifodd\value{MF}au lecteur\xspace\else%
à la lectrice\xspace\fi}

\newcommand{\dlec}{\stMF\ifodd\value{MF}du lecteur\xspace\else%
de la lectrice\xspace\fi}

\newcommand{\llec}{\stMF\ifodd\value{MF}le lecteur\xspace\else la lectrice\xspace\fi}

\newcommand{\Llec}{\stMF\ifodd\value{MF}Le lecteur\xspace\else La lectrice\xspace\fi}

\newcommand{\lui}{\ifodd\value{MF}lui\xspace\else
elle\xspace\fi}

\newcommand{\celui}{\ifodd\value{MF}celui\xspace\else
celle\xspace\fi}

\newcommand{\ceux}{\ifodd\value{MF}ceux\xspace\else
celles\xspace\fi}

\newcommand{\er}{\ifodd\value{MF}er\xspace\else
ère\xspace\fi}

\newcommand{\eux}{\ifodd\value{MF}eux\xspace\else
elles\xspace\fi}

\newcommand{\eUx}{\ifodd\value{MF}eux\xspace\else
euse\xspace\fi}

\newcommand{\eUX}{\ifodd\value{MF}eux\xspace\else
euses\xspace\fi}

\newcommand{\leux}{\ifodd\value{MF}leux\xspace\else
leuse\xspace\fi}

\newcommand{\il}{\ifodd\value{MF}il\xspace\else
elle\xspace\fi}

\newcommand{\ien}{\ifodd\value{MF}ien\xspace\else
ienne\xspace\fi}

\newcommand{\e}{\ifodd\value{MF}\xspace \else e\xspace\fi}

\newcommand{\n}{\ifodd\value{MF}n\xspace\else nne\xspace\fi}

\makeatletter
\newcommand{\la}{\@ifstar{\ifodd\value{MF}le\else
la\fi}{\stMF\ifodd\value{MF}le\else la\fi}}
\makeatother

\newcommand \rem{\rdb
\noi{\it Remarque. }}

\newcommand \REM[1]{\rdb
\noi{\it Remarque#1. }}

\newcommand \rems{\rdb
\noi{\it Remarques. }}

\newcommand \exl{\rdb
\noi{\bf Exemple. }}

\newcommand \EXL[1]{\rdb
\noi{\bf Exemple: #1. }}

\newcommand \exls{\rdb
\noi{\bf Exemples. }}

\newcommand \thref[1] {théorème~\ref{#1}}
\newcommand \paref[1] {page~\pageref{#1}}
\newcommand \pstfref[1] {Posi\-tiv\-stel\-lensatz formel~\ref{#1}}
\newcommand \pstref[1] {Posi\-tiv\-stel\-lensatz~\ref{#1}}

\newcommand\oge{\leavevmode\raise.3ex\hbox{$\scriptscriptstyle\langle\!\langle\,$}}
\newcommand\feg{\leavevmode\raise.3ex\hbox{$\scriptscriptstyle\,\rangle\!\rangle$}}

\newcommand\gui[1]{\oge{#1}\feg}

\newcommand \facile{\begin{proof}
Laissée \alec.
\end{proof}
}

\newcommand \num {{n$^{\mathrm{ o}}$}}

\newcommand\comm{\rdb
\noi{\it Commentaire. }}

\newcommand\COM[1]{\rdb
\noi{\it Commentaire #1. }}

\newcommand\comms{\rdb
\noi{\it Commentaires. }}

\newcommand\Subsubsection[1]{
\rdb\addcontentsline{toc}{subsubsection}{#1} \subsubsection*{#1}}

\renewcommand\paragraph[1]{

\medskip \noindent $\bullet$ \textbf{#1}}

\newcommand\Pb{\rdb
\noi{\bf Problème. }}

\newcommand\eoq{\hbox{}\nobreak
\vrule width 1.4mm height 1.4mm depth 0mm}

\newcommand \Cad {C'est-à-dire\xspace}
\newcommand \recu {récur\-rence\xspace}
\newcommand \hdr {hypo\-thèse de \recu}
\newcommand \cad {c'est-à-dire\xspace}
\newcommand \cade {c'est-à-dire en\-co\-re\xspace}
\newcommand \ssi {si, et seu\-lement si, }
\newcommand \ssiz {si, et seu\-lement si,~}
\newcommand \cnes {con\-di\-tion néces\-sai\-re et suf\-fi\-san\-te\xspace}
\newcommand \spdg {sans per\-te de géné\-ra\-lité\xspace}
\newcommand \Spdg {Sans per\-te de géné\-ra\-lité\xspace}

\newcommand \Propeq {Les pro\-pri\-é\-tés sui\-van\-tes sont 
équi\-va\-len\-tes.}
\newcommand \propeq {les pro\-pri\-é\-tés sui\-van\-tes sont 
équi\-va\-len\-tes.}

\newcommand \Kev {$\gK$-\evc}
\newcommand \Kbev {$\gKb$-\evc}
\newcommand \Kevs {$\gK$-\evcs}

\newcommand \Lev {$\gL$-\evc}
\newcommand \Levs {$\gL$-\evcs}

\newcommand \Qev {$\QQ$-\evc}
\newcommand \Qevs {$\QQ$-\evcs}

\newcommand \QQev {$\QQ$-\evc}
\newcommand \QQevs {$\QQ$-\evcs}

\newcommand \kev {$\gk$-\evc}
\newcommand \kevs {$\gk$-\evcs}

\newcommand \lev {$\gl$-\evc}
\newcommand \levs {$\gl$-\evcs}

\newcommand \Alg {$\gA$-\alg}
\newcommand \Algs {$\gA$-\algs}

\newcommand \Blg {$\gB$-\alg}
\newcommand \Blgs {$\gB$-\algs}

\newcommand \Clg {$\gC$-\alg}
\newcommand \Clgs {$\gC$-\algs}

\newcommand \klg {$\gk$-\alg}
\newcommand \klgs {$\gk$-\algs}

\newcommand \llg {$\gl$-\alg}
\newcommand \llgs {$\gl$-\algs}

\newcommand \Klg {$\gK$-\alg}
\newcommand \Klgs {$\gK$-\algs}

\newcommand \Llg {$\gL$-\alg}
\newcommand \Llgs {$\gL$-\algs}

\newcommand \QQlg {$\QQ$-\alg}
\newcommand \QQlgs {$\QQ$-\algs}

\newcommand \Rlg {$\gR$-\alg}
\newcommand \Rlgs {$\gR$-\algs}

\newcommand \RRlg {$\RR$-\alg}
\newcommand \RRlgs {$\RR$-\algs}

\newcommand \ZZlg {$\ZZ$-\alg}
\newcommand \ZZlgs {$\ZZ$-\algs}

\newcommand \Amo {$\gA$-mo\-du\-le\xspace}
\newcommand \Amos {$\gA$-mo\-du\-les\xspace}

\newcommand \Bmo {$\gB$-mo\-du\-le\xspace}
\newcommand \Bmos {$\gB$-mo\-du\-les\xspace}

\newcommand \Cmo {$\gC$-mo\-du\-le\xspace}
\newcommand \Cmos {$\gC$-mo\-du\-les\xspace}

\newcommand \kmo {$\gk$-mo\-du\-le\xspace}
\newcommand \kmos {$\gk$-mo\-du\-les\xspace}

\newcommand \Kmo {$\gK$-mo\-du\-le\xspace}
\newcommand \Kmos {$\gK$-mo\-du\-les\xspace}

\newcommand \Lmo {$\gL$-mo\-du\-le\xspace}
\newcommand \Lmos {$\gL$-mo\-du\-les\xspace}

\newcommand \Vmo {$\gV$-mo\-du\-le\xspace}
\newcommand \Vmos {$\gV$-mo\-du\-les\xspace}

\newcommand \Ali {appli\-ca\-tion $\gA$-\lin}
\newcommand \Alis {appli\-ca\-tions $\gA$-\lins}

\newcommand \Kli {appli\-ca\-tion $\gK$-\lin}
\newcommand \Klis {appli\-ca\-tions $\gK$-\lins}

\newcommand \Bli {appli\-ca\-tion $\gB$-\lin}
\newcommand \Blis {appli\-ca\-tions $\gB$-\lins}

\newcommand \Cli {appli\-ca\-tion $\gC$-\lin}
\newcommand \Clis {appli\-ca\-tions $\gC$-\lins}

\newcommand \ac{algé\-bri\-quement clos\xspace}  

\newcommand \acl {an\-neau \icl}
\newcommand \acls {an\-neaux \icl}

\newcommand \adp {an\-neau de Pr\"u\-fer\xspace}
\newcommand \adps {an\-neaux de Pr\"u\-fer\xspace}

\newcommand \adpc {\adp \coh}
\newcommand \adpcs {\adps \cohs}

\newcommand \adu {\alg de décom\-po\-sition univer\-selle\xspace}
\newcommand \adus {\algs de décom\-po\-sition univer\-selle\xspace}

\newcommand \adv {anneau de valuation\xspace}
\newcommand \advs {anneaux de valuation\xspace}

\newcommand \advd {anneau de valuation discrète\xspace}
\newcommand \advds {anneaux de valuation discrète\xspace}

\newcommand \advl {anneau \dvla} 
\newcommand \advls {anneaux \dvlas} 

\newcommand \Afr {Anneau \frl}
\newcommand \Afrs {Anneaux \frls}
\newcommand \afr {anneau \frl}
\newcommand \afrs {anneaux \frls}

\newcommand \aFr {\hyperref[theorieAfr]{anneau \frl}}
\newcommand \aFrs {\hyperref[theorieAfr]{anneau \frls}}

\newcommand \afrr {\afr réduit\xspace}
\newcommand \afrrs {\afrs réduits\xspace}
\newcommand \Afrrs {\Afrs réduits\xspace}

\newcommand \afrvr {\afr avec \ravs}
\newcommand \aFrvr {\hyperref[theorieAfrrv]{\afrvr}}
\newcommand \afrvrs {\afrs avec \ravs}

\newcommand \aftr {anneau réticulé \ftm réel\xspace}
\newcommand \aftrs {anneaux réticulés \ftm réels\xspace}

\newcommand \aG {\alg galoisienne\xspace}
\newcommand \aGs {\algs galoisiennes\xspace}

\newcommand \agB {\alg de Boole\xspace}
\newcommand \agBs {\algs de Boole\xspace}

\newcommand \agH {\alg de Heyting\xspace}
\newcommand \agHs {\algs de Heyting\xspace}

\newcommand \agq{algébrique\xspace}
\newcommand \agqs{algébriques\xspace}

\newcommand \agqt{algébriuement\xspace}

\newcommand \aKr {anneau de Krull\xspace}
\newcommand \aKrs {anneaux de Krull\xspace}

\newcommand \alg {algè\-bre\xspace}
\newcommand \algs {algè\-bres\xspace}

\newcommand \algo{algo\-rithme\xspace}
\newcommand \algos{algo\-rithmes\xspace}

\newcommand \algq{al\-go\-rith\-mi\-que\xspace}
\newcommand \algqs{al\-go\-rith\-mi\-ques\xspace}

\newcommand \ali {appli\-ca\-tion \lin}
\newcommand \alis {appli\-ca\-tions \lins}

\newcommand \alo {an\-neau lo\-cal\xspace}
\newcommand \alos {an\-neaux lo\-caux\xspace}

\newcommand \algb {an\-neau \lgb}
\newcommand \algbs {an\-neaux \lgbs}

\newcommand \alrd {\alo \dcd}
\newcommand \alrds {\alos \dcds}

\newcommand \anar {anneau \ari}
\newcommand \anars {anneaux \aris}

\newcommand \anor {an\-neau nor\-mal\xspace}
\newcommand \anors {an\-neaux nor\-maux\xspace}

\newcommand \apf {\alg \pf}
\newcommand \apfs {\algs \pf}

\newcommand \apG {\alg pré\-galoisienne\xspace}
\newcommand \apGs {\algs pré\-galoisiennes\xspace}

\newcommand \arch {archimédien\xspace}
\newcommand \arche {archimédienne\xspace}
\newcommand \archs {archimédiens\xspace}
\newcommand \arches {archimédiennes\xspace}

\newcommand \arc {anneau réel clos\xspace}
\newcommand \aRc {\hyperref[theorieArc]{\arc}}
\newcommand \arcs {anneaux réels clos\xspace}

\newcommand \ari{arith\-mé\-tique\xspace}  
\newcommand \aris{arith\-mé\-tiques\xspace}  

\newcommand \Asr {Anneau \str}
\newcommand \Asrs {Anneaux \strs}
\newcommand \asr {anneau \str}
\newcommand \asrs {anneaux \strs}

\newcommand \asrvr {\asr avec \ravs}
\newcommand \asrvrs {\asrs avec \ravs}

\newcommand \atf {\alg \tf}
\newcommand \atfs {\algs \tf}

\newcommand \auto {auto\-mor\-phisme\xspace}
\newcommand \autos {auto\-mor\-phismes\xspace}


\newcommand \bdg {base de Gr\"obner\xspace}
\newcommand \bdgs {bases de Gr\"obner\xspace}

\newcommand \bdp {base de \dcn partielle\xspace}
\newcommand \bdps {bases de \dcn partielle\xspace}

\newcommand \bdf {base de \fap\xspace}

\newcommand \Bif {Borne infé\-rieure\xspace} %
\newcommand \bif {borne infé\-rieure\xspace} %
\newcommand \bifs {bornes infé\-rieures\xspace} %

\newcommand \bsp {borne supé\-rieure\xspace} %
\newcommand \bsps {borne supé\-rieures\xspace} %


\newcommand \cac{corps \ac}  

\newcommand \calf{calcul formel\xspace}  

\newcommand \cara{carac\-té\-ris\-tique\xspace}  
\newcommand \caras{carac\-té\-ris\-tiques\xspace}  

\newcommand \carn{carac\-té\-ri\-sation\xspace}  
\newcommand \carns{carac\-té\-ri\-sations\xspace}  

\newcommand \carar{carac\-té\-riser\xspace}

\newcommand \carf{de carac\-tère fini\xspace}  

\newcommand \cdi{corps discret\xspace}
\newcommand \cdis{corps discrets\xspace}
  
\newcommand \cdv{changement de variables\xspace}  
\newcommand \cdvs{changements de variables\xspace}  

\newcommand \cdva{corps valué discret\xspace}
\newcommand \cdvas{corps valué discrets\xspace}

\newcommand \cla {clôture \agq}
\newcommand \clas {clôtures \agqs}

\newcommand \cli {clôture intégrale\xspace}
\newcommand \clis {clôtures intégrales\xspace}

\newcommand \codi {corps ordonné discret\xspace}
\newcommand \codis {corps ordonnés discrets\xspace}

\newcommand \coe {coef\-fi\-cient\xspace}
\newcommand \coes {coef\-fi\-cients\xspace}

\newcommand \coh {co\-hé\-rent\xspace}
\newcommand \cohs {co\-hé\-rents\xspace}

\newcommand \cohc {co\-hé\-rence\xspace}

\newcommand \coli {combi\-nai\-son \lin}
\newcommand \colis {combi\-nai\-sons \lins}

\newcommand \com {co\-ma\-xi\-maux\xspace}
\newcommand \come {co\-ma\-xi\-males\xspace}

\newcommand \coo {coor\-donnée\xspace}
\newcommand \coos {coor\-données\xspace}

\newcommand \cop {complé\-men\-taire\xspace}
\newcommand \cops {complé\-men\-taires\xspace}

\newcommand \corl {coro\-laire\xspace}
\newcommand \corls {coro\-laires\xspace}

\newcommand \cosv {conser\-vative\xspace}
\newcommand \cosvs {conser\-vatives\xspace}

\newcommand \cOsv {\hyperref[defithconserv]{conser\-vative\xspace}}
\newcommand \cOsvs {\hyperref[defithconserv]{conser\-vatives\xspace}}

\newcommand \covr {corps ordonné avec \ravs}
\newcommand \covrs {corps ordonnés avec \ravs}

\newcommand \cpb {compa\-tible\xspace} 
\newcommand \cpbs {compa\-tibles\xspace} 

\newcommand \cpbt {compa\-tibi\-lité\xspace} 
\newcommand \cpbtz {compa\-tibi\-lité} 

\newcommand \crc {corps réel clos\xspace}
\newcommand \crcs {corps réels clos\xspace}

\newcommand \crcd {corps réel clos discret\xspace}
\newcommand \crcds {corps réels clos discrets\xspace}

\newcommand \cyct {cyclo\-to\-mique\xspace}
\newcommand \cycts {cyclo\-to\-miques\xspace}


\newcommand \dcd {rési\-duel\-lement dis\-cret\xspace}
\newcommand \dcds {rési\-duel\-lement dis\-crets\xspace}

\newcommand \dcn {décom\-po\-sition\xspace}
\newcommand \dcns {décom\-po\-sitions\xspace}

\newcommand \dcnb {\dcn bornée\xspace}

\newcommand \dcnc {\dcn complète\xspace}

\newcommand \dcnp {\dcn partielle\xspace}

\newcommand \dcp {décom\-posa\-ble\xspace}
\newcommand \dcps {décom\-posa\-bles\xspace}

\newcommand \ddk {dimension de~Krull\xspace}
\newcommand \ddi {de dimension infé\-rieure ou égale à~}

\newcommand \dimm {description immédiate\xspace}

\newcommand \ddp {domaine de Pr\"u\-fer\xspace}
\newcommand \ddps {domaines de Pr\"u\-fer\xspace}

\newcommand \Demo{Démonstration\xspace}     
\newcommand \Demos{Démonstrations\xspace}     

\newcommand \demo{démon\-stra\-tion\xspace}     
\newcommand \demos{démon\-stra\-tions\xspace}     

\newcommand \dems{démons\-tra\-tions\xspace}

\newcommand \deno{déno\-mi\-nateur\xspace}     
\newcommand \denos{déno\-mi\-nateurs\xspace}   

\newcommand \deter {déter\-mi\-nant\xspace}  
\newcommand \deters {déter\-mi\-nants\xspace}  
  
\newcommand \Dfn{Défi\-nition\xspace}  
\newcommand \Dfns{Défi\-nitions\xspace}  
\newcommand \dfn{défi\-nition\xspace}  
\newcommand \dfns{défi\-nitions\xspace}  

\newcommand \dftr {droite réticulée \ftm réelle\xspace}
\newcommand \dftrs {droites réticulées \ftm réelles\xspace}
  
\newcommand \dil{diffé\-rentiel\xspace}  
\newcommand \dils{diffé\-rentiels\xspace}  
\newcommand \dile{diffé\-ren\-tielle\xspace}  
\newcommand \diles{diffé\-ren\-tielles\xspace}  

\newcommand \dip{diviseur principal\xspace}
\newcommand \dips{diviseurs principaux\xspace}

\newcommand \discri{discri\-minant\xspace}  
\newcommand \discris{discri\-minants\xspace}  

\newcommand \divle {dimension divisorielle\xspace}

\newcommand \dit{distri\-bu\-ti\-vité\xspace}

\newcommand \dlg{d'élar\-gis\-sement\xspace}  

\newcommand \dok {domaine de Dedekind\xspace}
\newcommand \doks {domaines de Dedekind\xspace}

\newcommand \dvla {à diviseurs\xspace}
\newcommand \dvlas {à diviseurs\xspace}

\newcommand \dvld {\dvlt décom\-posé\xspace} %
\newcommand \dvlds {\dvlt décom\-posés\xspace} %

\newcommand \dvlg {diviso\-riel\xspace} 
\newcommand \dvlgs {diviso\-riels\xspace} 

\newcommand \dvli {\dvlt inver\-sible\xspace} 
\newcommand \dvlis {\dvlt inver\-sibles\xspace} 

\newcommand \dvlt {diviso\-riel\-lement\xspace} %

\newcommand \dvz {di\-viseur de zéro\xspace}
\newcommand \dvzs {di\-viseurs de zéro\xspace}

\newcommand \dve {divi\-si\-bi\-lité\xspace}

\newcommand \dvee {à \dve explicite\xspace}

\newcommand \dvr {diviseur\xspace}
\newcommand \dvrs {diviseurs\xspace}


\newcommand \Eds {Exten\-sion des sca\-laires\xspace}
\newcommand \edss {exten\-sions des sca\-laires\xspace}
\newcommand \eds {exten\-sion des sca\-laires\xspace}

\newcommand \eco {\elts \com}

\newcommand \egmt {éga\-lement\xspace}

\newcommand \egt {éga\-li\-té\xspace}
\newcommand \egts {éga\-li\-tés\xspace}

\newcommand \eli{élimi\-nation\xspace}  

\newcommand \elr{élé\-men\-taire\xspace}  
\newcommand \elrs{élé\-men\-taires\xspace}  

\newcommand \elrt{élé\-men\-tai\-rement\xspace}  

\newcommand \elt{élé\-ment\xspace}  
\newcommand \elts{élé\-ments\xspace}  

\def \endo {endo\-mor\-phisme\xspace}
\def \endos {endo\-mor\-phismes\xspace}

\newcommand \entrel {rela\-tion impli\-ca\-tive\xspace}
\newcommand \entrels {rela\-tions impli\-ca\-tives\xspace}

\newcommand\evc{espa\-ce vec\-to\-riel\xspace} 
\newcommand\evcs{espa\-ces vec\-to\-riels\xspace} 

\newcommand \eqv {équi\-valent\xspace}  
\newcommand \eqve {équi\-va\-lente\xspace}  
\newcommand \eqvs {équi\-valents\xspace}  
\newcommand \eqves {équi\-val\-entes\xspace}  

\newcommand \eqvc {équi\-va\-lence\xspace}  
\newcommand \eqvcs {équi\-va\-lences\xspace}  

\newcommand \esid {essen\-tiel\-lement iden\-tique\xspace}  
\newcommand \esids {essen\-tiel\-lement iden\-tiques\xspace}  

\newcommand \Esid {\hyperref[defitdyesidentiques]{\esid}}  
\newcommand \Esids {\hyperref[defitdyesidentiques]{\esids}}  

\newcommand \eseq {essen\-tiel\-lement \eqve}  
\newcommand \eseqs {essen\-tiel\-lement \eqves}  

\newcommand \Eseq {\hyperref[defitheseq]{\eseq}}  
\newcommand \Eseqs {\hyperref[defitheseq]{\eseqs}}

\newcommand \fab {\fcn bornée\xspace}
\newcommand \fabs {\fcns bornées\xspace}

\newcommand \fat {\fcn totale\xspace}
\newcommand \fats {\fcn totales\xspace}

\newcommand \fap {\fcn partielle\xspace}
\newcommand \faps {\fcns partielles\xspace}

\newcommand \fip {filtre pre\-mier\xspace}
\newcommand \fips {filtres pre\-miers\xspace}

\newcommand \fipma {\fip maxi\-mal\xspace}
\newcommand \fipmas {\fips maxi\-maux\xspace}

\newcommand \fit {fidè\-lement\xspace}

\newcommand \fcn {factorisation\xspace}
\newcommand \fcns {factorisations\xspace}

\newcommand \fdi {for\-te\-ment dis\-cret\xspace}
\newcommand \fdis {for\-te\-ment dis\-crets\xspace}

\newcommand \fsa {fermé \sagq}
\newcommand \fsas {fermés \sagqs}

\newcommand \fsagc {fonction \sagc}
\newcommand \fsagcs {fonctions \sagcs}

\newcommand \fmt {formel\-lement\xspace}

\newcommand \fpt {\fit plat\xspace}
\newcommand \fpte {\fit plate\xspace}
\newcommand \fpts {\fit plats\xspace}
\newcommand \fptes {\fit plates\xspace}

\newcommand \frl {for\-tement réticulé\xspace}
\newcommand \frle {for\-tement réticulée\xspace}
\newcommand \frls {for\-tement réticulés\xspace}

\newcommand \ftm {fortement\xspace}

\newcommand\gmt{géométrie\xspace}  
\newcommand\gmts{géométries\xspace}  

\newcommand\gaq{\gmt \agq}  

\newcommand\gmq{géomé\-trique\xspace}  
\newcommand\gmqs{géomé\-triques\xspace}  

\newcommand\gmqt{géomé\-tri\-quement\xspace}  

\newcommand\gne{géné\-ra\-lisé\xspace}  
\newcommand\gnee{géné\-ra\-lisée\xspace}  
\newcommand\gnes{géné\-ra\-lisés\xspace}  
\newcommand\gnees{géné\-ra\-lisées\xspace}  

\newcommand\gnl{géné\-ral\xspace}  
\newcommand\gnle{géné\-rale\xspace}  
\newcommand\gnls{géné\-raux\xspace}  
\newcommand\gnles{géné\-rales\xspace}  

\newcommand\gnlt{géné\-ra\-lement\xspace}  

\newcommand\gnn{géné\-ra\-li\-sa\-tion\xspace}  
\newcommand\gnns{géné\-ra\-li\-sa\-tions\xspace}  

\newcommand\gnq {géné\-rique\xspace}  
\newcommand\gnqs {géné\-riques\xspace}  

\newcommand\gnr{géné\-ra\-liser\xspace}  

\newcommand \gns{géné\-ra\-lise\xspace}

\newcommand \gnt{géné\-ra\-lité\xspace}
\newcommand \gnts{géné\-ra\-lités\xspace}

\newcommand \grl{groupe \rtl}
\newcommand \grls{groupes \rtls}

\newcommand \gRl {\hyperref[theorieGrl]{\grl}}
\newcommand \gRls {\hyperref[theorieGrl]{\grls}}

\newcommand\gtr{géné\-ra\-teur\xspace}  
\newcommand\gtrs{géné\-ra\-teurs\xspace}  


\newcommand \homo {ho\-mo\-mor\-phisme\xspace}
\newcommand \homos {ho\-mo\-mor\-phismes\xspace}

\newcommand \hmg {homo\-gène\xspace}
\newcommand \hmgs {homo\-gènes\xspace}

\newcommand \icftr {intervalle compact réticulé \ftm réel\xspace}
\newcommand \icftrs {intervalles compacts réticulés \ftm réels\xspace}

\newcommand \icl {inté\-gra\-lement clos\xspace}
\newcommand \icle {inté\-gra\-lement close\xspace}

\newcommand \icsr {intervalle compact \stm réticulé\xspace}
\newcommand \icsrs {intervalles compacts \stm réticulés\xspace}

\newcommand \icrc {intervalle compact réel clos\xspace}
\newcommand \icrcs {intervalles compact réels clos\xspace}

\newcommand \id {idéal\xspace}
\newcommand \ids {idéaux\xspace}

\newcommand \ida {\idt \agq}
\newcommand \idas {\idts \agqs}

\newcommand \idc  {\idt de Cramer\xspace}
\newcommand \idcs {\idts de Cramer\xspace}

\newcommand \idd {idéal déter\-minan\-tiel\xspace}
\newcommand \idds {idéaux déter\-minan\-tiels\xspace}

\newcommand \idema {idéal maxi\-mal\xspace}
\newcommand \idemas {idéaux maxi\-maux\xspace}

\newcommand \idep {idéal pre\-mier\xspace}
\newcommand \ideps {idéaux pre\-miers\xspace}

\newcommand \idemi {\idep minimal\xspace}
\newcommand \idemis {\ideps minimaux\xspace}

\newcommand \idf {idéal de Fitting\xspace}
\newcommand \idfs {idéaux de Fitting\xspace}

\newcommand \idif {idéal \dvlg fini\xspace}
\newcommand \idifs {idéaux \dvlgs finis\xspace}

\newcommand \idli {idéal \dvli\xspace} 
\newcommand \idlis {idéaux \dvlis\xspace} 

\newcommand \idm {idem\-po\-tent\xspace}
\newcommand \idms {idem\-po\-tents\xspace}
\newcommand \idme {idem\-po\-tente\xspace}
\newcommand \idmes {idem\-po\-tentes\xspace}

\newcommand \idp {idéal prin\-ci\-pal\xspace}
\newcommand \idps {idé\-aux prin\-ci\-paux\xspace}

\newcommand \idt {iden\-ti\-té\xspace}
\newcommand \idts {iden\-ti\-tés\xspace}

\newcommand \idtr {in\-dé\-ter\-mi\-née\xspace}
\newcommand \idtrs {in\-dé\-ter\-mi\-nées\xspace}

\newcommand \ifr {idéal frac\-tion\-nai\-re\xspace}
\newcommand \ifrs {idéaux frac\-tion\-nai\-res\xspace}

\newcommand \imd {immé\-diat\xspace}
\newcommand \imde {immé\-diate\xspace}
\newcommand \imds {immé\-diats\xspace}
\newcommand \imdes {immé\-diates\xspace}

\newcommand \imdt {immé\-dia\-te\-ment\xspace}

\newcommand \indtr {inf-demi-treillis\xspace} 

\newcommand \inteq {intui\-ti\-vement \eqve}
\newcommand \inteqs {intui\-ti\-vement \eqves}

\newcommand \Inteq {\hyperref[defextintequiv]{\inteq}}
\newcommand \Inteqs {\hyperref[defextintequiv]{\inteqs}}

\newcommand \ing {in\-ver\-se \gne}
\newcommand \ings {in\-ver\-ses \gnes}

\newcommand \iMP {in\-ver\-se de Moo\-re-Pen\-ro\-se\xspace}
\newcommand \iMPs {in\-ver\-ses de Moo\-re-Pen\-ro\-se\xspace}

\newcommand \ipp {\idep poten\-tiel\xspace}
\newcommand \ipps {\ideps poten\-tiels\xspace}

\newcommand \ird {irré\-duc\-tible\xspace}
\newcommand \irds {irré\-duc\-tibles\xspace}

\newcommand \irdt {irré\-duc\-ti\-bi\-lité\xspace}

\newcommand \iso {iso\-mor\-phisme\xspace}
\newcommand \isos {iso\-mor\-phismes\xspace}

\newcommand \itf {idéal \tf}
\newcommand \itfs {idé\-aux \tf}

\newcommand \itid {intui\-ti\-vement iden\-tique\xspace}
\newcommand \itids {intui\-ti\-vement iden\-tiques\xspace}

\newcommand \iv {inversible\xspace}
\newcommand \ivs {inversibles\xspace}

\newcommand \ivdg {inverse divisoriel\xspace} 
\newcommand \ivdgs {inverses divisoriels\xspace} 

\newcommand \ivde {inverse divisorielle\xspace} 
\newcommand \ivdes {inverses divisorielles\xspace} 

\newcommand \ivda {inverse divisoriel\xspace} 
\newcommand \ivdas {inverses divisoriels\xspace} 

\newcommand \ivmp {il en va de même pour\xspace}


\newcommand \lgb {local-global\xspace}
\newcommand \lgbe {locale-globale\xspace}
\newcommand \lgbs {local-globals\xspace}

\newcommand \lin {liné\-aire\xspace}
\newcommand \lins {liné\-aires\xspace}

\newcommand \lint {liné\-ai\-rement\xspace}

\newcommand \lmo {\lot mono\-gène\xspace}
\newcommand \lmos {\lot mono\-gènes\xspace}

\newcommand \lnl {\lot \nl}
\newcommand \lnls {\lot \nls}

\newcommand \lot {loca\-lement\xspace}

\newcommand \lon {loca\-li\-sation\xspace}
\newcommand \lons {loca\-li\-sations\xspace}

\newcommand \lop {\lot prin\-cipal\xspace}
\newcommand \lops {\lot prin\-cipaux\xspace}

\newcommand \lsdz {\lot \sdz}

\newcommand \mdi {mo\-dule des \diles}

\newcommand \mlm {mo\-dule \lmo}
\newcommand \mlms {mo\-dules \lmos}

\newcommand \mlmo {ma\-tri\-ce de loca\-li\-sation
mono\-gène\xspace}
\newcommand \mlmos {ma\-tri\-ces de loca\-li\-sation
mono\-gène\xspace}

\newcommand \mlp {ma\-tri\-ce de loca\-li\-sation
prin\-ci\-pa\-le\xspace}
\newcommand \mlps {ma\-tri\-ces de loca\-li\-sation
prin\-ci\-pa\-le\xspace}

\newcommand \mo {mo\-no\"{\i}de\xspace}
\newcommand \mos {mo\-no\"{\i}des\xspace}

\newcommand \moco {\mos \com}

\newcommand \molo {morphisme de \lon\xspace}
\newcommand \molos {morphismes de \lon\xspace}

\newcommand \mom {mo\-nô\-me\xspace}
\newcommand \moms {mo\-nô\-mes\xspace}

\newcommand \moquo {morphisme de passage au quotient\xspace}
\newcommand \moquos {morphismes de passage au quotient\xspace}

\newcommand \mpf {mo\-dule \pf}
\newcommand \mpfs {mo\-dules \pf}

\newcommand \mpl {mo\-dule plat\xspace}
\newcommand \mpls {mo\-dules plats\xspace}

\newcommand \mpn {ma\-trice de \pn}
\newcommand \mpns {ma\-trices de \pn}

\newcommand \mprn {ma\-trice de \prn}
\newcommand \mprns {ma\-trices de \prn}

\newcommand \mptf {mo\-dule \ptf}
\newcommand \mptfs {mo\-dules \ptfs}

\newcommand \mrc {mo\-dule \prc}
\newcommand \mrcs {mo\-dules \prcs}

\newcommand \mtf {mo\-du\-le \tf}
\newcommand \mtfs {mo\-du\-les \tf}


\newcommand \ncr{néces\-saire\xspace}       
\newcommand \ncrs{néces\-saires\xspace}       

\newcommand \ncrt{néces\-sai\-rement\xspace}       

\newcommand \ndz {régu\-lier\xspace}
\newcommand \ndzs {régu\-liers\xspace}

\newcommand \nl {simple\xspace}
\newcommand \nls {simples\xspace}

\newcommand \noco {\noe \coh}
\newcommand \nocos {\noes \cohs}

\newcommand \Noe {Noether\xspace}

\newcommand \noe {noethé\-rien\xspace}
\newcommand \noes {noethé\-riens\xspace}
\newcommand \noee {noethé\-rienne\xspace}
\newcommand \noees {noethé\-riennes\xspace}

\newcommand \noet {noethé\-ria\-nité\xspace}

\newcommand \nst {Null\-stellen\-satz\xspace}
\newcommand \nsts {Null\-stellen\-s\"atze\xspace}

\newcommand \op{opé\-ra\-tion\xspace}  
\newcommand \ops{opé\-ra\-tions\xspace}
\newcommand \opari{\op\ari}  
\newcommand \oparis{\ops\aris}  
\newcommand \oparisv{\ops\arisv}  

\newcommand \oqc {ouvert \qc}
\newcommand \oqcs {ouverts \qcs}

\newcommand \ort{or\-tho\-go\-nal\xspace}  
\newcommand \orte{or\-tho\-go\-na\-le\xspace}  
\newcommand \orts{or\-tho\-go\-naux\xspace}  
\newcommand \ortes{or\-tho\-go\-na\-les\xspace}  


\newcommand \pa {couple saturé\xspace}
\newcommand \pas {couples saturés\xspace}
 
\newcommand \paral{paral\-lèle\xspace}  
\newcommand \parals{paal\-lèles\xspace}  

\newcommand \paralm{paral\-lè\-lement\xspace}

\newcommand \pb{pro\-blè\-me\xspace}  
\newcommand \pbs{pro\-blè\-mes\xspace}  

\newcommand \peq {purement équa\-tion\-nelle\xspace}
\newcommand \peqs {purement équa\-tion\-nelles\xspace}

\newcommand \pf {de \pn finie\xspace}

\newcommand \pgn {polygone de Newton\xspace}
\newcommand \pgns {polygones de Newton\xspace}

\newcommand \plc {rési\-duel\-lement \zed}
\newcommand \plcs {rési\-duel\-lement \zeds}

\newcommand \Plg {Prin\-cipe \lgb}
\newcommand \plg {prin\-cipe \lgb}
\newcommand \plgs {prin\-cipes \lgbs}

\newcommand \plga {\plg abs\-trait\xspace}
\newcommand \plgas {\plgs abs\-traits\xspace}

\newcommand \Plgc {\Plg con\-cret\xspace}
\newcommand \plgc {\plg con\-cret\xspace}
\newcommand \plgcs {\plgs con\-crets\xspace}

\newcommand \pn {présen\-ta\-tion\xspace}
\newcommand \pns {présen\-ta\-tions\xspace}

\newcommand \pog {\pol \hmg\xspace}
\newcommand \pogs {\pols \hmgs\xspace}

\newcommand \Pol {Poly\-nôme\xspace}
\newcommand \Pols {Poly\-nômes\xspace}

\newcommand \pol {poly\-nôme\xspace}
\newcommand \pols {poly\-nômes\xspace}

\newcommand \poll{poly\-nomial\xspace}  
\newcommand \polls{poly\-nomiaux\xspace}  
\newcommand \polle{poly\-no\-miale\xspace}  
\newcommand \polles{poly\-no\-miales\xspace}  

\newcommand \pollt{poly\-no\-mia\-lement\xspace}  

\newcommand \polfon {\pol fon\-da\-men\-tal\xspace}
\newcommand \polmu {\pol rang\xspace}
\newcommand \polmus {\pols rang\xspace}
\newcommand \polcar {\pol carac\-té\-ris\-tique\xspace}
\newcommand \polcars {\pols carac\-té\-ris\-tiques\xspace}
\newcommand \polmin {\pol mini\-mal\xspace}

\newcommand \prc {\pro de rang constant\xspace}
\newcommand \prcs {\pros de rang constant\xspace}

\newcommand \prcc {prin\-ci\-pe de \rcc}
\newcommand \prca {prin\-ci\-pe de \rca}
\newcommand \prce {prin\-ci\-pe de \rce}

\newcommand \prmt {préci\-sé\-ment\xspace}
\newcommand \Prmt {Préci\-sé\-ment\xspace}

\newcommand \prn {pro\-jec\-tion\xspace}
\newcommand \prns {pro\-jec\-tions\xspace}

\newcommand \pro {pro\-jec\-tif\xspace}
\newcommand \pros {pro\-jec\-tifs\xspace}

\newcommand \prr {pro\-jec\-teur\xspace}
\newcommand \prrs {pro\-jec\-teurs\xspace}

\newcommand \Prt {Pro\-pri\-été\xspace}
\newcommand \Prts {Pro\-pri\-étés\xspace}
\newcommand \prt {pro\-pri\-été\xspace}
\newcommand \prts {pro\-pri\-étés\xspace}

\newcommand \ptf {\pro \tf}
\newcommand \ptfs {\pros \tf}

\newcommand \qc {quasi-compact\xspace}
\newcommand \qcs {quasi-compacts\xspace}

\newcommand \qi {qua\-si in\-tè\-gre\xspace}
\newcommand \qis {qua\-si in\-tè\-gres\xspace}

\newcommand \qnl {quasi-\nl}
\newcommand \qnls {quasi-\nls}

\newcommand \ralg {règle \agq}
\newcommand \ralgs {règles \agqs}

\newcommand \rav {racine virtuelle\xspace}
\newcommand \ravs {racines virtuelles\xspace}

\newcommand \rcc {\rcm con\-cret\xspace}
\newcommand \rca {\rcm abs\-trait\xspace}
\newcommand \rce {\rcc des é\-ga\-li\-tés\xspace}

\newcommand \rcm {recol\-lement\xspace}
\newcommand \rcms {recol\-lements\xspace}

\newcommand \rcv {recou\-vrement\xspace} 
\newcommand \rcvs {recou\-vrements\xspace}

\newcommand \rde {rela\-tion de dépen\-dance\xspace}
\newcommand \rdes {rela\-tions de dépen\-dance\xspace}

\newcommand \rdi {\rde inté\-grale\xspace}
\newcommand \rdis {\rdes inté\-grales\xspace}

\newcommand \rdl {\rde \lin}
\newcommand \rdls {\rdes \lins}

\newcommand \rdt {rési\-duel\-lement\xspace}

\newcommand \rdy {règle dyna\-mique\xspace}
\newcommand \rdys {règles dyna\-miques\xspace}

\newcommand \red {règle directe\xspace}
\newcommand \reds {règles directes\xspace}

\newcommand \rex {\hyperref[defexistsimple]{règle exis\-ten\-tielle simple\xspace}}
\newcommand \rexs {\hyperref[defexistsimple]{règles exis\-ten\-tielles simples\xspace}}

\newcommand \rexri {\hyperref[defitdyexrig]{règle exis\-ten\-tielle rigide\xspace}}
\newcommand \rexris {\hyperref[defitdyexrig]{règles exis\-ten\-tielles rigides\xspace}}

\newcommand \rsim {règle de simplification\xspace}
\newcommand \rsims {règles de simplification\xspace}

\newcommand \rtl {réti\-culé\xspace}
\newcommand \rtls {réti\-culés\xspace}

\newcommand \rmq {\rcm de quotients\xspace} 
\newcommand \rvq {\rcv par quotients\xspace} 
\newcommand \rmqs {\rcms de quotients\xspace} %
\newcommand \rvqs {\rcvs par quotients\xspace} %

\newcommand \rpf {réduite-de-présen\-tation-finie\xspace}
\newcommand \rpfs {réduites-de-présen\-tation-finie\xspace}


\newcommand \sad {\salg dynamique\xspace}
\newcommand \sads {\salgs dynamiques\xspace}

\newcommand \sagq {semi\agq}
\newcommand \sagqs {semi\agqs}

\newcommand \sagc {\sagq continue\xspace}
\newcommand \sagcs {\sagqs continues\xspace}

\newcommand \salg {structure \agq}
\newcommand \salgs {structures \agqs}

\newcommand \scentrel {relation semi-implicative\xspace}
\newcommand \scentrels {relations semi-implicatives\xspace}

\newcommand \scf {schéma fini\-taire\xspace}
\newcommand \scfs {schémas fini\-taires\xspace}

\newcommand \scl {schéma \elr}
\newcommand \scls {schémas \elrs}

\newcommand \sdo {\sdr \orte}
\newcommand \sdos {\sdrs \ortes}

\newcommand \sdr {somme directe\xspace}
\newcommand \sdrs {sommes directes\xspace}

\newcommand \sdz {sans \dvz}

\newcommand \sfio {sys\-tème fondamental d'\idms ortho\-gonaux\xspace}
\newcommand \sfios {sys\-tèmes fondamentaux d'\idms ortho\-gonaux\xspace}

\newcommand \sgr {\sys \gtr}
\newcommand \sgrs {\syss \gtrs}

\newcommand \slgb {stricte\-ment \lgb}
\newcommand \slgbs {stricte\-ment \lgbs}

\newcommand \sli {\sys \lin}
\newcommand \slis {\syss \lins}

\newcommand \smq {symé\-trique\xspace}
\newcommand \smqs {symé\-triques\xspace}

\newcommand \spb {sépa\-rable\xspace}  
\newcommand \spbs {sépa\-rables\xspace}

\newcommand \spe {spéci\-fi\-cation\xspace}
\newcommand \spes {spéci\-fi\-cations\xspace}

\newcommand \spi {\spe incomplète\xspace}
\newcommand \spis {\spes incomplètes\xspace}

\newcommand \spl {sépa\-rable\xspace}  
\newcommand \spls {sépa\-rables\xspace}

\newcommand \spo {semipolynôme\xspace}
\newcommand \spos {semipolynômes\xspace}

\newcommand \spt{sépa\-ra\-bi\-lité\xspace}

\newcommand \srg {suite régu\-lière\xspace}
\newcommand \srgs {suites régu\-lières\xspace}

\newcommand \ste {strictement étale\xspace}
\newcommand \stes {strictement étales\xspace}

\newcommand \stf {strictement fini\xspace}
\newcommand \stfs {strictement finis\xspace}
\newcommand \stfe {strictement finie\xspace}
\newcommand \stfes {strictement finies\xspace}

\newcommand \stl {stablement libre\xspace}
\newcommand \stls {stablement libres\xspace}

\newcommand \stm {strictement\xspace}

\newcommand \str {\stm réticulé\xspace}
\newcommand \stre {\stm réticulée\xspace}
\newcommand \strs {\stm réticulés\xspace}
\newcommand \stres {\stm réticulées\xspace}

\newcommand \sul {supplé\-men\-taire\xspace}
\newcommand \suls {supplé\-men\-taires\xspace}

\newcommand \Sut {Support\xspace}
\newcommand \Suts {Supports\xspace}
\newcommand \sut {support\xspace}

\newcommand \syc {\sys de coordon\-nées\xspace}
\newcommand \sycs {\syss de coordon\-nées\xspace}

\newcommand \syp {\sys \poll}
\newcommand \syps {\syss \polls}

\newcommand \sys {sys\-tème\xspace}
\newcommand \syss {sys\-tèmes\xspace}

\newcommand \talg {théorie \agq}
\newcommand \talgs {théories \agqs}

\newcommand \tco {théorie cohé\-rente\xspace}
\newcommand \tcos {théories cohé\-rentes\xspace}

\newcommand \tdy {théorie dyna\-mique\xspace}
\newcommand \tdys {théories dyna\-miques\xspace}

\newcommand \tel {\hyperref[defexistsimple]{théorie exis\-ten\-tielle\xspace}}
\newcommand \tels {\hyperref[defexistsimple]{théories exis\-ten\-tielles\xspace}}

\newcommand \telri {\hyperref[defitdyexrig]{théorie exis\-ten\-tielle rigide\xspace}}
\newcommand \telris {\hyperref[defitdyexrig]{théories exis\-ten\-tielles rigides\xspace}}

\newcommand \tf {de type fini\xspace}

\newcommand \tfo {théorie formelle\xspace}
\newcommand \tfos {théorie formelles\xspace}

\newcommand \tgm {théorie \gmq}
\newcommand \tgms {théories \gmqs}

\newcommand \Tho {Théo\-rème\xspace}
\newcommand \Thos {Théo\-rèmes\xspace}
\newcommand \tho {théo\-rème\xspace}
\newcommand \thos {théo\-rèmes\xspace}

\newcommand \thoc {théo\-rème$\mathbf{^*}$~}

\newcommand \tpe {théorie \peq}
\newcommand \tpes {théories \peqs}

\newcommand \trdi {treil\-lis dis\-tri\-bu\-tif\xspace}
\newcommand \trdis {treil\-lis dis\-tri\-bu\-tifs\xspace}

\newcommand \trel {trans\-for\-mation \elr}
\newcommand \trels {trans\-for\-mations \elrs}


\newcommand \ultm {ultramétrique\xspace}
\newcommand \ultms {ultramétriques\xspace}

\newcommand \umd {unimo\-du\-laire\xspace}
\newcommand \umds {unimo\-du\-laires\xspace}

\newcommand \unt {uni\-taire\xspace}
\newcommand \unts {uni\-taires\xspace}

\newcommand \uvl {uni\-versel\xspace}
\newcommand \uvle {uni\-ver\-selle\xspace}
\newcommand \uvls {uni\-versels\xspace}
\newcommand \uvles {uni\-ver\-selles\xspace}


\newcommand \vala {valeur absolue\xspace}
\newcommand \valas {valeurs absolues\xspace}

\newcommand \vfn {véri\-fi\-cation\xspace}
\newcommand \vfns {véri\-fi\-cations\xspace}

\newcommand \vmd {vec\-teur \umd}
\newcommand \vmds {vec\-teurs \umds}

\newcommand \zed {z\'{e}ro-di\-men\-sionnel\xspace}
\newcommand \zede {z\'{e}ro-di\-men\-sion\-nelle\xspace}
\newcommand \zeds {z\'{e}ro-di\-men\-sion\-nels\xspace}
\newcommand \zedes {z\'{e}ro-di\-men\-sion\-nelles\xspace}

\newcommand \zedr {\zed réduit\xspace}
\newcommand \zedrs {\zeds réduits\xspace}

\newcommand \zmt {\tho de Zariski-Grothen\-dieck\xspace}


\newcommand \cof {cons\-truc\-tif\xspace}
\newcommand \cofs {cons\-truc\-tifs\xspace}

\newcommand \cov {cons\-truc\-tive\xspace}
\newcommand \covs {cons\-truc\-tives\xspace}

\newcommand \coma {\maths\covs}
\newcommand \clama {\maths clas\-siques\xspace}

\renewcommand \cot {cons\-truc\-ti\-vement\xspace}

\newcommand \matn {mathé\-ma\-ticien\xspace}
\newcommand \matne {mathé\-ma\-ti\-cienne\xspace}
\newcommand \matns {mathé\-ma\-ticiens\xspace}
\newcommand \matnes {mathé\-ma\-ti\-ciennes\xspace}

\newcommand \maths {mathé\-ma\-tiques\xspace}
\newcommand \mathe {mathé\-ma\-tique\xspace}

\newcommand \prco {démons\-tration \cov}
\newcommand \prcos {démons\-trations \covs}

\thickmuskip = 7mu plus 2mu

\pagestyle{headings}
\patchcmd{\sectionmark}{\MakeUppercase}{}{}{}

\stMF

\title{Polynômes cyclotomiques, sans utiliser les zéros des polynômes $Y^n-1$}

\author{Gema M. Diaz-Toca, Henri Lombardi, Claude Quitt\'e}

\date{\today}
\maketitle

\rdb
\label{beginfrench}

\begin{abstract} 
Cette note a pour but de construire un corps de racines \gui{intrinsèque} pour le \pol $X^n-1$ au dessus du corps $\QQ$ des rationnels, dans un esprit qui aurait sans doute plu~à Gauss, Kummer, Kronecker et Bishop.
Contrairement aux exposés usuels, notre construction n'utilise aucun corps de racines de $X^n-1$ qui serait donné avant la \demo d'irréductibilité du \pol \cyct.
\end{abstract}



\section*{Introduction}

La définition la plus répandue du \textsl{$n$-ème \pol cyclotomique}  $\Phi_n(Y)$ utilise explicitement les racines dde l'unité dans $\CC$
\begin{equation}\label{fdef1}
\Phi_n(Y)=\prod\nolimits_\zeta(Y-\zeta),
\end{equation}
où $\zeta$ parcourt les racines primitives $n$-ème de l'unité, i.e.\ $\zeta^n=1$ et $\zeta^d\neq 1$ si $d\divi n$ et $d\neq n$.
 
Gauss a démontré que $\Phi_n(Y)\in\ZZY$ et aussi que $\Phi_p$ est \ird dans $\QQY$ quand $p$ est premier. Kronecker a démontré que $\Phi_n(Y)$ est \ird pour tout $n$.

Le \textsl{$n$-ème \pol cyclotomique} est défini comme $\Psi_n(Y)=\frac{Y^n-1}{\Phi_n(Y)}$.
 
Une autre \dfn bien connue des \pols cyclotomiques
est donné par \recu comme suit:  $\Phi_1(Y)= Y-1$ et 
\begin{equation}\label{fdef2}
\Phi_n(Y)=\frac{Y^n-1}{\prod\limits_{0<d<n,d\mid n}\Phi_d(Y)},\,\,\, n\geq 2.
\end{equation} 
Pour cette seconde \dfn, il est nécessaire de démontrer d'abord que \smash {$\prod\limits_{0<d<n,d\mid n}\Phi_d(Y)$} divise $Y^n-1$. Cela peut se déduire de la première \dfn. 

\smallskip Toutes les \demos  connues d'irréductibilité de 
 $\Phi_n(Y)$ dans $\QQ[Y]$ utilisent un corps de décomposition de $Y^n-1$. 

\smallskip Dans cette note nous proposons une nouvelle \dfn comme suit. Pour $n=1$, $\Psi_1(Y)=1$ et  $\Phi_1(Y)=Y-1$. Pour $n\geq 2$,
\begin{equation}\label{fdef3}
\Psi_n(Y)=\ppcm\limits_{d:0<d<n,d\mid n}(Y^d-1)  \; \text{ and }  \;\Phi_n(Y)=(Y^n-1)/\Psi_n(Y).
\end{equation}

Nous démontrons ensuite les principales \prts des \pols cyclotomiques, en particulier que
 $\Phi_n(Y)$ est \ird pour tout $n$. 

Nous obtenons de cette manière un corps de racines \textsl{intrinsèque} pour $Y^n-1$ sous la forme $\Qn:=\aqo{\QQY}{\Phi_n}$
d'une manière que Gauss, Kummer, Kronecker et Bishop auraient appréciée.

\smallskip
Cette note est écrite de manière pleinement \cov de sorte que les résultats des \demos peuvent être directement implémentées dans un système formel. En particulier les structures quotients sont toujours présentées via des \syss de congruences, dans l'esprit de Gauss, Kummer et Kronecker, renouvelé dans la version minimaliste \cov des ensembles par Bishop dans (\cite{fBi67}).   
 
\smallskip
Nous devons \egmt dire clairement que nous ne prétendons pas proposer un nouvel \algo particulièrement performant pour un calcul rapide des \pols cyclotomiques. À notre connaissance les \algos introduits par Arnold et Monagan dans  \cite{fArMo} restent les plus efficaces. En fait nous avons effectué des comparaisons en utilisant SAGE 10.4,  Mathematica 13 et Maple 2022. 
Dans SAGE, nous avons trouvé que notre \algo est de manière surprenante plus efficace que l'\algo par défaut lorsque l'entier $n$ est un produit conséquent de nombres premiers distincts. En outre nous avons observé que Maple donne les meilleurs temps d'exécution, et cela est probablement dû au fait que Maple utilise l'\algo  SPL de \cite{fArMo}.

\smallskip Pour des exposés classiques sur les \pols cyclotomiques nous recommandons particulièrement les travaux de \cite{fBro,fKro,fWei} et \cite{fTignol}.

\section{Préliminaires}

Nous rappelons d'abord quelques notations et résultats classiques
dans la théorie des \pols, que nous utiliserons dans la section \ref{fmain}. 

Nous notons $\PP$ l'ensemble des nombres premiers.

La \textsl{fonction \cara d'Euler} est la fonction $\phi\colon\NN\etl\to\NN\etl$ où $\phi(n)$ est le nombre de \gtrs d'un groupe cyclique à $n$ \elts.

Dans un anneau intègre à pgcd $\gZ$, nous notons $a\vi b$ pour $\pgcd(a,b)$ et
$a\vu b$ pour $\ppcm(a,b)$ ($a,b\neq 0$). 
Les \elts de $\gZ\etl$, lorsqu'on identifie deux \elts associés,
forment un treillis distributif, et $a(b\vi c)=(a b)\vi(a c)$. 

On rappelle maintenant le rapport entre la \dve dans $\ZX$ et celle dans $\QX$\footnote{La proposition classique \ref{fpropZXQX} se généralise en remplaçant $\Z$ par un anneau intègre à pgcd et $\QQ$ par son corps de fractions.}.
Nous notons $\rG(f)$ le pgcd des \coes de $f\in \ZX$. Le \pol $f$ est dit \textsl{G-primitif} si $\rG(f)=1$.

\begin{fproposition}[$\ZX$ est un anneau à pgcd] \label{fpropZXQX}~
\begin{enumerate}
\item  Soient  $f,g\in\QX$ des \pols unitaires. Si $fg\in\ZX$, alors $f$ \hbox{et $g\in\ZX$}. 
\item Soient $f,g\in\ZX$. Alors $f$ divise $g$ dans $\ZX$ \ssi $f$ divise $g$ dans $\QX$ et $\rG(f)$ divise $\rG(g)$ dans $\Z$.
\item L'anneau $\ZX$ est un anneau intègre à pgcd. Un \pol $f\in\ZX$ est \ird \ssi il est $\rG$-primitif et il est \ird dans $\QX$.
\item Deux \pols $\rG$-primitifs de $\ZX$ sont premiers entre eux (ils ont $1$ pour pgcd), \ssi ils sont premiers entre eux dans $\QX$.
\end{enumerate} 
\end{fproposition}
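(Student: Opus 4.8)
The plan is to deduce all four items from a single lemma, the multiplicativity of the content (Gauss's lemma on contents): for $f,g\in\ZX$ one has $\rG(fg)=\rG(f)\rG(g)$ up to sign. I would prove this first, reducing it to the statement that a product of two $\rG$-primitive polynomials is again $\rG$-primitive: if a prime $p$ divided $\rG(fg)$, then reducing mod $p$ would give $\ov f\,\ov g=0$ in $\Fp[X]$; since $\Fp$ is a discrete field, $\Fp[X]$ is an integral domain (compare leading coefficients), so $\ov f=0$ or $\ov g=0$, i.e.\ $p\mid\rG(f)$ or $p\mid\rG(g)$, contradicting primitivity. Hence $\rG(fg)$ has no prime factor and equals $1$. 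This is the only place where the arithmetic of $\ZZ$ enters; in the version over a gcd domain $\gZ$ it is replaced by the corresponding lemma, proved from Euclid's lemma.

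For item~1, write $f_1:=df$ and $g_1:=eg$, where $d,e$ are the least positive integers clearing denominators. Since $f,g$ are monic, $f_1,g_1$ have leading coefficients $d,e$, and each is $\rG$-primitive because its content divides, and is coprime to, the corresponding integer, hence equals $1$. Then $de\cdot fg=f_1g_1\in\ZX$ is $\rG$-primitive by the content lemma, so $de\mid\rG(f_1g_1)=1$, whence $d=e=1$ and $f=f_1,g=g_1\in\ZX$. For item~2 the forward direction is trivial; conversely, if $g=fq$ in $\QX$ and $\rG(f)\mid\rG(g)$, I would write $f=\rG(f)f_1$, $g=\rG(g)g_1$, $q=(a/b)q_1$ with $f_1,g_1,q_1\in\ZX$ primitive, $b>0$, $\gcd(a,b)=1$; clearing denominators gives $b\,\rG(g)\,g_1=a\,\rG(f)\,f_1q_1$, and taking contents (using that $f_1q_1$ is primitive) gives $b\,\rG(g)=|a|\,\rG(f)$. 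Writing $\rG(g)=m\,\rG(f)$ yields $bm=|a|$, so $b\mid|a|$, and $\gcd(a,b)=1$ forces $b=1$, i.e.\ $q\in\ZX$.

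For item~3 I would exhibit the gcd explicitly. With $f=\rG(f)f_1$, $g=\rG(g)g_1$ as above, let $h\in\ZX$ be a primitive polynomial that is a gcd of $f_1$ and $g_1$ in the Bézout domain $\QX$, and put $D:=\pgcd(\rG(f),\rG(g))$; then $Dh$ is a gcd of $f,g$ in $\ZX$. Both required facts — that $Dh$ divides $f$ and $g$, and that every common divisor $k\in\ZX$ of $f,g$ divides $Dh$ — are checked through item~2, by splitting each divisibility into its content part (a divisibility among $\rG(f),\rG(g),D$ in $\ZZ$) and its $\QX$-part (where $\rG(f),\rG(g),D$ are units, so it reduces to divisibilities among $f_1,g_1,h$). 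This shows $\ZX$ is a gcd domain. For the irreducibility criterion (for $f$ of positive degree): if $f$ is irreducible in $\ZX$, then $\rG(f)$ must be a unit, so $f$ is primitive, and any factorization $f=PQ$ in $\QX$ with $\deg P,\deg Q\ge1$ rescales, via the content lemma, to a factorization of $f$ in $\ZX$ into non-units — impossible; conversely, a factorization $f=PQ$ in $\ZX$ with $\deg Q=0$ has $Q\mid\rG(f)=1$, so $Q$ is a unit. Item~4 is then the case $\rG(f)=\rG(g)=1$ of the formula, where $D=1$ and $\pgcd_{\ZX}(f,g)=h$, which equals $1$ exactly when $h=1$, i.e.\ exactly when $f,g$ are coprime in $\QX$.

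The only genuine obstacle is the content lemma; everything after it is bookkeeping that systematically separates content information (living in $\ZZ$) from divisibility information (living in $\QX$). Constructively nothing is lost: contents are computed by the Euclidean algorithm, the reduction $\ZX\to\Fp[X]$ is explicit, testing primitivity is decidable, and $\QX$ is effectively a Bézout domain.
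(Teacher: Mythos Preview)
The paper does not actually prove this proposition: in both the English and French versions it is \emph{recalled} as a classical preliminary fact (``On rappelle maintenant le rapport entre la \dve dans $\ZX$ et celle dans $\QX$''), with no proof supplied. So there is nothing to compare your argument against on the paper's side.

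Your proposal is correct and is exactly the standard route: Gauss's content lemma $\rG(fg)=\rG(f)\rG(g)$ proved by reduction modulo a prime, then each of the four items obtained by systematically separating the content part (in~$\ZZ$) from the $\QX$-part. The organisation---items~1 and~2 from the content lemma, item~3 by the explicit gcd $D\cdot h$ with $D=\pgcd(\rG(f),\rG(g))$ and $h$ a primitive representative of $\pgcd_{\QX}(f_1,g_1)$, and item~4 as the special case $D=1$---is the canonical one, and your closing remark that everything remains constructive over~$\ZZ$ is in keeping with the paper's spirit. One small wording slip: in item~1 you justify primitivity of $f_1=df$ by saying its content ``divides, and is coprime to,'' $d$; what you really use is that the content $c$ divides the leading coefficient~$d$, so $(d/c)f\in\ZX$, and minimality of $d$ forces $c=1$. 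Also, your parenthetical ``for $f$ of positive degree'' in the irreducibility criterion is needed, since a prime $p\in\ZZ$ is irreducible in $\ZX$ but is a unit in $\QX$; the paper's statement tacitly assumes this.
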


Rappelons qu'un corps est dit \textsl{discret} s'il possède un test à $0$, ce qui permet de traiter l'algèbre linéaire sur le corps de façon tout à fait explicite.

\begin{flemma} \label{flemma1}
Sur un corps discret $\gK$ arbitraire on a dans $\KX$  
\[(Y^m-1)\vi (Y^n-1)=Y^{ m\vi n}-1,
\]
pour $m,n\in\NN$.
\end{flemma}
Ceci est basé sur l'\algo d'Euclide et sur le fait que si $r$ est le reste de la division de $n\geq 2$ par $m\geq 2$, alors $X^r-1$ est le reste de la division de $X^n-1$ par $X^m-1$.

\begin{flemma}[lemme de substitution] \label{flemma2}~\\ 
 Soit $\gK$ un corps et des \pols unitaires non constants $f_1,\dots,f_\ell,h\in\KX$. On note $g$ le pgcd (unitaire) des $f_i$ et $f$ leur ppcm (unitaire). On note $F_i(X)=f_i(h(X))$, $G(X)=g(h(X))$ et $F(X)=f(h(X))$. Alors $G$ est le pgcd des $F_i$ et $F$ est leur ppcm.
\end{flemma}
%
\begin{proof} La substitution de $h(X)$ à $X$ est un homomorphisme de \Klgs, de $\KX$ dans lui-même: c'est un morphisme  d'évaluation. Il s'agit ici de voir que c'est \egmt un morphisme pour les lois de pgcd et ppcm. Cela se vérifierait pour tout morphisme entre anneaux de Bézout intègres, mais avec l'ambigüité due au fait que le pgcd et le pgcm ne sont bien définis qu'à association près. Dans le cas présent, on bénéficie du fait que tout \pol non nul est associé à un unique \pol unitaire.\\  
Vu l'associativité du pgcd et du ppcm, il suffit de traiter le cas où $\ell=2$. \\
Dans les relations de Bézout qui certifient que $g$ est le pgcd de $(f_1,f_2)$, on substitue $h(X)$~à~$X$ et on obtient les relations de Bézout qui certifient que $G$ est le pgcd de $(F_1,F_2)$.
\\
Pour le ppcm on substitue $h(X)$ à $X$ dans l'\egt $g(X)f(X)=f_1(X)f_2(X)$.
\end{proof}

\begin{flemma}[racines et \dcn] \label{flemma3} 
 Soit $\gA$ un anneau et un \pol unitaire $f\in \AY$ de degré $n$. Soient $x_1,\dots,x_m$  des zéros de $f$ dans $\gA$ avec les $x_i-x_j$ \ivs dans $\gA$ pour $i\neq j$. Alors on a:  
\begin{enumerate}
\item $f$ est divisible par $\prod_{i=1}^m(Y-x_i)$.
\item Si $m=n$, $f=\prod_{i=1}^n(Y-x_i)$.
\item Si $m>n$, l'anneau est trivial ($1=_{\gA}0$).
\end{enumerate}
\end{flemma}
\facile

\section{La construction des \pols cyclotomiques}\label{fmain}

\begin{fdefinota} \label{fdefiPolcyc}
Pour chaque entier $n>0$ on définit les \pols $\Phi_n(Y)$ et $\Psi_n(Y)\in\QQY$ comme suit.

\begin{itemize}
\item Pour $n=1$, $\Psi_1(Y)= 1$ et $\Phi_1(Y)=Y-1$.
\item Pour $n\geq 2$, $\Psi_n(Y)=\ppcm_{d:0<d<n,d\mid n}(Y^d-1)$ et $\Phi_n(Y)=(Y^n-1)/\Psi_n(Y)$.
\end{itemize}
On note aussi $\Gamma_n(Y)=(Y^n-1)/(Y-1)$, et donc $\Psi_n(Y)=(Y-1)\ppcm_{d:1<d<n,d\mid n}\Gamma_d(Y)$.
Enfin on note $\Qn:=\aqo{\Q[Y]}{\Phi_n(Y)}$. 
\end{fdefinota}
Remarquez qu'il n'est pas affirmé pour le moment que $\Qn$ est un corps.

\smallskip Notez que puisque chaque $Y^d-1$ divise $Y^n-1$, il est est de même pour leur ppcm et le quotient $\Phi_n$ est un \elt bien défini dans $\QQY$. On a pris pour ppcm le \pol unitaire correspondant, donc le quotient est également unitaire.

\smallskip 
Par exemple nous avons
\begin{itemize}
\item $\Phi_2(Y)=Y+1$, $\Phi_3(Y)=Y^2+Y+1$,
\item $\Psi_4(Y)=Y^2-1$, $\Phi_4(Y)=Y^2+1$, 
\item $\Psi_8(Y)=Y^4-1$, $\Phi_8(Y)=Y^4+1$, 
\item pour $p\in\PP$, $\Psi_p=\Phi_1$ donc $\Phi_1\Phi_p=Y^p-1$, et $\Phi_p=\Gamma_p=\som_{r=0}^{p-1}Y^{r}$.
\end{itemize}

\begin{flemma} \label{flemPolcyc1}
Pour chaque $n$, le \pol unitaire $\Phi_n(Y)$ est à \coes dans $\ZZ$. 
\end{flemma}
%
\begin{proof}
Point \textsl{1} de la proposition \ref{fpropZXQX}. 
\end{proof}
%
%

\begin{flemma} \label{flemPolcyc2}
On a  
 $\Psi_n(Y)=\ppcm_{p:p\in\PP,p\mid n}(Y^{n/p}-1)$.
\end{flemma}
%
\begin{proof}
Si $d$ divise strictement $n$ il divise  l'un des $n/p$ pour un $p\in\PP$ qui divise~$n$. Donc $Y^d-1$ divise $Y^{n/p}-1$ pour ce nombre premier $p.$ 
\end{proof}

Les deux \corls qui suivent sont surpassés par les résultats de la section suivante.

\begin{fcorollary} \label{fcoremPolcyc2}
Si $p\in\PP$ divise $n$, on a $\Psi_{np}(Y)=\Psi_{n}(Y^p)$ et $\Phi_{np}(Y)=\Phi_{n}(Y^p)$\footnote{Pour les degrés cela correspond à $\phi(np)=p\phi(n)$.}. 
\end{fcorollary}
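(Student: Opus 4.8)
The plan is to derive both equalities from the substitution lemma \ref{flemma2}, using the prime--divisor description of $\Psi_n$ supplied by Lemma \ref{flemPolcyc2}.

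First I would record the elementary arithmetic fact that, since $p$ is a prime dividing $n$, a prime $q$ divides $np$ if and only if it divides $n$. Plugging this into Lemma \ref{flemPolcyc2} applied to $np$ gives
\[
\Psi_{np}(Y)=\ppcm_{q:q\in\PP,\,q\mid n}\bigl(Y^{p\,(n/q)}-1\bigr),
\]
and since $Y^{p\,(n/q)}-1=(Y^p)^{\,n/q}-1$, each polynomial occurring in this $\ppcm$ is the image of the corresponding factor $Y^{n/q}-1$ of $\Psi_n(Y)=\ppcm_{q:q\in\PP,\,q\mid n}(Y^{n/q}-1)$ under the substitution $Y\mapsto Y^p$.

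Next I would apply Lemma \ref{flemma2} with $h(Y)=Y^p$ and the family of the $Y^{n/q}-1$ (these are non-constant monic polynomials, as the lemma requires, since each $n/q\ge 1$, and there is at least one such $q$ because $n\ge p\ge 2$). The lemma states that the monic lcm of the substituted polynomials equals the substitution applied to the monic lcm of the originals, i.e. $\Psi_{np}(Y)=\Psi_n(Y^p)$. For the second assertion, the evaluation $Y\mapsto Y^p$ is a $\QQ$-algebra endomorphism of $\QQY$, hence commutes with the exact division defining $\Phi$; applying it to $\Phi_n=(Y^n-1)/\Psi_n$ yields $\Phi_n(Y^p)=\bigl((Y^p)^n-1\bigr)/\Psi_n(Y^p)=(Y^{np}-1)/\Psi_{np}(Y)=\Phi_{np}(Y)$.

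I do not expect a real obstacle here: the only points to keep straight are the set identity $\{q\in\PP:q\mid np\}=\{q\in\PP:q\mid n\}$ (which is exactly where the hypothesis $p\mid n$ enters) and the trivial check that the inputs to Lemma \ref{flemma2} are monic and non-constant. As a by-product, comparing degrees in $\Phi_{np}(Y)=\Phi_n(Y^p)$ gives the relation $\phi(np)=p\,\phi(n)$ mentioned in the footnote.
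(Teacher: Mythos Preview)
Your proof is correct and follows essentially the same approach as the paper: both use the identity $\{q\in\PP:q\mid np\}=\{q\in\PP:q\mid n\}$ together with Lemma~\ref{flemPolcyc2} to rewrite $\Psi_{np}$, then invoke the substitution lemma~\ref{flemma2} with $h(Y)=Y^p$ for the lcm equality, and finally transport the exact division defining $\Phi_n$ through the substitution. Your version is slightly more explicit about checking the hypotheses of Lemma~\ref{flemma2}, but the argument is the same.
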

%
\begin{proof}
 Un nombre premier $q$ divise $np$ \ssi il divise $n$, donc  
 \[\Psi_{np}(Y)=\ppcm_{q:q\in\PP,q\mid n}(Y^{p {n/q}}-1)
 \]
qui est obtenu en substituant $Y^p$ à $Y$ dans dans $\Psi_{n}(Y)=\ppcm_{q:q\in\PP,q\mid n}(Y^{n/q}-1)$. 
Par le lemme de substitution on~a $\Psi_{np}(Y)=\Psi_{n}(Y^p)$. Par suite, $\Phi_{np}(Y)$ est obtenu en substituant~$Y^p$ à~$Y$ dans $(Y^n-1)/\Psi_{n}(Y)$. 
\end{proof}
%
\begin{fcorollary} \label{fcorcoremPolcyc2}
Si tout facteur premier de $r$ divise $m$, on a $\Phi_{mr}(Y)=\Phi_{m}(Y^r)$. 
\end{fcorollary}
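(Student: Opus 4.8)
The plan is to deduce this from the previous corollary (\ref{fcoremPolcyc2}) by induction on the number $k$ of prime factors of $r$, counted with multiplicity. If $k=0$ then $r=1$ and the claim is the tautology $\Phi_{m}(Y)=\Phi_{m}(Y^{1})$; if $k=1$ then $r=p$ is a prime number dividing $m$, and the equality $\Phi_{mp}(Y)=\Phi_{m}(Y^{p})$ is exactly the previous corollary.

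For the inductive step I would write $r=p\,r'$, where $p\in\PP$ divides $m$ and $r'$ is a product of $k-1$ prime numbers, each of which divides $m$. First, applying the induction hypothesis to $r'$ — whose prime factors all divide $m$ — gives $\Phi_{mr'}(Y)=\Phi_{m}(Y^{r'})$; since $\Phi_{mr'}$ has already been shown to be a genuine (monic) polynomial, I may substitute $Y^{p}$ for $Y$ in this identity, obtaining $\Phi_{mr'}(Y^{p})=\Phi_{m}(Y^{pr'})=\Phi_{m}(Y^{r})$. Next, because $p$ divides $m$ and hence divides $mr'$, the previous corollary applied with $mr'$ in place of $n$ yields $\Phi_{(mr')p}(Y)=\Phi_{mr'}(Y^{p})$. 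Since $(mr')p=mr$, chaining the two equalities gives $\Phi_{mr}(Y)=\Phi_{m}(Y^{r})$, which closes the induction.

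I do not expect a genuine obstacle here: the argument is essentially a bookkeeping induction sitting on top of \ref{fcoremPolcyc2}. The two points that need a little care are that the hypothesis ``every prime factor of the exponent divides the base'' is preserved along the induction (the base $m$ stays fixed while the exponent is peeled off one prime at a time), and that the substitution $Y\mapsto Y^{p}$ is performed on the polynomial identity for $\Phi_{mr'}$ rather than on the a priori merely rational expression $(Y^{mr'}-1)/\Psi_{mr'}(Y)$ — which is legitimate precisely because each $\Phi$ involved is, by \ref{flemPolcyc1}, an honest (indeed integer, monic) polynomial.
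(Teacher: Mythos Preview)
Your proof is correct and is exactly the argument the paper has in mind: the paper's own proof is the single sentence ``Résulte du corolaire précédent en utilisant la décomposition de $r$ en facteurs premiers,'' and your induction on the number of prime factors of $r$, peeling off one prime at a time via Corollary~\ref{fcoremPolcyc2}, is precisely the intended unfolding of that sentence.
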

%
\begin{proof} Résulte du \corl précédent en utilisant la \dcn de $r$ en facteurs premiers.
\end{proof}

Le calcul de $\Phi_n$ est ainsi ramené au cas où $n$ est un entier sans facteur carré, produit de nombres premiers deux à deux distincts. Par exemple avec $360=12\times 30$, $\Phi_{360}(Y)=\Phi_{180}(Y^2)=\Phi_{90}(Y^4)=\Phi_{30}(Y^{12})$.

\section{Propriétés décisives}

\begin{flemma} \label{flemcrucial} On est dans l'anneau $\ZY$. Soient $d,e\geq 2$.
\begin{enumerate}
\item Si $d \divi e$, alors $\Phi_d\vi(Y^e-1)=\Phi_d$.
\item Si $d \nedivi e$, alors $\Phi_d\vi(Y^e-1)=1$.
\item Si $d\neq e$, alors $\Phi_d\vi\Phi_e=1$.
\end{enumerate}
\textsl{1}. Si $d\divi e$, alors $\Phi_d$ divise $Y^d-1$ qui divise $Y^e-1$.

\smallskip \noindent  \textsl{2}. Si $d\nedivi e$, $d\vi e$ est un diviseur strict de $d$.\\
Or $\Phi_d\vi (X^e-1)$ divise $(X^d-1)\vi(X^e-1)=X^{d\vi e}-1$, qui divise $\Psi_d$ puisque $d\vi e$ est un diviseur strict de $d$. 
Donc $\Phi_d\vi (X^e-1)$, qui divise à la fois $\Phi_d$ et $\Psi_d$, est égal à $1$.

\smallskip \noindent \textsl{3}. L'un des deux ne divise pas l'autre. Par exemple
$d\nedivi e$. Alors, d'après le point \textsl{2} on~a~$\Phi_d\vi(Y^e-1)=1$. Comme $\Phi_e\divi Y^e-1$, on obtient $\Phi_d\vi\Phi_e=1$.

\end{flemma}

\begin{ftheorem} \label{fMainth1}
Pour tout entier $n\geq 1$, Le \pol $Y^n-1$ se décompose dans $\Z[Y]$ sous la forme du produit de \pols deux à deux étrangers  
\[Y^n-1=\prod\nolimits_{d:d\mid n}\Phi_d(Y). \eqno{(1_n)}
\]
En particulier $\deg(\Phi_n)=\phi(n)$  (par \recu, car $n=\som\limits_{d:d\mid n}\phi(d)$).
\end{ftheorem}
%
\begin{proof} On fait une \demo par \recu. Initialisation avec $n=1$, ou si l'on préfère, $n$ un nombre premier. On suppose la \prt vraie pour tous les $d$ qui divisent strictement $n$. On remarque alors que $\Psi_n$ est le ppcm de tous les $\Phi_d$ pour les~$d$ qui divisent strictement $n$. En effet, par l'\hdr, chaque $Y^d-1$ est le produit des~$\Phi_h$ pour les $h$ qui divisent $d$ et c'est aussi leur ppcm car $Y^d-1$ est \spl.
On a donc
\[
Y^n-1=\Psi_n(Y)\,\Phi_n(Y)=
\bigg(\ppcm\limits_{d:d\mid n,1\leq d<n}(Y^d-1)\bigg)\,\Phi_n(Y)=
\bigg(\ppcm\limits_{d:d\mid n,1\leq d<n}\Phi_d(Y)\bigg)
\,\Phi_n(Y). 
\]
Dans le premier facteur du dernier produit, les $\Phi_d(Y)$ sont deux à deux étrangers d'après le lemme \ref{flemcrucial}, donc leur ppcm est égal à leur produit. Comme le \pol $Y^n-1$ est \spl, $\Phi_n$ est aussi étranger aux $\Phi_d(Y)$ pour $d$ diviseur strict de $n$.
\end{proof}

Sans conteste, l'un des résultats les plus fameux est l'\irdt de $\Phi_n$; voir différentes \demos dans \cite{fWei}. Nous en donnons une nouvelle.

%
\begin{ftheorem} \label{fMainth2}
Pour tout entier $n\geq 1$, le \pol $\Phi_n$ est \ird dans $\QQY$.
\end{ftheorem}

%
\begin{proof}[Mise en route] Le résultat est clair pour $n=1$. \\
Soit $n\geq 2$ et $f$ un \pol unitaire de degré $\geq 1$ qui divise $\Phi_n$ dans $\QQY$. On sait que $f\in\ZZY$, on doit montrer qu'il est égal~à~$\Phi_n$. Il suffit de démontrer que $\deg(f)\geq\phi(n)$.
On considère la \QQlg $\QQ_f=\aqo\QQY{f}$. On note $y$ l'\elt $Y$ vu modulo $f$ de sorte que $\QQ_f=\QQy$. On démontre d'abord deux lemmes.
\end{proof}
%
\begin{flemma} \label{flem1Mainth2}
On a $Y^n-1=\prod_{0\leq r<n}(Y-y^r)$ dans $\QQ_f[Y]$. 
\end{flemma}
%
\begin{proof} Comme $f$ divise $Y^n-1$, on a $y^n=1$. Donc les $y^r$ sont des zéros de $Y^n-1$. Pour conclure, d'après le fait \ref{flemma3}, il suffit de voir que les $y^r-y^s$ pour $r>s$ sont \ivs dans $\QQy$. Écrivons $r=s+t$ et $y^r-y^s=y^s(y^t-1)$. 
Comme $n$ ne divise pas~$t$ et $f$ divise $\Phi_n$ on obtient $(Y^t-1)\vi f(Y)=1$ d'après le point \textsl{2} du lemme \ref{flemcrucial}.\\
La relation de Bézout dans $\QQ_f[Y]$ pour $(Y^t-1)\vi f(Y)=1$, spécialisée en $y$, montre \hbox{que $y^t-1$} est \iv dans $\QQ_f$.  
\end{proof}
%

\begin{flemma} \label{flem2Mainth2}
Si $r\vi n=1$ et $n\geq 2$, alors  $f(Y)$ divise $f(Y^r)$  dans $\QQY$ (ou, ce qui revient au même, dans $\ZZY$).
\end{flemma}
%
\begin{proof} Notez que si $f(Y)$ divise $f(Y^r)$, alors, par substitution 
$f(Y^k)$ divise $f(Y^{kr})$ pour tout $k$.
Il suffit donc de démontrer le résultat dans le cas où $r=p$, un nombre premier qui ne divise pas $n$.\\
Ecrivons $Y^n-1 = f(Y)g(Y)$ dans $\ZY$ donc $Y^{np}-1 = f(Y^p)g(Y^p)$. En utilisant $(Y^n-1) \mid (Y^{np}-1)$, il vient :
$f(Y) \mid f(Y^p)g(Y^p)$.
Passons dans $\Fp[Y]$. Puisque $p\nedivi n$, on a $\ov f(Y)\vi \ov g(Y) = \ov1$ donc $\ov f(Y)\vi \ov g(Y)^p = \ov1$. Comme $\ov g(Y)^p = \ov g(Y^p)$, on obtient $\ov f(Y) \vi \ov g(Y^p) = \ov 1$, et à fortiori $f(Y) \vi g(Y^p) = 1$ dans $\ZZY$. Cela implique $f(Y) \mid f(Y^p)$.
\end{proof}
\begin{proof}[Fin de la \demo du \thref{fMainth2}] D'après le lemme précédent, en spécialisant $Y$ en $y$ on obtient que  $f(y^r)=0$ pour $r<n$ étranger à $n$,
or les $y^r-y^{r'}$ sont \ivs dans~$\QQ_f$ (déjà démontré).  
Donc $f(Y)$ est multiple de $\prod_{r:1\leq r<n,r\vi n=1}(Y-y^r)$ d'après le fait \ref{flemma3}. Ainsi $\deg(f)\geq \phi(n)$.
En conclusion, $f=\Phi_n$, $\Qn=\QQ_f$ et
\[\Phi_n(Y)=f(Y)=\prod\nolimits_{r:1\leq r<n,r\vi n=1}(Y-y^r).
\] 
\end{proof}

\noindent \textsl{Remarque}. Précisions.
\begin{itemize}
\item La même \demo que celle du lemme \ref{flem1Mainth2} donne $Y^n-1=\prod_{0\leq r<n}(Y-z^r)$ dans $\Qn[Y]$.
\item La  \demo du lemme \ref{flem2Mainth2} dit en fait plus \prmt que si un \pol unitaire $F\in \ZZY$ divise $Y^n-1$ et si $r\vi n = 1$, alors $F(Y)$ divise $F(Y^r)$  dans $\QQY$. \eoe
\end{itemize}

\smallskip Le corolaire qui suit établit l'équivalence entre notre \dfn et la \dfn classique des \pols \cycts.

\begin{fcorollary} \label{fcorMainths0}
Soit $\gK$ une extension de $\QQ$ qui contient une racine primitive de l'unité d'ordre $n$, que nous notons $\xi$. 
Alors le sous-corps $\QQ[\xi]$ est isomorphe à $\Qn$.
Un tel isomorphisme $\varphi\colon \QQ[\xi] \to \Qn$ est déterminé par l'image de $\xi$ qui est n'importe quelle racine primitive $n$-ème de l'unité dans $\Qn$. 
\end{fcorollary}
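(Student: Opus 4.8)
Le plan est d'établir que $\Phi_n$ est le \pol minimal de $\xi$ sur $\QQ$; l'\iso annoncé en résultera par passage au quotient, et la dernière assertion s'en déduira presque formellement.

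\emph{Première étape}: vérifier que $\Phi_n(\xi)=0$. Dans le corps $\gK$, $\xi$ satisfait $\xi^n=1$ et $\xi^d\neq 1$ pour tout diviseur strict $d$ de $n$, donc chaque $\xi^d-1$ ($d\mid n$, $d<n$) est \iv. Comme $\Phi_d$ divise $Y^d-1$ dans $\ZY$, l'\elt $\Phi_d(\xi)$ divise $\xi^d-1$ et est donc \iv dans $\gK$. En évaluant en $\xi$ l'\egt $(1_n)$ du \thref{fMainth1}, soit $Y^n-1=\prod_{d\mid n}\Phi_d(Y)$, on obtient $0=\Phi_n(\xi)\cdot\prod_{d\mid n,\;d<n}\Phi_d(\xi)$; le dernier facteur étant \iv, il vient $\Phi_n(\xi)=0$.

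\emph{Deuxième étape}: conclure. Par le \thref{fMainth2}, $\Phi_n$ est unitaire et \ird dans $\QQY$; comme il annule $\xi$, c'est le \pol minimal de $\xi$ sur $\QQ$. Le morphisme d'évaluation $\QQ[Y]\to\gK$, $Y\mapsto\xi$, a donc pour noyau $\gen{\Phi_n}$ et pour image $\QQ[\xi]$, d'où un \iso de \QQlgs $\Qn=\QQ[Y]/\gen{\Phi_n}\;\simarrow\;\QQ[\xi]$; en particulier $\QQ[\xi]$ est un corps de dimension $\phi(n)$ sur $\QQ$. Pour la dernière phrase: un \iso $\varphi\colon\QQ[\xi]\to\Qn$ de \QQlgs est déterminé par la seule valeur $\varphi(\xi)$, puisque $\xi$ engendre $\QQ[\xi]$ comme \QQlg, et $\varphi(\xi)$ est forcément un zéro de $\Phi_n$ dans $\Qn$, c'est-à-dire une racine primitive $n$-ème de l'unité (les zéros de $\Phi_n$ dans $\Qn$ sont exactement les $\phi(n)$ racines primitives $n$-èmes de l'unité de $\Qn$, d'après le \thref{fMainth2} et la Remarque qui le suit). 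Réciproquement, toute racine primitive $n$-ème $\eta$ de $\Qn$ annule $\Phi_n$ (même argument que la première étape, appliqué dans $\Qn$), donc induit un morphisme de \QQlgs $\QQ[\xi]\to\Qn$, $\xi\mapsto\eta$; il est injectif ($\QQ[\xi]$ est un corps) et surjectif ($\QQ[\eta]$ a déjà dimension $\phi(n)=\dim_\QQ\Qn$), donc c'est un \iso.

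L'obstacle principal — somme toute modeste — est la description explicite des racines primitives $n$-èmes de l'unité de $\Qn$ comme l'ensemble des $\phi(n)$ zéros de $\Phi_n$ dans $\Qn$: soit on l'extrait de la Remarque suivant le \thref{fMainth2} (qui fournit $Y^n-1=\prod_{0\le r<n}(Y-z^r)$ dans $\Qn[Y]$ et $\Phi_n(Y)=\prod_{r:1\leq r<n,\;r\vi n=1}(Y-z^r)$, $z$ étant $Y$ modulo $\Phi_n$), soit on la redémontre à la main via le \thref{fMainth1} et le point \textsl{2} du lemme \ref{flemcrucial}. Tout le reste n'est que l'usage standard du lemme d'évaluation et de l'irréductibilité de $\Phi_n$.
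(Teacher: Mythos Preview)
Your proof is correct and follows essentially the same route as the paper: show $\Phi_d(\xi)$ is \iv for each strict divisor $d\mid n$, use the factorisation $(1_n)$ to deduce $\Phi_n(\xi)=0$, then invoke the irréductibilité (\thref{fMainth2}) to conclude that $\Phi_n$ is the \polmin of $\xi$. You go further than the paper in spelling out the last sentence of the statement (the paper leaves the bijection between \isos and primitive $n$-th roots in $\Qn$ implicit), and your treatment of that point is sound.
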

%
\begin{proof} En tant que racine primitive $n$-ème de l'unité, $\xi$ annule $X^n-1$ mais n'annule aucun $X^d-1$ si $d$ divise strictement $n$.
En particulier $\Phi_d(\xi)$ est \iv dans~$\gK$. L'\egt $\xi^n-1=\prod_{d \mid n}\Phi_d(\xi)$ montre alors que $\Phi_n(\xi)=0$. Comme $\Phi_n(Y)$ est \ird sur~$\QQ$, c'est le \polmin de $\xi$ au dessus de $\QQ$. 
\end{proof}
%

\begin{fcorollary} \label{fcorMainths}
Si $n$ est pair, le groupe des racines de l'unité de $\Qn$ est d'ordre $n$, engendré par $x_n$. Si $n$ est impair, le groupe des racines de l'unité de $\Qn$ est d'ordre $2n$, engendré par $-x_n$. 
\end{fcorollary}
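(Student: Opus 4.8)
The plan is to determine the group $\mu$ of roots of unity of $\Qn$ from three ingredients: the element $x_n$ (the class of $Y$ modulo $\Phi_n$) is a root of unity of order exactly $n$, which follows from the factorisations of $\Phi_n$ and of $Y^n-1$ obtained in the proof of Theorem~\ref{fMainth2} together with Lemma~\ref{flemcrucial}; a cyclic group of roots of unity of order $m$ inside a field generates a subfield isomorphic to $\Q_m$, by Corollary~\ref{fcorMainths0}; and Euler's function $\phi$ grows along divisibility chains of even integers. First I would observe that $\mu$ is finite, hence cyclic: any $\zeta\in\mu$ of order $m$ generates $\QQ[\zeta]\cong\Q_m$ inside $\Qn$, so $\phi(m)=[\QQ[\zeta]:\QQ]\le[\Qn:\QQ]=\phi(n)$; since $\phi(m)\to\infty$, the element orders are bounded by an explicit $M$, so $\mu$ is contained in the finite zero set of $\prod_{1\le m\le M}(Y^m-1)$, and a finite subgroup of the multiplicative group of a field is cyclic. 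Let $N$ be its order; then $n\mid N$ because $x_n\in\mu$ has order $n$.

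Suppose first that $n$ is even, and write $N=nk$. A generator of $\mu$ has order $nk$, hence generates a subfield of $\Qn$ isomorphic to $\Q_{nk}$ by Corollary~\ref{fcorMainths0}, which forces $\phi(nk)\le\phi(n)$. The arithmetic point I would use is that, $n$ being even, $\phi(nk)>\phi(n)$ as soon as $k>1$: running through the prime factorisation of $k$ and appending one prime $p$ at a time, each step multiplies the current value of $\phi$ by $p$ or by $p-1$, a strict increase except when $p=2$ is appended to an odd integer, which cannot occur along a chain of multiples of the even number $n$. Hence $k=1$, $\mu$ has order $n$, and $\mu=\langle x_n\rangle$. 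This little monotonicity lemma for $\phi$ is the main, and essentially the only, obstacle; everything else is bookkeeping.

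Now suppose $n$ is odd. Then $-x_n\in\Qn$ has order $2n$: its square $x_n^2$ still has order $n$ since $n$ is odd, and $(-x_n)^n=-1\neq1$. By Corollary~\ref{fcorMainths0}, $\QQ[-x_n]\cong\Q_{2n}$, a subfield of $\Qn$ of degree $\phi(2n)=\phi(n)=[\Qn:\QQ]$ over $\QQ$; therefore $\Qn=\QQ[-x_n]\cong\Q_{2n}$. Applying the already-proved even case to $\Q_{2n}$, the group of roots of unity of $\Qn$ has order $2n$, and since $-x_n$ has order $2n$ it must generate that group, which would finish the proof.
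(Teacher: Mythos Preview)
Your argument is correct and follows essentially the same route as the paper: for $n$ even, a hypothetical root of unity of order $nk$ with $k>1$ would generate, via Corollary~\ref{fcorMainths0}, a subfield isomorphic to $\Q_{nk}$ of degree $\phi(nk)>\phi(n)$, which is impossible; for $n$ odd, one observes that $-x_n$ has order $2n$, identifies $\Qn$ with $\Q_{2n}$, and invokes the even case. You simply make explicit several points the paper leaves tacit (finiteness and cyclicity of $\mu$, the exact order of $x_n$, and the monotonicity of $\phi$ along even divisibility chains).
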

%
\begin{proof}
Supposons $n$ pair. Si le groupe des racines de l'unité était strictement plus grand, il contiendrait un $y$ d'ordre $nk$ pour $k>1$. Cet $y$ engendrerait un sous-corps isomorphe à $\Q_{nk}$ (\corl \ref{fcorMainths0}) de dimension $\phi(nk)$. Or puisque $n$ est pair $\phi(nk)>\phi(n)$.\\
Si $n$ est impair, $-x_n$ est d'ordre $2n$ dans $\Qn$, donc $\Qn=\QQ_{2n}$ et le groupe des racines de l'unité ne peut pas être d'ordre $>2n$ d'après le cas $n$ pair. 
\end{proof}

%

\begin{fcorollary} \label{fcorMainths3}
Soit $n$ pair. Le corps $\Qn$ contient un sous-corps isomorphe à $\Q_d$ \ssi $d$ divise $n$.  
\end{fcorollary}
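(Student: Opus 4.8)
The plan is to reduce this to two facts already proved in this section. The ingredients are Corollary~\ref{fcorMainths0} — a field $\gK\supseteq\QQ$ containing a primitive $d$-th root of unity $\xi$ satisfies $\QQ[\xi]\cong\Q_d$ — and Corollary~\ref{fcorMainths}, which for $n$ even says the group of roots of unity of $\Qn$ is cyclic of order exactly $n$, a generator being the class $x_n$ of $Y$ modulo $\Phi_n$; recall also that $\Qn$ is a field by Theorem~\ref{fMainth2}. One more elementary observation will be used: for every $m\geq 1$ the class of $Y$ modulo $\Phi_m$ has multiplicative order exactly $m$. Indeed its $m$-th power is $1$ because $\Phi_m\mid Y^m-1$, while its $e$-th power is $\neq 1$ for every proper divisor $e$ of $m$, since $Y^e-1\mid\Psi_m$ and $\Phi_m\vi\Psi_m=1$, so $\Phi_m$ and $Y^e-1$ are coprime.

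Then I would prove the implication ``$d\mid n$ implies $\Qn$ contains a subfield isomorphic to $\Q_d$''. Given $d\mid n$, put $\xi:=x_n^{n/d}\in\Qn$. Then $\xi^d=x_n^n=1$, and for every proper divisor $e$ of $d$ one has $\xi^e=x_n^{en/d}\neq 1$ because $x_n$ has order $n$ and $n$ does not divide $en/d$. So $\xi$ is a primitive $d$-th root of unity in the field $\Qn$, and Corollary~\ref{fcorMainths0}, applied with $\gK=\Qn$ and the integer $d$, yields $\QQ[\xi]\cong\Q_d$; and $\QQ[\xi]$ is a subfield of $\Qn$. No split according to the parity of $d$ is needed here.

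For the converse, suppose $\Qn$ has a subfield $\gL$ with an isomorphism $\psi\colon\Q_d\to\gL$. The case $d=1$ is trivial. For $d\geq 2$, the class of $Y$ modulo $\Phi_d$ has order exactly $d$ by the observation above, hence its image under $\psi$ is an element of order exactly $d$ of the group of roots of unity of $\Qn$. That group is cyclic of order $n$ by Corollary~\ref{fcorMainths}, and a cyclic group of order $n$ has an element of order $d$ only when $d\mid n$; therefore $d\mid n$.

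I do not expect a real obstacle: granting Corollaries~\ref{fcorMainths0} and \ref{fcorMainths}, this is bookkeeping. The single delicate point is the claim that the relevant elements — $x_n$ in one direction, the class of $Y$ modulo $\Phi_d$ in the other — genuinely have order $n$, respectively $d$, and not a proper divisor of it; that is exactly where the coprimality of $\Phi_m$ with $Y^e-1$ for proper divisors $e$ of $m$ (ultimately the separability of $Y^m-1$) is invoked. It is also worth noting that, unlike the neighbouring corollaries, this one needs no even/odd case analysis: the evenness of $n$ is used only through Corollary~\ref{fcorMainths}, i.e.\ only to know that the group of roots of unity of $\Qn$ has order $n$ rather than $2n$.
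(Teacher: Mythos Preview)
The paper states this corollary without proof, leaving it as an immediate consequence of Corollaries~\ref{fcorMainths0} and~\ref{fcorMainths}. Your argument is correct and is exactly the natural derivation one would expect: produce a primitive $d$-th root of unity as $x_n^{n/d}$ for the forward direction, and in the converse transport $x_d$ through the isomorphism to obtain an element of order $d$ in the order-$n$ group of roots of unity of $\Qn$.
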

\begin{fcorollary} \label{fcorMainths4}
Si $n$ est impair $\geq 3$, $\Phi_{2n}(Y)= \Phi_{n}(-Y)$. 
\end{fcorollary}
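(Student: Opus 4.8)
\medskip\noindent\textit{Esquisse de \demo proposée.}
Le plan est de déduire ce \corl des \corls \ref{fcorMainths} et \ref{fcorMainths0}, sans manipuler les ppcm. Notons $x_n$ la classe de $Y$ dans $\Qn$, qui est un corps d'après le \thref{fMainth2}, de sorte que $\Phi_n(x_n)=0$. Comme $n$ est impair et $\geq 3$, l'entier $\phi(n)$ est pair et $\phi(2n)=\phi(2)\phi(n)=\phi(n)$; le \pol $\Phi_n(-Y)$ est alors unitaire, à \coes dans $\ZZ$, de degré $\phi(n)=\phi(2n)=\deg(\Phi_{2n})$.

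On procède comme suit. Puisque $n$ est impair, le \corl \ref{fcorMainths} dit que $-x_n$ engendre le groupe des racines de l'unité de $\Qn$, qui est d'ordre $2n$; en particulier $-x_n$ est une racine primitive $2n$-ème de l'unité dans $\Qn$. En appliquant le \corl \ref{fcorMainths0} à l'extension $\gK=\Qn$ de $\QQ$ et à cette racine $\xi=-x_n$ (avec $2n$ au lieu de $n$), on obtient que $\Phi_{2n}$ est le \polmin de $-x_n$ au dessus de $\QQ$. Or $\Phi_n\big(-(-x_n)\big)=\Phi_n(x_n)=0$, donc $-x_n$ est racine du \pol $\Phi_n(-Y)\in\QQ[Y]$, ce qui force $\Phi_{2n}$ à diviser $\Phi_n(-Y)$ dans $\QQ[Y]$. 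Ces deux \pols étant unitaires et de même degré $\phi(2n)$, on conclut que $\Phi_{2n}(Y)=\Phi_n(-Y)$.

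À mon sens il n'y a pas d'obstacle sérieux ici : l'essentiel a déjà été fait dans les \corls \ref{fcorMainths0} et \ref{fcorMainths}. Le seul point à surveiller est la parité de $\phi(n)$ pour $n\geq 3$, qui assure à la fois que $\Phi_n(-Y)$ est unitaire et l'égalité des degrés. Une variante qui semblerait plus \elr — partir de $Y^{2n}-1=(Y^n-1)(Y^n+1)$, facteurs étrangers dans $\QQ[Y]$, puis relier $Y^n+1$ à $\Phi_n(-Y)$ — bute sur le fait que la substitution $Y\mapsto -Y$ n'est pas unitaire, d'où des ajustements de normalisation et de signes ; l'argument ci-dessus me paraît plus direct.
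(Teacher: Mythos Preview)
Ta \demo est correcte et suit exactement la voie suggérée par l'article : le \corl \ref{fcorMainths4} y est énoncé sans \demo, comme conséquence \imde des \corls \ref{fcorMainths0} et \ref{fcorMainths}, et c'est précisément ce que tu fais en identifiant $\Phi_{2n}$ au \polmin de $-x_n$ puis en comparant les degrés. Rien à redire.
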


\medskip \noindent 
{\bf Conclusion.} L'approche \gui{intrinsèque} que nous proposons nous semble plus élémentaire que l'approche usuelle plus savante qui utilise l'existence d'un corps de racines pour le polynôme $X^n-1$. Cependant, bien qu'il soit assez facile de démontrer que tout groupe de racines de l'unité dans un corps est nécessairement cyclique, nous regrettons de ne pas avoir réussi à exploiter ce fait pour simplifier de manière radicale la démonstration de l'irréductibilité de $\Phi_n$. 

\bibliographystyle{plainnat-fr}

\end{document}